\documentclass[11pt,fleqn,a4paper]{article} 

\usepackage{amsmath,amssymb,amsthm,amsfonts} 
\usepackage[mathscr]{eucal}

\usepackage{hyperref}
\hypersetup{colorlinks, linkcolor=blue, citecolor=blue, urlcolor=blue}

\flushbottom
\frenchspacing
\allowdisplaybreaks

\makeatletter
\long\def\@makecaption#1#2{%
  \vskip\abovecaptionskip\footnotesize
  \sbox\@tempboxa{#1. #2}%
  \ifdim \wd\@tempboxa >\hsize
    #1. #2\par
  \else
    \global \@minipagefalse
    \hb@xt@\hsize{\hfil\box\@tempboxa\hfil}%
  \fi
  \vskip\belowcaptionskip}
\makeatother

\setlength{\textwidth}{160.0mm}
\setlength{\textheight}{245.0mm}
\setlength{\oddsidemargin}{0mm}
\setlength{\evensidemargin}{0mm}
\setlength{\topmargin}{-15mm}
\setlength{\parindent}{5.0mm}

\newcommand{\p}{\partial}
\newcommand{\sgn}{\mathop{\rm sgn}\nolimits}

\newlength{\mylength}
\settowidth{\mylength}{$\bullet\quad{}$}
\newcommand{\solution}{\hspace*{-\mylength}\bullet\quad}

\newtheorem{theorem}{Theorem}
\newtheorem{lemma}[theorem]{Lemma}
\newtheorem{corollary}[theorem]{Corollary}
\newtheorem{proposition}[theorem]{Proposition}
{\theoremstyle{definition}

}

\newcommand{\todo}[1][\null]{\ensuremath{\clubsuit}}

\newcommand{\noprint}[1]{}
\newcommand{\lsemioplus}{\mathbin{\mbox{$\lefteqn{\hspace{.77ex}\rule{.4pt}{1.2ex}}{\in}$}}}

\begin{document}
\par\noindent {\LARGE\bf
Extended symmetry analysis of remarkable\\ (1+2)-dimensional Fokker--Planck equation
\par}

\vspace{5.5mm}\par\noindent{\large
\large Serhii D. Koval$^{\dag\ddag}$, Alexander Bihlo$^\dag$ and Roman O. Popovych$^{\S}$
}
	
\vspace{5.5mm}\par\noindent{\it\small
$^\dag$Department of Mathematics and Statistics, Memorial University of Newfoundland,\\
$\phantom{^\ddag}$\,St.\ John's (NL) A1C 5S7, Canada
\par}

\vspace{2mm}\par\noindent{\it\small
$^\ddag$\,Department of Mathematics, Kyiv Academic University, 36 Vernads'koho Blvd, 03142 Kyiv, Ukraine
\par}	

\vspace{2mm}\par\noindent{\it\small
$^\S$\,Mathematical Institute, Silesian University in Opava, Na Rybn\'\i{}\v{c}ku 1, 746 01 Opava, Czech Republic\\
$\phantom{^\S}$Institute of Mathematics of NAS of Ukraine, 3 Tereshchenkivska Str., 01024 Kyiv, Ukraine
\par}

\vspace{5mm}\par\noindent
E-mails:
skoval@mun.ca, abihlo@mun.ca, rop@imath.kiev.ua
	
\vspace{6mm}\par\noindent\hspace*{10mm}\parbox{140mm}{\small
We carry out the extended symmetry analysis of an ultraparabolic Fokker--Planck equation with three independent variables,
which is also called the Kolmogorov equation and
is singled out within the class of such Fokker--Planck equations by its remarkable symmetry properties.
In particular, its essential Lie invariance algebra is eight-dimensional,
which is the maximum dimension within the above class.
We compute the complete point symmetry pseudogroup of the Fokker--Planck equation using the direct method,
analyze its structure and single out its essential subgroup.
After listing inequivalent one- and two-dimensional subalgebras of the essential and maximal Lie invariance algebras of this equation,
we exhaustively classify its Lie reductions, carry out its peculiar generalized reductions
and relate the latter reductions to generating solutions with iterative action of Lie-symmetry operators.
As a result, we construct wide families of exact solutions of the Fokker--Planck equation,
in particular, those parameterized by an arbitrary finite number of arbitrary solutions of the (1+1)-dimensional linear heat equation.
We also establish the point similarity of the Fokker--Planck equation to the (1+2)-dimensional Kramers equations
whose essential Lie invariance algebras are eight-dimensional,
which allows us to find wide families of exact solutions of these Kramers equations in an easy way.
}\par\vspace{4mm}
	
\noprint{
Keywords:
(1+2)-dimensional ultraparabolic Fokker--Planck equation;
complete point-symmetry pseudogroup;
Lie symmetry;
Lie reductions;
exact solutions;
Kramers equations

MSC: 35B06, 35K10, 35K70, 35C05, 35A30, 35C06

35-XX Partial differential equations
  35Kxx Parabolic equations and parabolic systems {For global analysis, analysis on manifolds, see 58J35}
    35K05 Heat equation
    35K10 Second-order parabolic equations
    35K70 Ultraparabolic equations, pseudoparabolic equations, etc.
  35Qxx	Partial differential equations of mathematical physics and other areas of application [See also 35J05, 35J10, 35K05, 35L05]
    35Q84 Fokker-Planck equations {For fluid mechanics, see 76X05, 76W05; for statistical mechanics, see 82C31}
  35Axx General topics
    35A30 Geometric theory, characteristics, transformations [See also 58J70, 58J72]
  35Bxx Qualitative properties of solutions
    35B06 Symmetries, invariants, etc.
  35Cxx Representations of solutions
    35C05 Solutions in closed form
    35C06 Self-similar solutions
    35C07 Traveling wave solutions
}

\section{Introduction}

The Fokker--Planck and Kolmogorov equations provide powerful tools for adequately modeling a wide range of natural processes,
which involve considering fluctuations of a quantity under the action of a random perturbation.
Since the presence of a random noise is a common characteristic of many physical fields,
the Fokker--Planck equations have acquired high popularity in applied sciences.
However, theoretical studies of these equations have aroused lively interest as well,
in particular, in the field of group analysis of differential equations.

For the Fokker--Planck and Kolmogorov equations,
their Lie symmetries and other related objects are of course the most studied in dimension 1+1.
This study was initiated in the seminal paper~\cite{lie1881a} of Sophus Lie himself
in the course of the group analysis of the wider class of second-order linear partial differential equations
with two independent variables, including (1+1)-dimensional second-order linear evolution equations.
A number of papers that restate, specify or develop the above Lie's result have been published over the past decades,
see, e.g., \cite{popo2008a} for a review of these papers and a modern treatment of the problem.
The equivalence of (1+1)-dimensional second-order linear evolution equations with respect to point transformations
was considered in~\cite{blum1980a,cher1957a,lie1881a} and further in~\cite{gung2018a,john2001a,moro2003a}.
Darboux transformations between such equations were studied, e.g., in \cite{blum2004a,blum2020a,matv1991A,popo2008a}.
The group classification problems for the classes of (1+1)-dimensional Fokker--Planck and Kolmogorov equations
and their subclasses were solved in~\cite{popo2008a} up to the general point equivalence
and in~\cite{opan2022a} with respect to the corresponding equivalence groups using the mapping method of group classification.

The most general form of (1+2)-dimensional ultraparabolic Fokker--Planck equations is
\begin{gather}\label{eq:Fokker_Planck_superclass}
\begin{split}
&u_t+B(t,x,y)u_y=A^2(t,x,y)u_{xx}+A^1(t,x,y)u_x+A^0(t,x,y)u+C(t,x,y) \\
&\text{with}\quad A^2\neq 0,\quad B_x\neq 0.
\end{split}
\end{gather}
We denote the entire class of these equations by~$\bar{\mathcal F}$.
Thus, the tuple $\bar\theta:=(B,A^2,A^1,A^0,C)$ of arbitrary elements of the class~$\bar{\mathcal F}$
runs through the solution set of the system of the inequalities $A^2\neq0$ and $B_x\neq0$ with no restrictions on~$A^0$, $A^1$ and~$C$.
A partial preliminary group classification of the class $\bar{\mathcal F}$ was carried out in~\cite{davy2015a}.
Some subclasses of the class~$\bar{\mathcal F}$ were considered within the Lie-symmetry framework
in \cite{gung2018b,kova2013a,spic1997a,spic1999a,spic2011a,zhan2020a}.
Despite the number of papers on this subject, there are still many open problems
in the symmetry analysis of the entire class $\bar{\mathcal F}$,
its subclasses and even particular equations from this class.

In the present paper, we carry out extended symmetry analysis of the equation
\begin{gather}\label{eq:RemarkableFP}
u_t+xu_y=u_{xx},
\end{gather}
which is of the simplest form within the class $\bar{\mathcal F}$ and
corresponds to the values $B=x$, $A^2=1$ and $A^1=A^0=C=0$ of the arbitrary elements.
This equation is distinguished within the class~$\bar{\mathcal F}$ by its remarkable symmetry properties.
In particular, its essential Lie invariance algebra~$\mathfrak g^{\rm ess}$ is eight-dimensional,
which is the maximum dimension for equations from the class~$\bar{\mathcal F}$.
Moreover, it is, up to the point equivalence, a unique equation in~$\bar{\mathcal F}$
whose essential Lie invariance algebra is of this dimension.
That is why we refer to~\eqref{eq:RemarkableFP} as the remarkable Fokker--Planck equation.
The study of the equation~\eqref{eq:RemarkableFP} was initiated by Kolmogorov in 1934~\cite{kolm1934a},
and hence it is often called the Kolmogorov equation as well.
In particular, he constructed its fundamental solution,%
\footnote{%
There was a misprint in the constant factor of this solution in~\cite{kolm1934a}, which was corrected later.
}
\begin{gather}\label{eq:RemarkableFPFundSolKolm}
F(t,x,y,t',x',y')=\frac{\sqrt3H(t-t')}{2\pi(t-t')^2}
\exp\left(-\frac{(x-x')^2}{4(t-t')}-\frac{3\big(y-y'-\frac12(x+x')(t-t')\big)^2}{(t-t')^3}\right),
\end{gather}
where $H$ denotes the Heaviside step function.
A preliminary study of symmetry properties of~\eqref{eq:RemarkableFP}
was carried out in~\cite{gung2018b,kova2013a}.

The algebra~$\mathfrak g^{\rm ess}$ is nonsolvable, and its structure is complicated and specific,
which makes the classification of subalgebras of~$\mathfrak g^{\rm ess}$ nontrivial.
More specifically, this algebra is isomorphic to a semidirect sum ${\rm sl}(2,\mathbb R)\lsemioplus{\rm h}(2,\mathbb R)$
of the real order-two special linear Lie algebra ${\rm sl}(2,\mathbb R)$
and the real rank-two Heisenberg algebra ${\rm h}(2,\mathbb R)$,
where the action of the former algebra on the latter is given by the direct sum
of the one- and four-dimensional irreducible representations of ${\rm sl}(2,\mathbb R)$.
Such structure has not been studied before in the literature on symmetry analysis of differential equations
with regard to classifying subalgebras.
This has been an obstacle to the complete classification of Lie reductions of the equation~\eqref{eq:RemarkableFP},
which we successfully overcome in the present paper.

\looseness=1
Further, using the direct method, we construct the complete point symmetry pseudogroup~$G$ of this equation
and derive a nice representation for transformations from~$G$.
This essentially simplifies the subsequent classification of the one- and two-dimensional subalgebras
of the maximal Lie invariance algebra~$\mathfrak g$ of the equation~\eqref{eq:RemarkableFP} up to the $G$-equivalence,
which is required for optimally accomplishing Lie reductions of~\eqref{eq:RemarkableFP}.
We carefully analyze the structure of the pseudogroup~$G$
and modify the group operation in~$G$ by extending the domains of transformation compositions by continuity.
The main advantage of the suggested interpretation of~$G$ is that
then the pseudogroup~$G$ contains a subgroup~$G^{\rm ess}$,
which is a (finite-dimensional) Lie group with~$\mathfrak g^{\rm ess}$ as its Lie algebra.
We call the subgroup~$G^{\rm ess}$ the essential point symmetry group of the equation~\eqref{eq:RemarkableFP}.
Moreover, we surprisingly find out that
the pseudogroup~$G$ contains only one independent, up to combining with elements from the identity component of~$G$, discrete element.
As such an element, one can choose the involution that only alternates the sign of~$u$.
One more implication of the construction of the pseudogroup~$G$ is
that Kolmogorov's fundamental solution of the equation~\eqref{eq:RemarkableFP} is formally $G^{\rm ess}$-equivalent
to the function $u(t,x,y)=1-H(t)$.

The exhaustive classification of Lie reductions of the equation~\eqref{eq:RemarkableFP}
on the basis of listing $G^{\rm ess}$-inequivalent one- and two-dimensional subalgebras
of its essential Lie invariance algebra~$\mathfrak g^{\rm ess}$
leads to finding wide families of its exact solutions, which are in general $G^{\rm ess}$-inequivalent to each other.
The most interesting among these families are
three families parameterized by single arbitrary solutions of the classical (1+1)-dimensional linear heat equation
and one family parameterized by an arbitrary solution of 
the (1+1)-dimensional linear heat equation with a particular inverse square potential.
We show how to construct more general families of solutions of the equation~\eqref{eq:RemarkableFP}
using generalized reductions with respect to generalized symmetries
generated from elements of~$\mathfrak g^{\rm ess}$ via acting by recursion operators
that are counterparts of elements of~$\mathfrak g^{\rm ess}$ among first-order differential operators in total derivatives.
Since the (1+1)-dimensional linear heat equation with a particular inverse square potential
arises in the course of a Lie reduction of the equation~\eqref{eq:RemarkableFP},
we exhaustively carry out the classical symmetry analysis
of the (1+1)-dimensional linear heat equation with a general inverse square potential,
including the construction of its complete point symmetry pseudogroup by the direct method,
the comprehensive classification of its Lie reductions
and finding all its inequivalent Lie invariant solutions in closed form. 

Among (1+2)-dimensional ultraparabolic Fokker--Planck equations of specific form,
which are called (1+2)-dimensional Klein--Kramers equations or just Kramers equations,
we consider those whose essential Lie invariance algebras are eight-dimensional.
We map these equations to the equation~\eqref{eq:RemarkableFP} using point transformations
and thus reduce the entire study of these equations within the framework of symmetry analysis,
including the construction of exact solutions, to the study of the equation~\eqref{eq:RemarkableFP}.

The structure of the paper is as follows.
In Section~\ref{sec:RemarkableFPMIA}, we present the maximal Lie invariance algebra
of the equation~\eqref{eq:RemarkableFP} and describe its key properties.
Using the direct method, in Section~\ref{sec:RemarkableFPPointSymGroup},
we compute the complete point symmetry pseudogroup of the equation~\eqref{eq:RemarkableFP}
and analyze its structure, including the decomposition of this pseudogroup and the description of its discrete elements.
Section~\ref{sec:RemarkableFPClassIneqSubalgebras} is devoted to the classification
of one- and two-dimensional subalgebras of~$\mathfrak g^{\rm ess}$ and of~$\mathfrak g$.
These are the dimensions that are relevant to the framework of Lie reductions.
The classification of subalgebras lets us
comprehensively study
the codimension-one, -two and -three Lie reductions of the equation~\eqref{eq:RemarkableFP}
in Sections~\ref{sec:RemarkableFPLieRedCoD1}, \ref{sec:RemarkableFPLieRedCoD2} and~\ref{sec:RemarkableFPLieRedCoD3}, respectively,
and the subsequent construction of wide families of exact solutions of~\eqref{eq:RemarkableFP}.
In Section~\ref{sec:GenReductions},
we carry out its peculiar generalized reductions, which are associated with powers of certain Lie-symmetry operators,
and show that the corresponding families of invariant solutions can be generated by iteratively acting with Lie-symmetry operators.%
\footnote{%
A Lie-symmetry operator of a homogeneous linear differential equation~$\mathcal L$: $\mathfrak Lu=0$ is
a first-order linear differential operator~$\mathfrak Q$ in total derivatives that commutes with the operator~$\mathfrak L$
on solutions of~$\mathcal L$ or, equivalently,
such that the differential function $\mathfrak Qu$ is the characteristic of an (essential) Lie symmetry of~$\mathcal L$.
}
The similarity of Kramers equations from the class~$\bar{\mathcal F}$ with eight-dimensional essential Lie symmetry algebras
to the equation~\eqref{eq:RemarkableFP} with respect to point transformations is established explicitly in Section~\ref{sec:KramersEq}.
It leads to easily finding wide families of exact solutions of such Kramers equations.
In Section~\ref{sec:Conclusion}, we discuss how to develop results of the present paper.
Appendix~\ref{sec:SymHeatEqSquarePot} is devoted to extended symmetry analysis
of the (1+1)-dimensional linear heat equations with inverse square potentials,
including the explicit construction of their (real) exact solutions.
In view of results of Section~\ref{sec:RemarkableFPLieRedCoD1},
these solutions directly lead to the solutions of the equation~\eqref{eq:RemarkableFP}.
In Appendix~\ref{sec:HiddenSyms}, we discuss an optimized procedure of constructing
hiddenly invariant solutions of a general system of differential equations.

For readers' convenience,
the constructed exact solutions of the equation~\eqref{eq:RemarkableFP} are marked by the bullet symbol~$\bullet$\,.

\section{Lie invariance algebra}\label{sec:RemarkableFPMIA}

The classical infinitesimal approach results in the well-known algorithm
for computing the maximal Lie symmetry algebras of systems of differential equations~\cite{blum2010A,blum1989A,olve1993A}.
The maximal Lie invariance algebra of the equation~\eqref{eq:RemarkableFP} is
(see, e.g., \cite{kova2013a})
\begin{gather*}\label{eq:RemarkableFPMIA}
\mathfrak g:=\langle \mathcal P^t,\,\mathcal D,\,\mathcal K,\,
\mathcal P^3,\,\mathcal P^2,\,\mathcal P^1,\,\mathcal P^0,\,\mathcal I,\mathcal Z(f)\rangle,
\end{gather*}
where
\begin{gather*}
\mathcal P^t =\p_t,\ \
\mathcal D   =2t\p_t+x\p_x+3y\p_y-2u\p_u,\ \
\mathcal K   =t^2\p_t+(tx+3y)\p_x+3ty\p_y-(x^2\!+2t)u\p_u,\\[.5ex]
\mathcal P^3 =3t^2\p_x+t^3\p_y+3(y-tx)u\p_u,\ \
\mathcal P^2 =2t\p_x+t^2\p_y-xu\p_u,\ \
\mathcal P^1 =\p_x+t\p_y,\ \
\mathcal P^0 =\p_y,\\[.5ex]
\mathcal I   =u\p_u,\quad
\mathcal Z(f)=f(t,x,y)\p_u.
\end{gather*}
Here the parameter function $f$ of~$(t,x,y)$ runs through the solution set of the equation~\eqref{eq:RemarkableFP}.
	
The vector fields $\mathcal Z(f)$ constitute the infinite-dimensional abelian ideal $\mathfrak g^{\rm lin}$ of~$\mathfrak g$
associated with the linear superposition of solutions of~\eqref{eq:RemarkableFP}, $\mathfrak g^{\rm lin}:=\{\mathcal Z(f)\}$.
Thus, the algebra $\mathfrak g$ can be represented as a semidirect sum, $\mathfrak g=\mathfrak g^{\rm ess}\lsemioplus\mathfrak g^{\rm lin}$,
where
\begin{gather}\label{eq:RemarkableFPEssA}
\mathfrak g^{\rm ess}=\langle\mathcal P^t,\mathcal D,\mathcal K,\mathcal P^3,\mathcal P^2,\mathcal P^1,\mathcal P^0,\mathcal I\rangle
\end{gather}
is an (eight-dimensional) subalgebra of $\mathfrak g$,
called the essential Lie invariance algebra of~\eqref{eq:RemarkableFP}.
	
Up to the skew-symmetry of the Lie bracket, the nonzero commutation relations between the basis vector fields of $\mathfrak g^{\rm ess}$ are the following:
\begin{gather*}
[\mathcal P^t,\mathcal D]  = 2\mathcal P^t,\quad
[\mathcal P^t,\mathcal K]  =  \mathcal D,\quad
[\mathcal D,  \mathcal K]  = 2\mathcal K,\\[.5ex]
[\mathcal P^t,\mathcal P^3]= 3\mathcal P^2,\quad
[\mathcal P^t,\mathcal P^2]= 2\mathcal P^1,\quad	
[\mathcal P^t,\mathcal P^1]=  \mathcal P^0,\\[.5ex]
[\mathcal D,\mathcal P^3]  = 3\mathcal P^3,\quad		
[\mathcal D,\mathcal P^2]  =  \mathcal P^2,\quad
[\mathcal D,\mathcal P^1]  =- \mathcal P^1,\quad
[\mathcal D,\mathcal P^0]  =-3\mathcal P^0,\\[.5ex]
[\mathcal K,\mathcal P^2]  =- \mathcal P^3,\quad
[\mathcal K,\mathcal P^1]  =-2\mathcal P^2,\quad
[\mathcal K,\mathcal P^0]  =-3\mathcal P^1,\\[.5ex]
[\mathcal P^1,\mathcal P^2]=- \mathcal I,\quad
[\mathcal P^0,\mathcal P^3]= 3\mathcal I.
\end{gather*}
	
The algebra $\mathfrak g^{\rm ess}$ is nonsolvable.
Its Levi decomposition is given by $\mathfrak g^{\rm ess}=\mathfrak f\lsemioplus\mathfrak r$,
where the radical~$\mathfrak r$ of~$\mathfrak g^{\rm ess}$ coincides with the nilradical of~$\mathfrak g^{\rm ess}$ and
is spanned by the vector fields $\mathcal P^3$, $\mathcal P^2$, $\mathcal P^1$, $\mathcal P^0$ and~$\mathcal I$.
The Levi factor $\mathfrak f=\langle\mathcal P^t,\mathcal D,\mathcal K\rangle$ of~$\mathfrak g^{\rm ess}$
is isomorphic to ${\rm sl}(2,\mathbb R)$,
the radical~$\mathfrak r$ of~$\mathfrak g^{\rm ess}$ is isomorphic to the rank-two Heisenberg algebra ${\rm h}(2,\mathbb R)$,
and the real representation of the Levi factor~$\mathfrak f$ on the radical~$\mathfrak r$
coincides, in the basis $(\mathcal P^3,\mathcal P^2,\mathcal P^1,\mathcal P^0,\mathcal I)$,
with the real representation $\rho_3\oplus \rho_0$ of~${\rm sl}(2,\mathbb R)$.
Here $\rho_n$ is the standard real irreducible representation of~${\rm sl}(2,\mathbb R)$ in the $(n+1)$-dimensional vector space.
More specifically,
\[
\rho_n( \mathcal P^t)_{ij}=(n-j)\delta_{i,j+1},\quad
\rho_n( \mathcal D)_{ij}  =(n-2j)\delta_{ij},\quad
\rho_n(-\mathcal K)_{ij}  =j\delta_{i+1,j},
\]
where $i,j\in\{1,2,\dots,n+1\}$, $n\in\mathbb N\cup\{0\}$,
and $\delta_{kl}$ is the Kronecker delta, i.e., $\delta_{kl}=1$ if $k=l$ and $\delta_{kl}=0$ otherwise, $k,l\in\mathbb Z$.
Thus, the entire algebra~$\mathfrak g^{\rm ess}$ is isomorphic to the algebra~$L_{8,19}$
from the classification of indecomposable Lie algebras of dimensions up to eight
with nontrivial Levi decompositions, which was carried out in~\cite{turk1988a}.

Lie algebras whose Levi factors are isomorphic to the algebra ${\rm sl}(2,\mathbb R)$ often arise
within the field of group analysis of differential equations as Lie invariance algebras of parabolic partial differential equations.
At the same time, the action of Levi factors on the corresponding radicals is usually described
in terms of the representations $\rho_0$, $\rho_1$, $\rho_2$ or their direct sums.
To the best of our knowledge, algebras similar to $\mathfrak g^{\rm ess}$ were never considered
in group analysis from the point of view of their subalgebra structure.

\section{Complete point symmetry pseudogroup}\label{sec:RemarkableFPPointSymGroup}

We start computing the complete point symmetry pseudogroup~$G$ of the equation~\eqref{eq:RemarkableFP}
by presenting the exhaustive description of the equivalence groupoid of the class $\bar{\mathcal F}$.
Then we use special properties of this groupoid for deriving an explicit representation for elements of~$G$.
See~\cite{bihl2012a,bihl2017a,opan2017a,popo2006b,popo2010a,vane2020a} and references therein for definitions and theoretical results
on various structures constituted by point transformations within classes of differential equations.

\begin{theorem}\label{thm:EquivalenceGroupFPsuperClass}
The class~$\bar{\mathcal F}$ is normalized.
Its (usual) equivalence pseudogroup~$G^\sim_{\bar{\mathcal F}}$ consists of the point transformations with the components
\begin{subequations}\label{eq:EquivalenceGroupFPsuperClass}
\begin{gather}
\tilde t=T(t,y),
\quad
\tilde x=X(t,x,y),
\quad
\tilde y=Y(t,y),
\quad
\tilde u=U^1(t,x,y)u + U^0(t,x,y),
\label{eq:ClassFbarTransformationPart}\\
\tilde A^0=\dfrac{A^0}{T_t+BT_y}-\dfrac{A^1}{T_t+BT_y}\dfrac{U^1_x}{U^1}+\dfrac{A^2}{T_t+BT_y}\bigg(\left(\frac{U^1_x}{U^1}\right)^2-
\left(\frac{U^1_x}{U^1}\right)_x\bigg)+\dfrac1{U^1}\dfrac{U^1_t+BU^1_y}{T_t+BT_y},
\label{eq:A^0Transformation}\\
\tilde A^1=A^1\dfrac{X_x}{T_t+BT_y}-\dfrac{X_t+BX_y}{T_t+BT_y}+A^2\dfrac{X_{xx}-2X_xU^1_x/U^1}{T_t+BT_y},
\label{eq:A^1Transformation}\\
\tilde A^2=A^2\dfrac{X_x^2}{T_t+BT_y},
\quad
\tilde B=\dfrac{Y_t+BY_y}{T_t+BT_y},
\label{eq:EquivalenceGroupSuperClassA^2B}
\\
\tilde C=\dfrac{U^1}{T_t+BT_y}\left(C-E\dfrac{U^0}{U^1}\right),
\label{eq:CTransformation}
\end{gather}
\end{subequations}
where $T$, $X$, $Y$, $U^1$ and $U^0$ are arbitrary smooth functions of their arguments with~$(T_tY_y-T_yY_t)X_xU^1\neq0$, and $E:=\p_t+B\p_y-A^2\p_{xx}-A^1\p_x-A^0$.
\end{theorem}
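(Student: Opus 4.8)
The plan is to prove both assertions of the theorem simultaneously by the direct method for computing equivalence groups, since the normalization claim and the explicit form of $G^\sim_{\bar{\mathcal F}}$ are two facets of the same computation. I would begin by positing a general point transformation acting on the space coordinatized by $(t,x,y,u)$ together with the arbitrary elements $\bar\theta=(B,A^2,A^1,A^0,C)$, and impose that it maps every equation of the form~\eqref{eq:Fokker_Planck_superclass} to another equation of the same form. The key structural input is that the class is linear in~$u$ and that the differential structure distinguishes the variables: $t$ plays the role of evolution variable, $y$ is the "drift" variable coupled through $B u_y$, and $x$ is the only variable appearing in second-order derivatives (since $A^2\neq0$, $A^2 u_{xx}$ is the sole diffusion term).

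First I would establish the admissible \emph{form} of the transformation, namely~\eqref{eq:ClassFbarTransformationPart}. The linearity of~\eqref{eq:Fokker_Planck_superclass} in $u$ forces $\tilde u$ to be affine in $u$, giving $\tilde u=U^1(t,x,y)u+U^0(t,x,y)$ with $U^1\neq0$; this is the standard observation that point symmetries of a linear equation act linearly on the dependent variable. The more delicate part is showing $\tilde t=T(t,y)$ and $\tilde y=Y(t,y)$ are independent of $x$, while $\tilde x=X(t,x,y)$ may depend on all three. My strategy here is to expand $\tilde u_t+\tilde B\tilde u_{\tilde y}-\tilde A^2\tilde u_{\tilde x\tilde x}-\cdots$ in terms of the original derivatives of $u$ via the chain rule, and to match coefficients of the jet variables $u_{xx}$, $u_{xy}$, $u_{yy}$, $u_{xt}$, etc., in the transformed equation against the rigid template of~\eqref{eq:Fokker_Planck_superclass}, which permits only $u_t$, $u_{xx}$, $u_x$, $u_y$ and $u$. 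Requiring the absence of $\tilde u_{\tilde x\tilde x}$-induced terms other than a pure $u_{xx}$ term, together with the absence of any $u_{yy}$, $u_{xy}$ and second-order $t$-derivatives, yields a system of first-order PDEs on $T$, $X$, $Y$; solving these pins down the triangular dependence $\tilde t=T(t,y)$, $\tilde y=Y(t,y)$, $\tilde x=X(t,x,y)$, with the nondegeneracy condition $(T_tY_y-T_yY_t)X_x\neq0$ emerging as the requirement that the transformation be locally invertible and preserve the roles of the variables.

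Once the form is fixed, I would substitute it back and read off the transformation laws for the arbitrary elements by collecting, one at a time, the coefficients of $u_{xx}$, $u_x$, $u_y$, $u$ and the inhomogeneous term. Matching the $u_{xx}$-coefficient gives~\eqref{eq:EquivalenceGroupSuperClassA^2B} for $\tilde A^2=A^2X_x^2/(T_t+BT_y)$; matching the $u_y$-coefficient gives the companion formula $\tilde B=(Y_t+BY_y)/(T_t+BT_y)$; the $u_x$-coefficient yields~\eqref{eq:A^1Transformation}; the coefficient of $u$ (after dividing through by $U^1$) yields~\eqref{eq:A^0Transformation}; and the remaining free term produces~\eqref{eq:CTransformation}, where the operator $E=\p_t+B\p_y-A^2\p_{xx}-A^1\p_x-A^0$ acting on $U^0/U^1$ appears naturally because the inhomogeneous and the $u$-linear parts interact through the affine shift $U^0$. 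The appearance of the combination $T_t+BT_y$ as the universal denominator reflects that it is the transformed coefficient scaling the evolution derivative.

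The main obstacle, and the heart of the normalization claim, is verifying that this computation closes \emph{within the class} with $T,X,Y,U^1,U^0$ depending only on the base variables and \emph{not} on the arbitrary elements $\bar\theta$. Normalization means precisely that the equivalence groupoid is exhausted by the action of the equivalence group, i.e., that every admissible point transformation between two equations of the class is the restriction of a single $\bar\theta$-independent transformation of the form~\eqref{eq:EquivalenceGroupFPsuperClass}. To establish this I would check that the derived relations~\eqref{eq:A^0Transformation}--\eqref{eq:CTransformation} express $\tilde\theta$ as genuine functions of $\theta$ and of the transformation components alone, with no residual constraints forcing dependence of $(T,X,Y,U^1,U^0)$ on $\theta$; since $A^0$, $A^1$ and $C$ are unconstrained arbitrary elements, any such hidden dependence would manifest as an over-determined compatibility condition, and the absence of these conditions is what must be confirmed. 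I expect the bookkeeping in separating the jet-variable coefficients to be lengthy but mechanical; the genuinely substantive step is confirming the source-variable independence that upgrades "the equivalence group has this form" to the stronger statement "the class is normalized."
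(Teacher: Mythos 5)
There is nothing in the paper to compare your argument against: immediately after stating Theorem~\ref{thm:EquivalenceGroupFPsuperClass}, the authors write that its proof ``is beyond the subject of the present paper and will be presented elsewhere.'' The only benchmark is therefore the methodology they use for the analogous computations they do carry out (Theorem~\ref{thm:RemarkableFPSymGroup} and the results of Appendix~\ref{sec:SymHeatEqSquarePot}), and your plan is exactly that methodology: the direct method, with the affine action on $u$ coming from linearity, the triangular structure $\tilde t=T(t,y)$, $\tilde y=Y(t,y)$, $\tilde x=X(t,x,y)$ coming from the requirement that no second-order jet variables other than $u_{xx}$ survive the substitution, the transformation rules for the arbitrary elements read off by collecting coefficients, and normalization inferred from the $\bar\theta$-independence of the transformation components. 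The individual steps you name are the right ones, and the nondegeneracy condition $(T_tY_y-T_yY_t)X_xU^1\ne0$ does arise exactly as you say.

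As submitted, however, this is a plan rather than a proof, and the places where you write ``I would check'' or ``yields'' are precisely where the mathematical content lives. Three points are asserted rather than established. First, the affine dependence $\tilde u=U^1u+U^0$ is not a free ``standard observation'': within the direct method it must be extracted by allowing $\tilde u=U(t,x,y,u)$ and splitting with respect to the jet variables (the coefficient of $u_x^{\,2}$ arising from $\tilde u_{\tilde x\tilde x}$ forces $U_{uu}=0$); this short argument should be included, not cited. Second, you never actually perform the splitting that produces \eqref{eq:A^0Transformation}--\eqref{eq:CTransformation}; since the explicit form of these components is the entire content of the theorem and the input to the proof of Theorem~\ref{thm:RemarkableFPSymGroup}, deriving them is not optional bookkeeping --- in particular, the elimination of $u_t$ (and of $u_{tt}$, $u_{tx}$, $u_{ty}$ via differential consequences, which is what really forces $T_x=Y_x=0$ by generating fourth-order $x$-derivatives unless their coefficients vanish) needs to be done explicitly. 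Third, and most importantly for the normalization claim, it is not enough to observe that the derived relations express $\tilde{\bar\theta}$ through $\bar\theta$ and $(T,X,Y,U^1,U^0)$; you must show that, for a fixed but arbitrary source tuple $\bar\theta$, the determining equations terminate with $T$, $X$, $Y$, $U^1$, $U^0$ as free functions subject only to the stated nondegeneracy, with no residual equations coupling them to $\bar\theta$. That verification is what upgrades the description of $G^\sim_{\bar{\mathcal F}}$ to the statement that the class is normalized, and it is the step you have only named.
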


The proof of this theorem is beyond the subject of the present paper and will be presented elsewhere.

The equation~\eqref{eq:RemarkableFP} corresponds to the value
$(x,1,0,0,0)=:\bar\theta_0$ of the arbitrary-element tuple~$\bar\theta=(B,A^2,A^1,A^0,C)$ of the class $\bar{\mathcal F}$.
The vertex group $\mathcal G_{\bar\theta_0}:=\mathcal G^\sim_{\bar{\mathcal F}}(\bar\theta_0,\bar\theta_0)$
is the set of admissible transformations of the class~$\bar{\mathcal F}$ with~$\bar\theta_0$ as both their source and target,
$\mathcal G_{\bar\theta_0}=\{(\bar\theta_0,\Phi,\bar\theta_0)\mid\Phi\in G\}$.
The normalization of the class~$\bar{\mathcal F}$ means that its equivalence groupoid coincides
with the action groupoid of the pseudogroup~$G^\sim_{\bar{\mathcal F}}$,
and thus the latter groupoid necessarily contains the vertex group~$\mathcal G_{\bar\theta_0}$.
This argument allows us to use Theorem~\ref{thm:EquivalenceGroupFPsuperClass} in the course of computing the pseudogroup~$G$.

\begin{theorem}\label{thm:RemarkableFPSymGroup}
The complete point symmetry pseudogroup~$G$ of the remarkable Fokker--Planck equation~\eqref{eq:RemarkableFP}
consists of the transformations of the form
\begin{gather}\label{eq:RemarkableFPSymGroup}
\begin{split}
&\tilde t=\frac{\alpha t+\beta}{\gamma t+\delta},
\quad
\tilde x=\frac{\hat x}{\gamma t+\delta}
-\frac{3\gamma\hat y}{(\gamma t+\delta)^2},
\quad
\tilde y=\frac{\hat y}{(\gamma t+\delta)^3},
\\[1ex]
&\tilde u=\sigma(\gamma t+\delta)^2\exp\left(
\frac{\gamma\hat x^2}{\gamma t+\delta}
-\frac{3\gamma^2\hat x\hat y}{(\gamma t+\delta)^2}
+\frac{3\gamma^3\hat y^2}{(\gamma t+\delta)^3}
\right)
\\
&\hphantom{\tilde u={}}
\times\exp\big(
3\lambda_3(y-tx)-\lambda_2x-(3\lambda_3^2t^3+3\lambda_3\lambda_2t^2+\lambda_2^2t)
\big)
\big(u+f(t,x,y)\big),
\end{split}
\end{gather}
where
$\hat x:=x+3\lambda_3t^2+2\lambda_2t+\lambda_1$,
$\hat y:=y+\lambda_3t^3+\lambda_2t^2+\lambda_1t+\lambda_0$;
$\alpha$, $\beta$, $\gamma$ and $\delta$ are arbitrary constants with $\alpha\delta-\beta\gamma=1$;
$\lambda_0$,~\dots, $\lambda_3$ and $\sigma$ are arbitrary constants with $\sigma\ne0$,
and $f$ is an arbitrary solution of~\eqref{eq:RemarkableFP}.	
\end{theorem}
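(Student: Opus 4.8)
The plan is to exploit the normalization of the class $\bar{\mathcal F}$ established in Theorem~\ref{thm:EquivalenceGroupFPsuperClass}, which reduces the computation of $G$ to a differential-elimination problem rather than to solving the full determining equations from scratch. Since $\bar{\mathcal F}$ is normalized, every point symmetry of~\eqref{eq:RemarkableFP} is the restriction of an element of the equivalence (pseudo)group $G^\sim_{\bar{\mathcal F}}$ mapping the arbitrary-element tuple $\bar\theta_0=(x,1,0,0,0)$ to itself, as already noted via the vertex group $\mathcal G_{\bar\theta_0}$. Thus I would take the general equivalence transformation~\eqref{eq:EquivalenceGroupFPsuperClass}, substitute the source values $B=x$, $A^2=1$, $A^1=A^0=C=0$, and impose the five vertex conditions expressing that the target tuple is again $\bar\theta_0$: namely $\tilde A^2=1$, $\tilde A^1=\tilde A^0=\tilde C=0$, together with $\tilde B(\tilde t,\tilde x,\tilde y)=\tilde x$, the last reading $X(T_t+xT_y)=Y_t+xY_y$ after using $\tilde x=X$. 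The whole theorem then amounts to solving this overdetermined system for $T$, $X$, $Y$, $U^1$, $U^0$.

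First I would dispose of the kinematic conditions. From $\tilde A^2=1$ one gets $X_x^2=T_t+xT_y$, while the $\tilde B$-condition gives $X=(Y_t+xY_y)/(T_t+xT_y)$, whence the quotient rule yields $X_x=(T_tY_y-T_yY_t)/(T_t+xT_y)^2$. Combining the two forces $(T_tY_y-T_yY_t)^2=(T_t+xT_y)^5$; the left-hand side is independent of $x$, so $T_y=0$ and hence $T=T(t)$. Then $X$ is affine in $(x,y)$, $X_x=Y_y/T_t$ with $Y_y^2=T_t^3$, and $Y$ is affine in $y$. In particular $X_{xx}=0$, which simplifies the next step: the condition $\tilde A^1=0$ collapses to $X_t+xX_y=-2X_x\,U^1_x/U^1$, determining $(\ln U^1)_x$ as an affine function of $x$ and thus showing $\ln U^1=\mu(t)x^2+\nu(t,y)x+\pi(t,y)$ is quadratic in $x$.

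The decisive step is the condition $\tilde A^0=0$. Writing $L=\ln U^1$, it takes the compact form $L_t+xL_y+L_x^2-L_{xx}=0$, which, upon substituting the quadratic form and collecting powers of $x$, splits into three equations: the coefficients of $x^2$, $x^1$ and $x^0$ must each vanish. Feeding in the expressions for $\mu$ and $\nu$ coming from the $\tilde A^1$-condition (together with $X_x=\pm\sqrt{T_t}$ and $Y_y=\pm T_t^{3/2}$), the coefficient of $x^2$ reduces precisely to the vanishing of the Schwarzian derivative of $T$, i.e.\ $2T_{ttt}T_t-3T_{tt}^2=0$. This is exactly where the fractional-linear dependence enters: its general solution is $T=(\alpha t+\beta)/(\gamma t+\delta)$, and since $T_t=X_x^2>0$ the determinant $\alpha\delta-\beta\gamma$ is positive and may be scaled to $1$. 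The remaining two coefficient equations are then linear integrations in $t$ (affine in $y$) for the shift functions in $X$, $Y$ and for $\pi$; their integration constants are the parameters $\lambda_0,\dots,\lambda_3$ and the overall multiplier $\sigma$, and it is here that the polynomial-in-$t$ shifts $\hat x=x+3\lambda_3t^2+2\lambda_2t+\lambda_1$, $\hat y=y+\lambda_3t^3+\lambda_2t^2+\lambda_1t+\lambda_0$ and the Gaussian exponential prefactor of~\eqref{eq:RemarkableFPSymGroup} emerge. Finally, $\tilde C=0$ with $C=0$ reduces to $E(U^0/U^1)=0$ with $E=\p_t+x\p_y-\p_{xx}$, i.e.\ $U^0/U^1=f$ is an arbitrary solution of~\eqref{eq:RemarkableFP}, producing the factor $u+f$.

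I expect the main obstacle to be the honest integration and bookkeeping of the coupled constraint system arising from $\tilde A^0=0$: recognizing the Schwarzian, and then carrying the integration constants through the $x^1$- and $x^0$-equations so that the transformation assembles into exactly the closed form~\eqref{eq:RemarkableFPSymGroup}, including the precise matching of the exponential factor. A secondary subtlety is the treatment of the sign ambiguities $X_x=\pm\sqrt{T_t}$, $Y_y=\pm T_t^{3/2}$ and the sign of $\sigma$; one must check that these are already absorbed by letting $(\alpha,\beta,\gamma,\delta)$ range over all of $\mathrm{SL}(2,\mathbb R)$ (so that, e.g., $(\alpha,\beta,\gamma,\delta)\to(-\alpha,-\beta,-\gamma,-\delta)$ realizes the reflection in $(x,y)$) together with the sign of $\sigma$, so that no spurious independent discrete transformations are introduced into the final description of $G$.
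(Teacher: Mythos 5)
Your proposal is correct and follows essentially the same route as the paper's own proof: it invokes the normalization of $\bar{\mathcal F}$ to reduce the computation to the vertex group at $\bar\theta_0$, derives $T_y=0$ and $X_{xx}=0$ from the $\tilde A^2$- and $\tilde B$-conditions, integrates the $\tilde A^1$-condition to get $\ln U^1$ quadratic in $x$, splits the $\tilde A^0$-condition by powers of $x$ to obtain the vanishing Schwarzian (hence fractional-linear $T$) and the remaining integrations yielding $\lambda_0,\dots,\lambda_3,\sigma$, handles the sign ambiguities by the $\pm1$ freedom in $(\alpha,\beta,\gamma,\delta)$, and closes with $U^0=U^1f$ from the $\tilde C$-condition. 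The only difference is presentational (writing the $\tilde A^0$-condition compactly as $L_t+xL_y+L_x^2-L_{xx}=0$ with $L=\ln U^1$), and the explicit integrations you defer are exactly the ones the paper carries out.
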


\begin{proof}
We should integrate the system~\eqref{eq:EquivalenceGroupFPsuperClass} with~$\bar{\tilde\theta}=\bar\theta=\bar\theta_0$
with respect to the parameter functions~$T$, $X$, $Y$ and~$U^1$.
The equations~\eqref{eq:EquivalenceGroupSuperClassA^2B} take the form
\[
X=\dfrac{Y_t +xY_y}{T_t +xT_y},\quad X_x^2=T_t +xT_y.
\]
In view of the first of these equations, the second equation reduces to $(T_tY_y-T_yY_t)^2=(T_t+xT_y)^5$,
which implies $T_y=0$, $T_t>0$ and
\[
Y=\varepsilon T_t^{3/2}y+Y^0(t),
\]
where $\varepsilon=\pm1$, and~$Y^0$ is a function of~$t$ arising due to the integration with respect to~$y$.
This is why we have $X=(Y_t +xY_y)/T_t$, and hence $X_{xx}=0$.
Then the equation~\eqref{eq:A^1Transformation} simplifies to $X_t+xX_y=-2X_xU^1_x/U^1$ and thus integrates to
\[
U^1=V(t,y)\exp\left(-\frac{T_{tt}}{2T_t}x^2
-3\frac{2T_{ttt}T_t-T^2_{tt}}{8T_t^2}xy-\frac{(Y^0/T_t)_t}{2T_t^{5/2}}x
\right),
\]
where $V$ is a nonvanishing smooth function of~$(t,y)$, the explicit expression for which will be derived below.
Substituting the expression for~$U^1$ into the restricted equation~\eqref{eq:A^0Transformation},
\[
\left(\frac{U^1_x}{U^1}\right)^2-\left(\frac{U^1_x}{U^1}\right)_x+\frac{U^1_t+xU^1_y}{U^1}=0,
\]
leads to an equation whose left-hand side is a quadratic polynomial in~$x$.
Collecting the coefficients of~$x^2$, we derive the equation $T_{ttt}/T_t-\frac32(T_{tt}/T_t)^2=0$,
meaning that the Schwarzian derivative of~$T$ is zero.
Therefore, $T$ is a linear fractional function of~$t$, $T=(\alpha t+\beta)/(\gamma t+\delta)$.
Since the constants $\alpha$, $\beta$, $\gamma$ and $\delta$ are defined up to a constant nonzero multiplier
and $T_t>0$, we can assume that $\alpha\delta-\beta\gamma=1$.
Then the result of collecting the coefficients of~$x$ in the above equation can be represented, up to an inessential multiplier, in the form
\[
\frac{V_y}V=\frac{6\gamma^3}{(\gamma t+\delta)^3}y+\big((\gamma t+\delta)^3Y^0\big)_{ttt}
-6\gamma^2\big((\gamma t+\delta)Y^0\big)_t.
\]
The general solution of the last equation as an equation with respect to~$V$ is
\[
V=\phi(t)\exp\left(\frac{3\gamma^3}{(\gamma t+\delta)^3}y^2
+\big((\gamma t+\delta)^3Y^0\big)_{ttt}y
-6\gamma^2\big((\gamma t+\delta)Y^0\big)_ty\right),
\]
where $\phi$ is a nonvanishing smooth function of~$t$.
Analogously, we collect the summands without~$x$, substitute the above representation for~$V$ into the obtained equation,
split the result with respect to $y$ and in addition neglect inessential multipliers.
This gives the system of two equations
\begin{gather*}
\big((\gamma t+\delta)^3Y^0\big)_{tttt}=0,
\quad
\frac{\phi_t}{\phi}-\frac{2\gamma}{\gamma t+\delta}+\frac1{4}(\gamma t+\delta)^4\Big(\big((\gamma t+\delta)Y^0\big)_{tt}\Big)^2=0,
\end{gather*}
whose general solution can  be represented as
\begin{gather*}
Y^0=\dfrac{\lambda_3t^3+\lambda_2t^2+\lambda_1t+\lambda_0}{(\gamma t+\delta)^3},\\
\phi=\sigma(\gamma t+\delta)^2\exp\left(
\frac{\gamma\psi_t^{\,2}}{\gamma t+\delta}
-\frac{3\gamma^2\psi_t\psi}{(\gamma t+\delta)^2}
+\frac{3\gamma^3\psi^2}{(\gamma t+\delta)^3}
\right)
\exp\left(-\lambda_2^2t-3\lambda_3\lambda_2t^2-3\lambda_3^2t^3\right),
\end{gather*}
where $\psi$ is an arbitrary at most cubic polynomial of~$t$,
$\psi(t):=\lambda_3t^3+\lambda_2t^2+\lambda_1t+\lambda_0$, and $\lambda_0$,~\dots, $\lambda_3$ and~$\sigma$ are arbitrary constants with $\sigma\ne0$.
Since the constant parameters~$\alpha$, $\beta$, $\gamma$ and~$\delta$ are still defined up to the multiplier $\pm1$,
we can choose these parameters in such a way that $\varepsilon|\gamma t+\delta|=\gamma t+\delta$,
and then neglect the parameter~$\varepsilon$.

Finally, the equation~\eqref{eq:CTransformation} takes the form
\[
\left(\dfrac{U^0}{U^1}\right)_t+x\left(\dfrac{U^0}{U^1}\right)_y=\left(\dfrac{U^0}{U^1}\right)_{xx},
\]
and thus $U^0=U^1f$, where $f=f(t,x,y)$ is an arbitrary solution of~\eqref{eq:RemarkableFP}.
\end{proof}

If the transformations of the form~\eqref{eq:RemarkableFPSymGroup} and their compositions
are properly interpreted, then  the structure of the pseudogroup~$G$ is simplified.
Thus, the natural domain of a transformation~$\Phi$ of the form~\eqref{eq:RemarkableFPSymGroup}
is \[\mathop{\rm dom}\Phi=(\mathop{\rm dom}f\times\mathbb R_u)\setminus M_{\gamma\delta},\]
i.e., it can be assumed to coincide with the relative complement
of the set $M_{\gamma\delta}:=\{(t,x,y,u)\in\mathbb R^4\mid\gamma t+\delta=0\}$
with respect to the set $\mathop{\rm dom}f\times\mathbb R_u$, where~$\mathop{\rm dom}f$ is the domain of the function~$f$.
In view of the fact that parameters~$\gamma$ and~$\delta$ do not vanish simultaneously,
the set~$M_{\gamma\delta}$ is empty if $\gamma=0$
and is the hyperplane defined by the equation $t=-\delta/\gamma$ in the space $\mathbb R^4_{t,x,y,u}$ otherwise.
We modify the standard definition of composition in the case of transformations of the form~\eqref{eq:RemarkableFPSymGroup}.
More specifically, according to the standard definition,
the domain of the composition $\Phi_1\circ\Phi_2=:\tilde\Phi$ of transformations~$\Phi_1$ and~$\Phi_2$
is the preimage of the domain of~$\Phi_1$ with respect of~$\Phi_2$.
If the transformations~$\Phi_1$ and~$\Phi_2$ are of the form~\eqref{eq:RemarkableFPSymGroup}, then
\smash{$\mathop{\rm dom}\tilde\Phi=\Phi_2^{-1}(\mathop{\rm dom}\Phi_1)=(\mathop{\rm dom}\tilde f\times\mathbb R_u)
\setminus (M_{\gamma_2\delta_2}\cup M_{\tilde\gamma\tilde\delta})$},
where
$\tilde\gamma=\gamma_1\alpha_2+\delta_1\gamma_2$,
$\tilde\delta=\gamma_1\beta _2+\delta_1\delta_2$,
$\mathop{\rm dom}\tilde f=\big((\pi_*\Phi_2)^{-1}\mathop{\rm dom}f^1\big)\cap\mathop{\rm dom}f^2$,
$\pi$ is the natural projection onto~$\mathbb R^3_{t,x,y}$ in $\mathbb R^4_{t,x,y,u}$,
and the parameters with indices~1 and~2 and tildes correspond to~$\Phi_1$, $\Phi_2$ and~$\tilde\Phi$, respectively.
As the \emph{modified composition} $\Phi_1\circ^{\rm m}\Phi_2$ of transformations~$\Phi_1$ and~$\Phi_2$,
we take the extension of $\Phi_1\circ\Phi_2$ by continuity to the set
\[\smash{\mathop{\rm dom}\nolimits^{\rm m}\tilde\Phi:=(\mathop{\rm dom}\tilde f\times\mathbb R_u)\setminus M_{\tilde\gamma\tilde\delta}},\]
i.e., \smash{$\mathop{\rm dom}(\Phi_1\circ^{\rm m}\Phi_2)=\mathop{\rm dom}^{\rm m}\tilde\Phi$}.
Therefore, $\Phi_1\circ^{\rm m}\Phi_2$ is the transformation of the form~\eqref{eq:RemarkableFPSymGroup}
with the same parameters as in $\Phi_1\circ\Phi_2$ and with its natural domain.
This means that $\mathop{\rm dom}\nolimits^{\rm m}\tilde\Phi=\mathop{\rm dom}\tilde\Phi$
and the extension is trivial if $\gamma_1\gamma_2=0$,
and otherwise we redefine $\Phi_1\circ\Phi_2$ on the set
\smash{$(\mathop{\rm dom}\tilde f\times\mathbb R_u)\cap M_{\gamma_2\delta_2}$}.

Now we can analyze the structure of~$G$.
The point transformations of the form
\[
\mathscr Z(f)\colon\quad \tilde t=t,\quad \tilde x=x,\quad \tilde y=y,\quad \tilde u=u+f(t,x,y),
\]
where the parameter function $f=f(t,x,y)$ is an arbitrary solution of the equation~\eqref{eq:RemarkableFP},
are associated with the linear superposition of solutions of this equation
and thus can be considered as trivial.
They constitute the normal pseudosubgroup $G^{\rm lin}$ of the pseudogroup $G$.
The pseudogroup~$G$ splits over~$G^{\rm lin}$, $G=G^{\rm ess}\ltimes G^{\rm lin}$,
where $G^{\rm ess}$ is the \emph{subgroup} of~$G$ consisting of the transformations of the form~\eqref{eq:RemarkableFPSymGroup} with $f=0$
and with their natural domains, and thus it is an eight-dimensional Lie group.
The Lie group structure of~$G^{\rm ess}$ is the main benefit of redefining the transformation composition above.%
\footnote{%
For the standard transformation composition, we have only the representation $G=\bar G^{\rm ess}\ltimes G^{\rm lin}$,
where $\bar G^{\rm ess}$ is the pseudosubgroup of~$G$
consisting of the transformations of the form~\eqref{eq:RemarkableFPSymGroup} with $f=0$ and with all admitted domains.
Thus, the pseudosubgroup~$\bar G^{\rm ess}$ does not possess a group structure.
}
We call the subgroup~$G^{\rm ess}$ the \emph{essential point symmetry group} of the equation~\eqref{eq:RemarkableFP}.

The subgroup~$G^{\rm ess}$ itself splits over $R$, $G^{\rm ess}=F\ltimes R$.
Here $R$ and~$F$ are the normal subgroup and the subgroup of~$G^{\rm ess}$
that are singled out by the constraints $\alpha=\delta=1$, $\beta=\gamma=0$ and $\lambda_3=\lambda_2=\lambda_1=\lambda_0=0$, $\sigma=1$,
respectively.
They are isomorphic to the groups ${\rm H}(2,\mathbb R)\times\mathbb Z_2$ and ${\rm SL}(2,\mathbb R)$,
and their Lie algebras coincide with~$\mathfrak r\simeq{\rm h}(2,\mathbb R)$ and~$\mathfrak f\simeq{\rm sl}(2,\mathbb R)$.
Here ${\rm H}(2,\mathbb R)$ denotes the rank-two real Heisenberg group.
The normal subgroups~$R_{\rm c}$ and~$R_{\rm d}$ of~$R$ that are isomorphic to~${\rm H}(2,\mathbb R)$ and~$\mathbb Z_2$
are constituted by the elements of~$R$ with parameter values satisfying the constraints
$\sigma>0$ and $\lambda_3=\lambda_2=\lambda_1=\lambda_0=0$, $\sigma\in\{-1,1\}$, respectively.
The isomorphisms of~$F$ to ${\rm SL}(2,\mathbb R)$ and of~$R_{\rm c}$ to~${\rm H}(2,\mathbb R)$ are established by the correspondences
\[
(\alpha,\beta,\gamma,\delta)_{\alpha\delta-\beta\gamma=1}\mapsto
\begin{pmatrix}
\alpha & \beta\\
\gamma & \delta
\end{pmatrix},
\quad
(\lambda_3,\lambda_2,\lambda_1,\lambda_0,\sigma)_{,\;\sigma>0}\mapsto
\begin{pmatrix}
1 & 3\lambda_3 & -\lambda_2 & \ln\sigma\\
0 & 1          & 0          & \lambda_0\\
0 & 0          & 1          & \lambda_1\\
0 & 0          & 0          & 1
\end{pmatrix}.
\]
Thus, $F$ and~$R_{\rm c}$ are connected subgroups of~$G^{\rm ess}$, but~$R_{\rm d}$ is not.
The natural conjugacy action of the group~$F$ on the normal subgroup~$R$ is given by
$(\tilde\lambda_3,\tilde\lambda_2,\tilde\lambda_1,\tilde\lambda_0,\tilde\sigma)
=(\lambda_3,\lambda_2,\lambda_1,\lambda_0,\sigma)\,A$
in the parameterization~\eqref{eq:RemarkableFPSymGroup} of~$G$,
where $A=\varrho_3(\alpha,\beta,\gamma,\delta)\oplus (1)$,
and $\varrho_3$ is the standard real irreducible four-dimensional representation of~${\rm SL}(2,\mathbb R)$,
which can be identified with the action of~${\rm SL}(2,\mathbb R)$ on binary cubics,
\[
\varrho_3\colon\ (\alpha,\beta,\gamma,\delta)_{\alpha\delta-\beta\gamma=1}\mapsto
\begin{pmatrix}
\alpha^3         & 3\alpha^2\beta                       & 3\alpha\beta^2                     & \beta^3\\
\alpha^2\gamma   & 2\alpha\beta\gamma+\alpha^2\delta    & 2\alpha\beta\delta+\beta^2\gamma   & \beta^2\delta\\
\alpha\gamma^2   & 2\alpha\gamma\delta+\beta\gamma^2    & 2\beta\gamma\delta+\alpha\delta^2  & \beta\delta^2\\
\gamma^3         & 3\gamma^2\delta                      & 3\gamma\delta^2                    & \delta^3
\end{pmatrix}.
\]
Summing up, the group~$G^{\rm ess}$ is isomorphic to
$\big({\rm SL}(2,\mathbb R)\ltimes_{\varphi}{\rm H}(2,\mathbb R)\big)\times\mathbb Z_2$,
where the antihomomorphism
$\varphi\colon{\rm SL}(2,\mathbb R)\to{\rm Aut}({\rm H}(2,\mathbb R))$ is defined, in the chosen local coordinates, by
$
\varphi(\alpha,\beta,\gamma,\delta)=(\lambda_3,\lambda_2,\lambda_1,\lambda_0,\sigma)
\mapsto(\lambda_3,\lambda_2,\lambda_1,\lambda_0,\sigma)A.
$

\noprint{
\[
A^{-1}=
\begin{pmatrix}
\delta^3          & -\gamma\delta^2                      & \gamma^2\delta                      & -\gamma^3\\
-3\beta\delta^2   & 2\beta\gamma\delta+\alpha\delta^2    & -\beta\gamma^2-2\alpha\gamma\delta  & 3\alpha\gamma^2 \\
3\beta^2\delta    & -2\alpha\beta\delta-\beta^2\gamma    & \alpha^2\delta+2\alpha\beta\gamma   & -3\alpha^2\gamma\\
\beta^3           & \alpha\beta^2                        & -\alpha^2\beta                      & \alpha^3
\end{pmatrix}.
\]
}


Transformations from the one-parameter subgroups of~$G^{\rm ess}$ that are generated by the basis elements of~$\mathfrak g^{\rm ess}$
given in~\eqref{eq:RemarkableFPEssA} are of the following form:
\[\arraycolsep=0ex
\begin{array}{lllll}
\mathscr P^t(\epsilon)\colon  & \tilde t=t+\epsilon,\ \        & \tilde x=x,                        & \tilde y=y,                        & \tilde u=u,\\[.8ex]
\mathscr D  (\epsilon)\colon  & \tilde t={\rm e}^{2\epsilon}t, & \tilde x={\rm e}^\epsilon x,       & \tilde y={\rm e}^{3\epsilon}y,     & \tilde u={\rm e}^{-2\epsilon}u,\\
\mathscr K(\epsilon)  \colon  & \tilde t=\dfrac t{1\!-\!\epsilon t},\ \
& \tilde x=\dfrac x{1\!-\!\epsilon t}+\dfrac{3\epsilon y}{(1\!-\!\epsilon t)^2},\ \
& \tilde y=\dfrac y{(1\!-\!\epsilon t)^3},\ \
&\tilde u=(1\!-\!\epsilon t)^2{\rm e}^{\frac{\epsilon x^2}{1-\epsilon t}+\frac{3\epsilon^2xy}{(1-\epsilon t)^2}+\frac{3\epsilon^3y^2}{(1-\epsilon t)^3}}u,
\\[1ex]
\mathscr P^3(\epsilon)\colon  & \tilde t=t, & \tilde x=x+3\epsilon t^2,\ \ & \tilde y=y+\epsilon t^3,\ \  & \tilde u={\rm e}^{3\epsilon(y-tx)-3\epsilon^2t^3}u,\\[.8ex]
\mathscr P^2(\epsilon)\colon  & \tilde t=t, & \tilde x=x+2\epsilon t,      & \tilde y=y+\epsilon t^2,     & \tilde u={\rm e}^{-\epsilon x+\epsilon^2t}u,\\[.8ex]
\mathscr P^1(\epsilon)\colon\ & \tilde t=t, & \tilde x=x+\epsilon,         & \tilde y=y+\epsilon t,       & \tilde u=u,\\[.8ex]
\mathscr P^0(\epsilon)\colon  & \tilde t=t, & \tilde x=x,                  & \tilde y=y+\epsilon,         & \tilde u=u,\\[.8ex]
\mathscr I  (\epsilon)\colon  & \tilde t=t, & \tilde x=x,                  & \tilde y=y,\                 & \tilde u={\rm e}^\epsilon u,
\end{array}
\]
where $\epsilon$ is the group parameter.
At the same time, using this basis of~$\mathfrak g^{\rm ess}$ in the course of studying the structure of the group~$G^{\rm ess}$
hides some of its important properties and complicates its study.

Although the pushforward of the group~$G^{\rm ess}$ by the natural projection of~$\mathbb R^4_{t,x,y,u}$ onto~$\mathbb R_t$
coincides with the group of linear fractional transformations of~$t$ and is thus isomorphic to the group ${\rm PSL}(2,\mathbb R)$,
the subgroup~$F$ of~$G^{\rm ess}$ is isomorphic to the group ${\rm SL}(2,\mathbb R)$,
and its Iwasawa decomposition is given by the one-parameter subgroups of~$G^{\rm ess}$
respectively generated by the vector fields~$\mathcal P^t+\mathcal K$, $\mathcal D$ and $\mathcal P^t$.
The first subgroup, which is associated with~$\mathcal P^t+\mathcal K$, consists of the point transformations
\begin{gather}\label{eq:1ParGroupPtK}
\begin{split}&
\tilde t=\frac{\sin\epsilon+t\cos\epsilon}{\cos\epsilon-t\sin\epsilon},
\quad
\tilde x=\frac x{\cos\epsilon-t\sin\epsilon}
+\frac{3y\sin\epsilon}{(\cos\epsilon-t\sin\epsilon)^2},
\quad
\tilde y=\frac y{(\cos\epsilon-t\sin\epsilon)^3},
\\[1ex]&
\tilde u=(\cos\epsilon-t\sin\epsilon)^2\exp\left(
-\frac{x^2\sin\epsilon}{\cos\epsilon-t\sin\epsilon}
-\frac{3xy\sin^2\epsilon}{(\cos\epsilon-t\sin\epsilon)^2}
-\frac{3y^2\sin^3\epsilon}{(\cos\epsilon-t\sin\epsilon)^3}
\right)u
\end{split}
\end{gather}
parameterized by an arbitrary constant~$\epsilon$,
which is defined by the corresponding transformation up to a summand $2\pi k$, $k\in\mathbb Z$.

The equation~\eqref{eq:RemarkableFP} is invariant with respect to the involution~$\mathscr J$ simultaneously alternating the sign of~$(x,y)$,
\[\mathscr J \colon(t,x,y,u)\mapsto(t,-x,-y,u).\]
In the context of the one-parameter subgroups of~$G^{\rm ess}$
that are generated by the basis elements of~$\mathfrak g^{\rm ess}$ listed in~\eqref{eq:RemarkableFPEssA},
the involution~$\mathscr J$ looks like a discrete point symmetry transformation of~\eqref{eq:RemarkableFP},
but in fact this is not the case.
It belongs to the one-parameter subgroup of~$G^{\rm ess}$ generated by $\mathcal P^t+\mathcal K$.
More precisely, it coincides with the transformation~\eqref{eq:1ParGroupPtK} with $\epsilon=\pi$.
The value $\epsilon=-\pi/2$ corresponds to the transformation
\[
\mathscr K'\colon\ \ \tilde t=-\dfrac1t,\ \ \tilde x=\dfrac xt-3\dfrac y{t^2},\ \ \tilde y=\dfrac y{t^3},\ \
\tilde u=t^2{\rm e}^{\frac{x^2}t-\frac{3xy}{t^2}+\frac{3y^2}{t^3}}u,
\]
which also deceptively looks, in the above context, like a discrete point symmetry transformation of~\eqref{eq:RemarkableFP}
being independent with~$\mathscr J$
since the factorization $\mathscr J\circ\mathscr K'=\mathscr P^t(1)\circ\mathscr K(1)\circ\mathscr P^t(1)$ is not intuitive.
This is why it is relevant to accurately describe discrete point symmetries of the equation~\eqref{eq:RemarkableFP}.

\begin{corollary}\label{cor:RemarkableFPDiscretePointSyms}
A complete list of discrete point symmetry transformations of the remarkable Fokker--Planck equation~\eqref{eq:RemarkableFP}
that are independent up to combining with each other and with continuous point symmetry transformations of this equation
is exhausted by the single involution~$\mathscr I'$ alternating the sign of~$u$,
\[\mathscr I'\colon(t,x,y,u)\mapsto(t,x,y,-u).\]
Thus, the quotient group of the complete point symmetry pseudogroup~$G$
with respect to its identity component is isomorphic to $\mathbb Z_2$.
\end{corollary}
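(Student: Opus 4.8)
The plan is to produce a surjective, locally constant homomorphism $\chi\colon G\to\mathbb Z_2$ whose kernel is exactly the identity component $G^0$; then $G/G^0\cong\mathbb Z_2$ with $\mathscr I'$ representing the nontrivial coset, and the statement on discrete symmetries follows at once. First I would read off from~\eqref{eq:RemarkableFPSymGroup} that the coefficient of~$u$ in the last component of any $\Phi\in G$, that is $\partial\tilde u/\partial u=\sigma(\gamma t+\delta)^2\exp(\cdots)$, is a nowhere-vanishing smooth function whose sign is constant and equal to~$\sgn\sigma$, since $(\gamma t+\delta)^2>0$ and the exponential factors are positive on the domain. This coefficient is intrinsically attached to~$\Phi$, so $\chi\colon\Phi\mapsto\sgn\sigma$ is well defined. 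As the transformations are affine in~$u$, the $u$-coefficients multiply pointwise under the modified composition, and since each has constant sign, $\chi$ is a homomorphism; it is surjective because $\chi(\mathscr I')=-1$.

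Next I would show that the kernel $K=\{\Phi\in G\mid\sigma>0\}$ coincides with~$G^0$. Because $\chi$ is locally constant it is constant on the connected set~$G^0$, whence $G^0\subseteq K$. Conversely, by Theorem~\ref{thm:RemarkableFPSymGroup} the set~$K$ is the continuous image of the parameter space ${\rm SL}(2,\mathbb R)\times\mathbb R^4\times\mathbb R_{>0}\times V$, where the four real parameters are $\lambda_0,\dots,\lambda_3$, the positive factor is~$\sigma$, and~$V$ is the vector space of solutions~$f$ of~\eqref{eq:RemarkableFP}; each factor is connected, so $K$ is connected and therefore $K\subseteq G^0$. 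Hence $K=G^0$. In the language already established this reads $G^0=F\ltimes R_{\rm c}\ltimes G^{\rm lin}$, with $F\cong{\rm SL}(2,\mathbb R)$, $R_{\rm c}\cong{\rm H}(2,\mathbb R)$ and~$G^{\rm lin}$ all connected, the entire disconnection being carried by $R_{\rm d}\cong\mathbb Z_2$.

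It then follows that $G/G^0\cong\mathrm{im}\,\chi=\mathbb Z_2$. For the discrete-symmetry formulation, any $\Phi$ with $\chi(\Phi)=-1$ satisfies $\chi(\mathscr I'\circ^{\rm m}\Phi)=+1$, so $\mathscr I'\circ^{\rm m}\Phi\in G^0$; since $\mathscr I'$ is an involution, $\Phi$ is~$\mathscr I'$ composed with a continuous symmetry, and $\mathscr I'$ is thus the unique independent discrete point symmetry.

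I expect the main obstacle to be the identification of~$G^0$ in the second step, rather than the bookkeeping of~$\chi$. The explicit parameterization makes the involution~$\mathscr J$ and the map~$\mathscr K'$ look like genuine discrete symmetries, and the decisive point (already signalled before the corollary) is that both belong to the $\mathcal P^t+\mathcal K$-flow~\eqref{eq:1ParGroupPtK} and hence to the connected subgroup $F\cong{\rm SL}(2,\mathbb R)$. Singling out $\sgn\sigma$ as the one genuinely locally constant invariant, and verifying that its positive level set is connected, is the real content of the argument.
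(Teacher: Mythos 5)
Your proof is correct and follows essentially the same route as the paper's: both rest on the decomposition $G=(F\ltimes R)\ltimes G^{\rm lin}$ furnished by Theorem~\ref{thm:RemarkableFPSymGroup} and on the connectedness of $F\simeq{\rm SL}(2,\mathbb R)$, $R_{\rm c}\simeq{\rm H}(2,\mathbb R)$ and $G^{\rm lin}$, which reduces everything to the $\mathbb Z_2$-factor $R_{\rm d}$. Your explicit character $\chi=\sgn\sigma$ is a nice formalization that additionally certifies the point the paper leaves implicit, namely that $\mathscr I'$ itself does not lie in the identity component.
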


\begin{proof}
It is obvious that the entire pseudosubgroup~$G^{\rm lin}$ is contained in the connected component of the identity transformation in~$G$.
The same claim holds for the subgroups~$F$ and~$R_{\rm c}$
in view of their isomorphisms to the groups ${\rm SL}(2,\mathbb R)$ and~${\rm H}(2,\mathbb R)$, respectively.
Therefore, without loss of generality, a complete list of independent discrete point symmetry transformations of~\eqref{eq:RemarkableFP}
can be assumed to consist of elements of the subgroup~$R_{\rm d}$.
Thus, the only discrete point symmetry transformation of~\eqref{eq:RemarkableFP}
that is independent in the above sense is the transformation~$\mathscr I'$.
\end{proof}

Hereafter, the transformations
$\mathscr P^t(\epsilon)$, $\mathscr D(\epsilon)$, $\mathscr K(\epsilon)$,
$\mathscr P^3(\epsilon)$, $\mathscr P^2(\epsilon)$, $\mathscr P^1(\epsilon)$, $\mathscr P^0(\epsilon)$, $\mathscr I(\epsilon)$,
$\mathscr J$, $\mathscr K'$, $\mathscr J'$ and~$\mathscr Z(f)$
are called \emph{elementary point symmetry transformations} of the equation~\eqref{eq:RemarkableFP}.

In view of Theorem~\ref{thm:RemarkableFPSymGroup},
the formal application of the point symmetry transformation
$\Phi:=\mathscr I(\ln\tfrac{\sqrt 3}{2\pi})\circ\mathscr P^0(y')\circ\mathscr P^1(x')\circ\mathscr P^t(t')\circ\mathscr K'$
of the equation~\eqref{eq:RemarkableFP},
\begin{gather}\label{eq:MapFundSolutRemarkableFP}
\begin{split}
&\Phi\colon\quad
\tilde t=-\frac1t+t',\quad
\tilde x=\frac xt-3\frac y{t^2}+x',\quad
\tilde y=\frac y{t^3}+y',
\\
&\hphantom{\Phi\colon\quad}
\tilde u=\frac{\sqrt3}{2\pi}t^2
\exp\left(\frac{x^2}t-3\frac{xy}{t^2}+3\frac{y^2}{t^3}+3x'x-3x'\frac yt+3(x')^2t\right) u,
\noprint{
\\
\hphantom{\Phi\colon\quad}
\tilde u=\frac{\sqrt3}{2\pi(\tilde t-t')^2}
\exp\left(-\frac{(\tilde x-x')^2}{4(\tilde t-t')}-\frac{3\big(\tilde y-y'-\frac12(\tilde x+x')(\tilde t-t')\big)^2}{(\tilde t-t')^3}\right) u,
}
\end{split}
\end{gather}
maps the function $u(t,x,y)=1-H(t)$ to the fundamental solution~\eqref{eq:RemarkableFPFundSolKolm}
of the equation~\eqref{eq:RemarkableFP}.
Recall that $H$ denotes the Heaviside step function.
Note that attempts to interpret this fundamental solution within the framework of group analysis of differential equations
were made in~\cite{gung2018b,kova2013a}.

\section{Classification of inequivalent subalgebras}\label{sec:RemarkableFPClassIneqSubalgebras}

In order to carry out the Lie reductions of codimension one and two for the equation~\eqref{eq:RemarkableFP} in the optimal way,
we need to classify one- and two-dimensional subalgebras of~$\mathfrak g^{\rm ess}$ up to the $G^{\rm ess}$-equivalence.
In the course of this classification, we use the Levi decomposition $\mathfrak g^{\rm ess}=\mathfrak f\lsemioplus\mathfrak r$
and the fact that the Levi factor~$\mathfrak f$ is isomorphic to ${\rm sl}(2,\mathbb R)$.
This allows us to apply the technique of classifying the subalgebras of an algebra with a proper ideal suggested in~\cite{pate1975a}.
This technique becomes simpler if the algebra under consideration can be represented as the semidirect sum of a subalgebra and an ideal,
which are $\mathfrak f$ and~$\mathfrak r$ for the algebra~$\mathfrak g^{\rm ess}$, respectively.
An additional simplification is that an optimal list of subalgebras of~$\rm{sl}(2,\mathbb R)$ is well known
(see, e.g., \cite{pate1977a,popo2003a}).
Thus, for the realization~$\mathfrak f$ of~$\rm{sl}(2,\mathbb R)$, this list consists of the subalgebras
$\{0\}$, $\langle\mathcal P^t\rangle$,
$\langle\mathcal D\rangle$,
$\langle\mathcal P^t+\mathcal K\rangle$,
$\langle\mathcal P^t, \mathcal D\rangle$
and $\mathfrak f$ itself.
The subalgebras~$\mathfrak s_1$ and~$\mathfrak s_2$ of~$\mathfrak g^{\rm ess}$
are definitely $G^{\rm ess}$-inequivalent
if their projections~$\pi_{\mathfrak f}\mathfrak s_1$ and~$\pi_{\mathfrak f}\mathfrak s_2$ are $F$-inequivalent,
see Section~\ref{sec:RemarkableFPPointSymGroup}.
Here and in what follows, $\pi_{\mathfrak f}$ and $\pi_{\mathfrak r}$ denote
the natural projections of~$\mathfrak g^{\rm ess}$ onto~$\mathfrak f$ and~$\mathfrak r$
according to the decomposition $\mathfrak g^{\rm ess}=\mathfrak f\dotplus\mathfrak r$ of the vector space~$\mathfrak g^{\rm ess}$
as the direct sum of its subspaces~$\mathfrak f$ and~$\mathfrak r$.
In other words, we can use the projections~$\pi_{\mathfrak f}\mathfrak s$ of the subalgebras~$\mathfrak s$
of~$\mathfrak g^{\rm ess}$ of the same dimension for partitioning the set of these subalgebras into subsets
such that each subset contains no subalgebra being equivalent to a subalgebra from another subset.

Let us specifically describe applying the above technique to the classification
of one- and two-dimensional subalgebras of~$\mathfrak g^{\rm ess}$ up to the $G^{\rm ess}$-equivalence.
We fix the dimension~$d$ of subalgebras~$\mathfrak s$ to be classified, either $d=1$ or $d=2$,
and consider the subalgebras~$\mathfrak s_{\mathfrak f}$ of~$\mathfrak f$ from the above list with dimension~$d'$ less than or equal to~$d$.
For each of these subalgebras, we take the set of $d$-dimensional subalgebras of~$\mathfrak g^{\rm ess}$
with $\pi_{\mathfrak f}\mathfrak s=\mathfrak s_{\mathfrak f}$
and construct a complete list of $G^{\rm ess}$-inequivalent subalgebras in this set.

One usually classifies subalgebras of a Lie algebra up to their equivalence generated by the group of inner automorphisms of this algebra,
and this group is computed by summing up Lie series or solving Cauchy problems with the adjoint action of the algebra on itself,
see, e.g., \cite[Section~3.3]{olve1993A}.
At the same time, we already know that the algebra~$\mathfrak g^{\rm ess}$ is the Lie algebra of the group~$G^{\rm ess}$,
and the actions of~$G^{\rm ess}$ and of the group of inner automorphisms of~$\mathfrak g^{\rm ess}$ on~$\mathfrak g^{\rm ess}$ coincide.
Moreover, recall that for the purpose of finding Lie invariant solutions of the equation~\eqref{eq:RemarkableFP}
subalgebras of~$\mathfrak g^{\rm ess}$ have in fact to be classified up to the $G^{\rm ess}$-equivalence.
This is why following, e.g., \cite{card2011a}, we directly compute the action of~$G^{\rm ess}$ on~$\mathfrak g^{\rm ess}$
by pushing forward vector fields from~$\mathfrak g^{\rm ess}$ by transformations from~$G^{\rm ess}$.
The nonidentity pushforwards of basis elements of~$\mathfrak g^{\rm ess}$ by the elementary transformations from~$G^{\rm ess}$
are the following:
\begin{gather*}\arraycolsep=0ex
\begin{array}{l}
    \mathscr P^t(\epsilon)_*\mathcal D  =\mathcal D-2\epsilon\mathcal P^t,\\[.3ex]
    \mathscr P^t(\epsilon)_*\mathcal K  =\mathcal K-\epsilon\mathcal D+\epsilon^2\mathcal P^t,\\[.3ex]
    \mathscr P^t(\epsilon)_*\mathcal P^3=\mathcal P^3-3\epsilon\mathcal P^2+3\epsilon^2\mathcal P^1-\epsilon^3\mathcal P^0,\\[.3ex]
    \mathscr P^t(\epsilon)_*\mathcal P^2=\mathcal P^2-2\epsilon\mathcal P^1+\epsilon^2\mathcal P^0,\\[.3ex]
    \mathscr P^t(\epsilon)_*\mathcal P^1=\mathcal P^1-\epsilon\mathcal P^0,
\end{array}
\qquad
\begin{array}{l}
	\mathscr K(\epsilon)_*\mathcal D  =\mathcal D+2\epsilon\mathcal K,\\[.3ex]
	\mathscr K(\epsilon)_*\mathcal P^t=\mathcal P^t+\epsilon\mathcal D+\epsilon^2\mathcal K,\\[.3ex]
	\mathscr K(\epsilon)_*\mathcal P^2=\mathcal P^2+\epsilon\mathcal P^3,\\[.3ex]
	\mathscr K(\epsilon)_*\mathcal P^1=\mathcal P^1+2\epsilon\mathcal P^2+\epsilon^2\mathcal P^3,\\[.3ex]
	\mathscr K(\epsilon)_*\mathcal P^0=\mathcal P^0+3\epsilon\mathcal P^1+3\epsilon^2\mathcal P^2+\epsilon^3\mathcal P^3,
\end{array}
\\[1ex]\arraycolsep=0ex
\begin{array}{l}
    \mathscr D(\epsilon)_*\mathcal P^t=e^{2\epsilon}\mathcal P^t,\\[.3ex]	
    \mathscr D(\epsilon)_*\mathcal K  =e^{-2\epsilon}\mathcal K,
\end{array}\qquad
\begin{array}{l}
    \mathscr D(\epsilon)_*\mathcal P^3=e^{-3\epsilon}\mathcal P^3,\\[.3ex]
    \mathscr D(\epsilon)_*\mathcal P^2=e^{-\epsilon}\mathcal P^2,
\end{array}\qquad
\begin{array}{l}
    \mathscr D(\epsilon)_*\mathcal P^1=e^{\epsilon}\mathcal P^1,\\[.3ex]
    \mathscr D(\epsilon)_*\mathcal P^0=e^{3\epsilon}\mathcal P^0,
\end{array}
\\[1ex]\arraycolsep=0ex	
\begin{array}{l}
	\mathscr P^3(\epsilon)_*\mathcal P^t=\mathcal P^t+3\epsilon\mathcal P^2,\\[.3ex]
	\mathscr P^3(\epsilon)_*\mathcal D  =\mathcal D+3\epsilon\mathcal P^3,\\[.3ex]
	\mathscr P^3(\epsilon)_*\mathcal P^0=\mathcal P^0+3\epsilon\mathcal I,
\end{array}
\qquad
\begin{array}{l}
	\mathscr P^0(\epsilon)_*\mathcal D=\mathcal D-3\epsilon\mathcal P^0,\\[.3ex]
	\mathscr P^0(\epsilon)_*\mathcal K=\mathcal K-3\epsilon\mathcal P^1,\\[.3ex]
	\mathscr P^0(\epsilon)_*\mathcal P^3=\mathcal P^3-3\epsilon\mathcal I,
\end{array}
\\[1ex]\arraycolsep=0ex	
\begin{array}{l}
    \mathscr P^2(\epsilon)_*\mathcal P^t=\mathcal P^t+2\epsilon\mathcal P^1-\epsilon^2\mathcal I,\\[.3ex]
    \mathscr P^2(\epsilon)_*\mathcal D  =\mathcal D+\epsilon\mathcal P^2,\\[.3ex]
    \mathscr P^2(\epsilon)_*\mathcal K  =\mathcal K-\epsilon\mathcal P^3,\\[.3ex]
    \mathscr P^2(\epsilon)_*\mathcal P^1=\mathcal P^1-\epsilon\mathcal I,
\end{array}
\qquad
\begin{array}{l}
    \mathscr P^1(\epsilon)_*\mathcal P^t=\mathcal P^t+\epsilon\mathcal P^0,\\[.3ex]
    \mathscr P^1(\epsilon)_*\mathcal D  =\mathcal D-\epsilon\mathcal P^1,\\[.3ex]
    \mathscr P^1(\epsilon)_*\mathcal K  =\mathcal K-2\epsilon\mathcal P^2-\epsilon^2\mathcal I,\\[.3ex]
    \mathscr P^1(\epsilon)_*\mathcal P^2=\mathcal P^2+\epsilon\mathcal I,
\end{array}
\\[1ex]
\mathscr J_*(\mathcal P^3,\mathcal P^2,\mathcal P^1,\mathcal P^0)=(-\mathcal P^3,-\mathcal P^2,-\mathcal P^1,-\mathcal P^0),
\\
\mathscr K'_*(\mathcal P^t,\mathcal D,\mathcal K,\mathcal P^3,\mathcal P^2,\mathcal P^1,\mathcal P^0)
=(\mathcal K,-\mathcal D,\mathcal P^t,\mathcal P^0,-\mathcal P^1,\mathcal P^2,-\mathcal P^3).
\end{gather*}

The result of the classification of one- and two-dimensional subalgebras of~$\mathfrak g^{\rm ess}$ up to the $G^{\rm ess}$-equivalence
is presented in the two subsequent lemmas.

\begin{lemma}\label{lem:RemarkableFP1DSubalgs}
A complete list of $G^{\rm ess}$-inequivalent one-dimensional subalgebras of $\mathfrak g^{\rm ess}$ is exhausted by the subalgebras
\begin{gather*}
\mathfrak s_{1.1}=\langle\mathcal P^t+\mathcal P^3\rangle,\ \
\mathfrak s_{1.2}^\delta=\langle\mathcal P^t+\delta\mathcal I\rangle,\ \
\mathfrak s_{1.3}^\nu=\langle\mathcal D+\nu\mathcal I\rangle,\ \
\mathfrak s_{1.4}^\mu=\langle\mathcal P^t+\mathcal K+\mu\mathcal I\rangle,\\
\mathfrak s_{1.5}^\varepsilon=\langle\mathcal P^2+\varepsilon\mathcal P^0\rangle,\ \
\mathfrak s_{1.6}=\langle\mathcal P^1\rangle,\ \
\mathfrak s_{1.7}=\langle\mathcal P^0\rangle,\ \
\mathfrak s_{1.8}=\langle\mathcal I\rangle,
\end{gather*}
where $\varepsilon\in\{-1,1\}$, $\delta\in\{-1,0,1\}$, and $\mu$ and~$\nu$ are arbitrary real constants with $\nu\geqslant0$.
\end{lemma}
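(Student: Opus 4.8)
The plan is to classify one-dimensional subalgebras $\mathfrak s=\langle v\rangle$ of $\mathfrak g^{\rm ess}$ up to $G^{\rm ess}$-equivalence by stratifying according to the projection $\pi_{\mathfrak f}\mathfrak s$ onto the Levi factor $\mathfrak f\simeq{\rm sl}(2,\mathbb R)$, exactly as the paragraph preceding the lemma prescribes. Since $\pi_{\mathfrak f}$ is a Lie algebra homomorphism and $F$-inequivalent projections force $G^{\rm ess}$-inequivalence, I first use the known optimal list of subalgebras of ${\rm sl}(2,\mathbb R)$: the one-dimensional ones are $\langle\mathcal P^t\rangle$ (nilpotent), $\langle\mathcal D\rangle$ (hyperbolic/split) and $\langle\mathcal P^t+\mathcal K\rangle$ (elliptic/compact), together with the zero subalgebra. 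This splits the problem into four cases according to whether $\pi_{\mathfrak f}v$ is $F$-equivalent to a multiple of $\mathcal P^t$, of $\mathcal D$, of $\mathcal P^t+\mathcal K$, or vanishes.

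Within each case I would write a general representative $v=w+r$ with $w$ the fixed Levi representative and $r\in\mathfrak r=\langle\mathcal P^3,\mathcal P^2,\mathcal P^1,\mathcal P^0,\mathcal I\rangle$, and then exploit the residual stabilizer — the subgroup of $G^{\rm ess}$ fixing $\langle w\rangle$ — to simplify $r$ by pushing forward using the explicit formulas for $\mathscr P^t(\epsilon)_*$, $\mathscr K(\epsilon)_*$, $\mathscr D(\epsilon)_*$, the nilpotent pushforwards $\mathscr P^i(\epsilon)_*$, and the discrete maps $\mathscr J_*$, $\mathscr K'_*$ listed just above the lemma. For $w=0$ the residual freedom is all of $G^{\rm ess}$, and $r$ lives in the Heisenberg radical; using the $\rho_3\oplus\rho_0$ module structure (where $\mathcal P^3,\mathcal P^2,\mathcal P^1,\mathcal P^0$ form a $4$-dimensional irreducible ${\rm sl}(2,\mathbb R)$-module and $\mathcal I$ is the center), the ${\rm SL}(2,\mathbb R)$-orbits on the binary-cubic module $\rho_3$ together with scaling and $\mathscr I(\epsilon)$-rescaling collapse $r$ to one of $\mathcal P^2+\varepsilon\mathcal P^0$, $\mathcal P^1$, $\mathcal P^0$ or $\mathcal I$; this yields $\mathfrak s_{1.5}^\varepsilon,\mathfrak s_{1.6},\mathfrak s_{1.7},\mathfrak s_{1.8}$. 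For $w=\mathcal P^t$, the stabilizer contains $\mathscr P^t$ and $\mathscr D$ and the unipotent radical transformations, which I would use to annihilate the $\mathcal P^3,\mathcal P^2,\mathcal P^1,\mathcal P^0$ components (this is where the cubic acts transitively enough to either kill the nilpotent part or normalize it to $\mathcal P^3$), leaving either $\mathcal P^t+\mathcal P^3$ or $\mathcal P^t+\delta\mathcal I$; scaling by $\mathscr D$ normalizes the surviving $\mathcal I$-coefficient to $\delta\in\{-1,0,1\}$, giving $\mathfrak s_{1.1}$ and $\mathfrak s_{1.2}^\delta$. The cases $w=\mathcal D$ and $w=\mathcal P^t+\mathcal K$ proceed similarly and produce $\mathfrak s_{1.3}^\nu$ (with the sign of $\nu$ fixed by $\mathscr K'$, hence $\nu\geqslant0$) and $\mathfrak s_{1.4}^\mu$.

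The \emph{main obstacle} is twofold. First, in each case one must check that the central coefficient of $\mathcal I$ is genuinely an invariant that cannot be removed: the pushforward formulas show that $\mathscr P^i(\epsilon)_*$ and $\mathscr K(\epsilon)_*$ can \emph{create} an $\mathcal I$-component out of the other radical generators (e.g.\ $\mathscr P^2(\epsilon)_*\mathcal P^t=\mathcal P^t+2\epsilon\mathcal P^1-\epsilon^2\mathcal I$), so one must carefully track whether the available stabilizer can adjust $\mathcal I$ freely or only discretely. This is precisely what distinguishes the continuous parameters $\mu,\nu$ (genuinely inequivalent, modulo the sign normalization for $\nu$) from the discrete $\delta$ and $\varepsilon$. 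Second, one must verify that the reduced representatives across different Levi strata are mutually inequivalent and that no representative accidentally coincides with another under the discrete symmetries $\mathscr J$ and $\mathscr K'$; the $F$-inequivalence of $\langle\mathcal P^t\rangle$, $\langle\mathcal D\rangle$, $\langle\mathcal P^t+\mathcal K\rangle$ and $\{0\}$ handles the cross-stratum part automatically, so the real care is needed only within each stratum. Throughout, the computations are routine applications of the tabulated pushforwards, and I would organize them as a short case analysis rather than display every orbit calculation.
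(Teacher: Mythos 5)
Your proposal follows essentially the same route as the paper's proof: stratify one-dimensional subalgebras by their projection onto the Levi factor using the known optimal list for $\mathrm{sl}(2,\mathbb R)$, identify the radical component $\langle\mathcal P^3,\mathcal P^2,\mathcal P^1,\mathcal P^0\rangle$ with the module of real binary cubics to normalize it, and use the tabulated pushforwards (including $\mathscr K'_*$ to fix the sign of $\nu$, and the $\mathscr P^i(\epsilon)_*$ maps to adjust or create the $\mathcal I$-component) to reach the listed representatives. The only minor imprecision is the appeal to ``$\mathscr I(\epsilon)$-rescaling'' in the $w=0$ stratum --- $\mathcal I$ is central, so that pushforward acts trivially; what actually normalizes the $\mathcal I$-coefficient there is rescaling the generator together with a suitable $\mathscr P^i(\epsilon)_*$ --- but this does not affect the correctness of the overall argument.
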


\begin{proof}
A complete set of $F$-inequivalent $d'$-dimensional subalgebras of~$\mathfrak f$ with $d'\leqslant1$
consists of the subalgebras
$\langle\mathcal P^t\rangle$,
$\langle\mathcal D\rangle$,
$\langle\mathcal P^t+\mathcal K\rangle$ and~$\{0\}$.
Therefore, without loss of generality we can consider only one-dimensional subalgebras of~$\mathfrak g^{\rm ess}$
with basis vector fields~$Q$ of the general form
\[
Q = \hat Q+a_3\mathcal P^3+3a_2\mathcal P^2+3a_1\mathcal P^1+a_0\mathcal P^0+b\mathcal I,
\]
where $\hat Q\in\{\mathcal P^t,\mathcal D,P^t+\mathcal K,0\}$ and $a_0$, \dots, $a_3$ and $b$ are real constants.
We factor out the multiplier 3 in the coefficients of~$\mathcal P^1$ and~$\mathcal P^2$
since then the coefficients~$a_3$, $a_2$, $a_1$ and~$a_0$ are changed under the action of~$F$
in the same way as the coefficients of the real cubic binary form $a_3x^3+3a_2x^2y+3a_1xy^2+a_0y^3$
are changed under the standard action of the group of linear fractional transformations on such forms,
cf.~\cite[Example 2.22]{olve1999A}.
We need to further simplify the vector fields~$Q$ of the above form
by acting with elements of~$G^{\rm ess}$ on them and/or scaling them.

Let $\hat Q=\mathcal P^t$.
If $a_3\ne0$, then we successively set $a_3=1$, $a_2=a_1=a_0=0$ and $b=0$
using $\mathscr D(e^{\epsilon_4})_*$ simultaneously with rescaling~$Q$,
$\mathscr P^1(\epsilon_1)_*\circ\mathscr P^2(\epsilon_2)_*\circ\mathscr P^3(\epsilon_3)_*$ and $\mathscr P^0(\epsilon_0)_*$
with appropriate constants~$\epsilon_i$, $i=0,\dots,4$, respectively.
This gives the subalgebra $\mathfrak s_{1.1}$.
Similarly, for $a_3=0$, we apply $\mathscr P^3(\epsilon_2)_*$, $\mathscr P^2(\epsilon_1)_*$ and $\mathscr P^1(\epsilon_0)_*$
with appropriate constants~$\epsilon_i$, $i=1,2,3$, to get rid of $a_2$, $a_1$ and $a_0$.
Then, the action of $\mathscr D(e^{\epsilon_b})_*$ with appropriate $\epsilon_b$ and rescaling~$Q$ allow us to set $b\in\{-1,1\}$ if $b\ne0$.
Thus, we obtain the subalgebras~$\mathfrak s_{1.2}^\delta$.

For $\hat Q=\mathcal D$ and $\hat Q=\mathcal P^t+\mathcal K$,
removing the summands with $\mathcal P^3$, $\mathcal P^2$, $\mathcal P^1$ and $\mathcal P^0$ is analogous.
The only possible further simplification is that in the case $\hat Q=\mathcal D$,
the sign of the obtained value of~$b$ can be alternated using $\mathscr K'_*$ and alternating the sign of~$Q$.
This results in the collections of the subalgebras~$\mathfrak s_{1.3}^\nu$ and~$\mathfrak s_{1.4}^\mu$, respectively.

Let $\hat Q=0$.
The classification of real binary cubics presented in the second table on page~26 in~\cite{olve1999A} implies that,
up to the $F$-equivalence and rescaling~$Q$, we have $a_3=0$ and $(3a_2,3a_1,a_0)\in\{(1,\varepsilon,0),(0,1,0),(0,0,1),(0,0,0)\}$.
If $(a_3,a_2,a_1,a_0)\ne(0,0,0,0)$, then using one of the actions
$\mathscr P^3(\epsilon)_*$, $\mathscr P^2(\epsilon)_*$, $\mathscr P^1(\epsilon)_*$ and $\mathscr P^0(\epsilon)_*$
we can set $b=0$; otherwise we set $b=1$ by rescaling~$Q$.
Thus, we obtain the rest of subalgebras from the list given in the lemma's statement.
\end{proof}

\begin{lemma}\label{lem:RemarkableFP2DSubalgs}
A complete list of $G^{\rm ess}$-inequivalent two-dimensional subalgebras of $\mathfrak g^{\rm ess}$ is given~by
\begin{gather*}
\mathfrak s_{2.1}^\mu=\langle\mathcal P^t,\mathcal D+\mu\mathcal I\rangle,
\ \
\mathfrak s_{2.2}^\delta=\langle\mathcal P^t+\delta\mathcal I,\mathcal P^0\rangle,\ \
\mathfrak s_{2.3}=\langle\mathcal P^t,\mathcal P^0+\mathcal I\rangle,\\
\mathfrak s_{2.4}^\mu=\langle\mathcal D+\mu\mathcal I,\mathcal P^1\rangle,\ \
\mathfrak s_{2.5}^\mu=\langle\mathcal D+\mu\mathcal I,\mathcal P^0\rangle,\ \
\\
\mathfrak s_{2.6}=\langle\mathcal P^0,\mathcal P^1\rangle,\ \
\mathfrak s_{2.7}=\langle\mathcal P^0,\mathcal P^2\rangle,\ \
\mathfrak s_{2.8}^\varepsilon=\langle\mathcal P^1,\mathcal P^3+\varepsilon\mathcal P^0\rangle,\ \
\\
\mathfrak s_{2.9}=\langle\mathcal P^t+\mathcal P^3,\mathcal I\rangle,\ \
\mathfrak s_{2.10}^\delta=\langle\mathcal P^t,\mathcal I\rangle,\ \
\mathfrak s_{2.11}=\langle\mathcal D,\mathcal I\rangle,\ \
\mathfrak s_{2.12}=\langle\mathcal P^t+\mathcal K,\mathcal I\rangle,
\\
\mathfrak s_{2.13}^\varepsilon=\langle\mathcal P^2+\varepsilon\mathcal P^0,\mathcal I\rangle,\ \
\mathfrak s_{2.14}=\langle\mathcal P^1,\mathcal I\rangle,\ \
\mathfrak s_{2.15}=\langle\mathcal P^0,\mathcal I\rangle,
\end{gather*}
where $\varepsilon\in\{-1,1\}$, $\delta\in\{-1,0,1\}$, and $\mu$ is an arbitrary real constant.
\end{lemma}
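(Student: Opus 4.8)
The plan is to run the same projection-based scheme of Patera et al.\ that was used for the one-dimensional case, now with $d=2$. For a two-dimensional subalgebra $\mathfrak s$ the projection $\pi_{\mathfrak f}\mathfrak s$ has dimension $d'\in\{0,1,2\}$, and since subalgebras with $F$-inequivalent projections are automatically $G^{\rm ess}$-inequivalent, I would organize the whole argument around the value of $d'$ and, when $d'=1$, around the representative of $\pi_{\mathfrak f}\mathfrak s$ drawn from the optimal list $\langle\mathcal P^t\rangle$, $\langle\mathcal D\rangle$, $\langle\mathcal P^t+\mathcal K\rangle$; when $d'=2$ the only option is $\pi_{\mathfrak f}\mathfrak s=\langle\mathcal P^t,\mathcal D\rangle$. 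Throughout, the admissible simplifications are the pushforwards by the normalizer of the chosen projection in $F$, the full radical group $R$, and the discrete maps $\mathscr J$, $\mathscr K'$, all read off from the action of $G^{\rm ess}$ on $\mathfrak g^{\rm ess}$ tabulated above.

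For $d'=2$ one has $\mathfrak s\cap\mathfrak r=\{0\}$, so $\mathfrak s=\langle\mathcal P^t+r_1,\mathcal D+r_2\rangle$ with $r_1,r_2\in\mathfrak r$; imposing closure and using the Borel subgroup of $F$ normalizing the projection together with $R$ to absorb the radical parts should reduce $\mathfrak s$ to $\mathfrak s_{2.1}^\mu$, with the surviving $\mathcal I$-coefficient $\mu$ an invariant. For $d'=1$ I would write $\mathfrak s=\langle\hat Q+q,\,Z\rangle$ with $q\in\mathfrak r$ and $0\ne Z\in\mathfrak r$ spanning $\mathfrak s\cap\mathfrak r$. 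Closure forces $[\hat Q+q,\,Z]\in\langle Z\rangle$, i.e.\ the $\rho_3$-part of $Z$ must be an eigenvector of $\mathrm{ad}_{\hat Q}$, with a residual $\mathcal I$-condition. The three projections then diverge: for $\hat Q=\mathcal D$ the eigenvectors are the weight vectors $\mathcal P^3,\mathcal P^2,\mathcal P^1,\mathcal P^0,\mathcal I$, and $\mathscr K'$ identifies the pairs $\mathcal P^3\leftrightarrow\mathcal P^0$ and $\mathcal P^2\leftrightarrow\mathcal P^1$, leaving $Z\in\{\mathcal P^1,\mathcal P^0,\mathcal I\}$ and hence $\mathfrak s_{2.4}^\mu,\mathfrak s_{2.5}^\mu,\mathfrak s_{2.11}$; for $\hat Q=\mathcal P^t$ the operator is nilpotent, so $Z$ lies in its kernel $\langle\mathcal P^0,\mathcal I\rangle$, giving $\mathfrak s_{2.2}^\delta,\mathfrak s_{2.3},\mathfrak s_{2.9},\mathfrak s_{2.10}$ (the sign in the $\mathcal P^3$-summand being absorbed by $\mathscr J$); and for $\hat Q=\mathcal P^t+\mathcal K$ the restriction to the $\rho_3$-block has purely imaginary spectrum and no real eigenvector, so $Z=\mathcal I$ is forced and only $\mathfrak s_{2.12}$ survives. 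In each case the residual stabilizer of $\langle\hat Q\rangle$ and $\langle Z\rangle$ inside $N_F(\langle\hat Q\rangle)\ltimes R$ is used to clear as much of $q$ as possible.

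For $d'=0$ the subalgebra lies in the Heisenberg radical $\mathfrak r$ and must be classified up to $\rho_3\oplus\rho_0$ acting through $F$ and the inner automorphisms coming from $R_{\rm c}$. I would split according to whether $\mathcal I\in\mathfrak s$. If $\mathcal I\in\mathfrak s$, then $\mathfrak s=\langle W,\mathcal I\rangle$ with $0\ne W$ in the four-dimensional $\rho_3$-block $\langle\mathcal P^3,\mathcal P^2,\mathcal P^1,\mathcal P^0\rangle$, and $W$ is reduced exactly as in the $\hat Q=0$ part of Lemma~\ref{lem:RemarkableFP1DSubalgs} via the classification of real binary cubics under $\mathrm{SL}(2,\mathbb R)$, producing $\mathfrak s_{2.13}^\varepsilon,\mathfrak s_{2.14},\mathfrak s_{2.15}$. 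If $\mathcal I\notin\mathfrak s$, then $\mathfrak s$ projects isomorphically onto a two-dimensional subspace of the $\rho_3$-block which, by closure, is isotropic for the commutator symplectic form, hence Lagrangian; the central shifts from $R_{\rm c}$ annihilate the induced $\mathcal I$-components, and classifying Lagrangian planes under $\rho_3(\mathrm{SL}(2,\mathbb R))$ yields $\mathfrak s_{2.6},\mathfrak s_{2.7},\mathfrak s_{2.8}^\varepsilon$.

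The main obstacle is twofold and lives in the low-projection analysis rather than in the eigenvector bookkeeping. First, in the $d'=0$, $\mathcal I\notin\mathfrak s$ case one must determine precisely when the induced $\mathcal I$-components can be removed, i.e.\ classify Lagrangian $2$-planes of $\rho_3$ under $\mathrm{SL}(2,\mathbb R)$ together with the residual central shifts; this does not reduce verbatim to the binary-cubic list and needs its own orbit analysis. Second, and more delicate, the simplifications of $q$ and of $Z$ in the $d'=1$ case are \emph{coupled}: a pushforward such as $\mathscr P^3(\epsilon)$ that shifts the $\mathcal I$-component of $Z$ simultaneously alters $\hat Q+q$, so the two generators cannot be normalized independently. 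Disentangling this coupling is exactly what distinguishes, for instance, $\mathfrak s_{2.2}^\delta$ from $\mathfrak s_{2.3}$ and pins down the genuine invariants $\mu,\delta,\varepsilon$. The closing step is routine: confirm pairwise $G^{\rm ess}$-inequivalence (the $F$-class of $\pi_{\mathfrak f}\mathfrak s$ separates the cases, and within each case the surviving $\mathcal I$-coefficients and discrete signs are checked to be invariant under the residual transformations) and confirm completeness, since every two-dimensional subalgebra falls into exactly one of the cases above.
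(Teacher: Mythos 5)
Your skeleton is the same projection scheme the paper uses: split by the $F$-class of $\pi_{\mathfrak f}\mathfrak s$, handle the radical parts with the tabulated pushforwards, and your eigenvector bookkeeping for $d'\in\{1,2\}$ (nilpotency of $\mathrm{ad}_{\mathcal P^t}$, weight vectors for $\mathcal D$ with the $\mathscr K'$-identifications, absence of real eigenvectors for $\mathcal P^t+\mathcal K$) reproduces the paper's cases $(\mathcal P^t,\mathcal D)$, $(\mathcal P^t,0)$, $(\mathcal D,0)$, $(\mathcal P^t+\mathcal K,0)$ correctly. Your reformulation of the $d'=0$, $\mathcal I\notin\mathfrak s$ case as the classification of Lagrangian planes of the symplectic form $[\,\cdot,\cdot\,]$ on the $\rho_3$-block is a genuinely nice conceptual gloss, and your worry about removing the induced $\mathcal I$-components is actually unfounded: since the plane $L$ is two-dimensional and the form is nondegenerate on the four-dimensional block, the central shifts $v\mapsto\omega(v,\cdot)|_L$ coming from $R_{\rm c}$ surject onto $L^*$, so every linear functional on $L$ can be cleared.

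The genuine gap is the orbit classification of those Lagrangian planes under $\rho_3(\mathrm{SL}(2,\mathbb R))$, which you explicitly defer to ``its own orbit analysis'' — but this is precisely the hardest and longest part of the paper's proof, and without it neither completeness nor inequivalence of $\mathfrak s_{2.6}$, $\mathfrak s_{2.7}$, $\mathfrak s_{2.8}^\varepsilon$ is established. Concretely, after normalizing the first generator to $\mathcal P^2+\varepsilon\mathcal P^0$ via Lemma~\ref{lem:RemarkableFP1DSubalgs}, closure leaves a residual one-parameter family $\langle\mathcal P^2+\varepsilon\mathcal P^0,\ \mathcal P^3-3\varepsilon\mathcal P^1+\alpha\mathcal P^0\rangle$, and one must show that for every real $\alpha$ this plane collapses onto $\mathfrak s_{2.7}$ or $\mathfrak s_{2.8}^\varepsilon$ rather than producing a new continuous family of inequivalent subalgebras. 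The paper does this by attaching to each line in the plane the discriminant of the associated binary cubic, counting the resulting ``singular lines'' with multiplicity as an $\mathrm{SL}(2,\mathbb R)$-invariant of the plane, and evaluating the sign pattern of the quartic resolvent invariants $\delta=-\tfrac1{432}(\alpha^2+16\varepsilon)^4$ and $L=-\tfrac\varepsilon{12}(\alpha^2+16\varepsilon)^2$: one gets two simple singular lines (hence $\mathfrak s_{2.8}^\varepsilon$) unless $\varepsilon=-1$ and $\alpha=\pm4$, where one simple and one triple singular line force equivalence to $\mathfrak s_{2.7}$. Some such invariant-theoretic argument is unavoidable, and your proposal stops exactly where it is needed; the same remark applies, less severely, to the coupled normalization of $q$ and $Z$ in the $d'=1$ cases, which you flag but do not resolve.
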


\begin{proof}
A complete set of $F$-inequivalent $d'$-dimensional subalgebras of~$\mathfrak f$ with $d'\leqslant2$
consists of the subalgebras
$\langle\mathcal P^t,\mathcal D\rangle$,
$\langle\mathcal P^t\rangle$,
$\langle\mathcal D\rangle$,
$\langle\mathcal P^t+\mathcal K\rangle$ and~$\{0\}$.
Therefore, without loss of generality, we can consider only two-dimensional subalgebras of~$\mathfrak g^{\rm ess}$
with basis vector fields of the general form $Q_i=\hat Q_i+\check Q_i$, $i=1,2$,
where
\[
(\hat Q_1,\hat Q_2)\in\big\{(\mathcal P^t,\mathcal D),\,(\mathcal P^t,0),\,(\mathcal D,0),\,(\mathcal P^t+\mathcal K,0),\,(0,0)\big\},
\]
and $\check Q_i=a_{i3}\mathcal P^3+3a_{i2}\mathcal P^2+3a_{i1}\mathcal P^1+a_{i0}\mathcal P^0+b_i\mathcal I$
with real constants $a_{ij}$ and~$b_i$, $i=1,2$, $j=0,\dots,3$.
We further need to simplify $\check Q_i$ by
linearly recombining the vector fields~$Q_i$ and
acting on them by elements from~$G^{\rm ess}$,
while simultaneously maintaining the closedness of the subalgebras with respect to the Lie bracket,
$[Q_1,Q_2]\in\langle Q_1,Q_2\rangle$, under the condition $\dim\langle Q_1,Q_2\rangle=2$
and preserving~$\hat Q_i$.

We separately consider each of the values of $(\hat Q_1,\hat Q_2)$ in the above set.

\medskip\par\noindent
$\boldsymbol{(\mathcal P^t,\mathcal D).}$
Then according to Lemma~\ref{lem:RemarkableFP1DSubalgs},
up to the $G^{\rm ess}$-equivalence,
the vector field $Q_1$ is equal to either $\mathcal P^t+\mathcal P^3$ or $\mathcal P^t+\delta\mathcal I$.
The first value is not appropriate, since then
$[Q_1,Q_2]=2\mathcal P^t-3\mathcal P^3+\tilde Q$ with $\tilde Q\in\langle\mathcal P^2,\mathcal P^1,\mathcal P^0,\mathcal I\rangle$
and thus $[Q_1,Q_2]\notin\langle Q_1,Q_2\rangle$ for any $\check Q_2\in\mathfrak r$.
For the second value, requiring the condition $[Q_1,Q_2]\in\langle Q_1,Q_2\rangle$ implies
$a_{23}=a_{22}=a_{21}=\delta=0$.
Acting by $\mathscr P^0(a_{20})_*$ on $Q_2$, we can set $a_{20}=0$.
Thus, we obtain the subalgebra~$\mathfrak s_{2.1}^\mu$ with $\mu=b_2$.

\medskip\par
In all the other cases, we have $\hat Q_2=0$.
If, in addition, $a_{2j}=0$, $j=0,\dots,3$, then $Q_2=\mathcal I$,
and thus the basis elements~$Q_1$ and~$Q_2$ commute.
The corresponding canonical forms of~$Q^1$ modulo the $G^{\rm ess}$-equivalence
are given by the basis elements of the one-dimensional algebras~$\mathfrak s_{1.1}$,~\dots,~$\mathfrak s_{1.7}$
from Lemma~\ref{lem:RemarkableFP1DSubalgs}
up to neglecting the summands with~$\mathcal I$
due to the possibility of linearly combining~$Q_1$ with~$Q_2$.
This results in the subalgebras $\mathfrak s_{2.9}$, \dots, $\mathfrak s_{2.15}$.
Hence below we assume that $(a_{23},a_{22},a_{21},a_{20})\ne(0,0,0,0)$
and, moreover, $\mathop{\rm rank}(a_{ij})^{i=1,2}_{j=0,\dots,3}=2$ if $\hat Q_1=\hat Q_2=0$.

\medskip\par\noindent
$\boldsymbol{(\mathcal P^t,0).}$
The closedness of $\langle Q_1,Q_2\rangle$ with respect to the Lie bracket implies $a_{23}=a_{22}=a_{21}=a_{13}a_{20}=0$
and thus $a_{13}=0$ since then $a_{20}\ne0$.
Hence $Q_1=\mathcal P^t+b_1\mathcal I$ and $Q_2=\mathcal P^0+b_2\mathcal I$.

If $b_2=0$, we can rescale $b_1$ to~$\delta$ by combining $\mathscr D(\epsilon)_*$
with scaling the entire vector field~$Q^1$.
This results in the subalgebras $\mathfrak s_{2.2}^\delta$.

Let $b_2\ne0$.
Simultaneously acting by~$\mathscr D(\epsilon)_*$ with the suitable value of $\epsilon$, rescaling~$b_2$
and, if $b_2<0$, acting by~$\mathscr J_*$ and alternating the sign of~$Q^2$,
we can set $b_2=1$.
Then the pushforward $\mathscr P^1(b_1)_*$ preserves~$Q^2$,
and $\mathscr P^1(b_1)_*Q_1=\mathcal P^t+b_1 Q^2$.
After linearly combining~$Q_1$ with~$Q^2$, we obtain the subalgebras $\mathfrak s_{2.3}$.

\medskip\par\noindent
$\boldsymbol{(\mathcal D,0).}$
In view of Lemma~\ref{lem:RemarkableFP1DSubalgs},
we can reduce $Q_1$ to $\mathcal D+\mu\mathcal I$ modulo the $G^{\rm ess}$-equivalence.
The condition $[Q_1,Q_2]\in\langle Q_1,Q_2\rangle$
and the commutation relations of~$\mathcal D$ with basis elements of~$\mathfrak r$ imply
that up to scaling of~$Q^2$, we have
\[
(a_{23},3a_{22},3a_{21},a_{20},b_2)\in\{(1,0,0,0,0),(0,1,0,0,0),(0,0,1,0,0),(0,0,0,1,0)\},
\]
where the first and the second values are reduced by~$\mathscr K'_*$ to the third and the fourth values, respectively.
Thus, we obtain the subalgebras~$\mathfrak s_{2.4}^\mu$ and~$\mathfrak s_{2.5}^\mu$.

\medskip\par\noindent
$\boldsymbol{(\mathcal P^t+\mathcal K,0).}$
This pair is not appropriate since, from the closedness of $\langle Q_1,Q_2\rangle$ with respect to the Lie bracket,
we straightforwardly derive $a_{2j}=0$, $j=0,\dots,3$.

\medskip\par\noindent
$\boldsymbol{(0,0).}$
Using Lemma~\ref{lem:RemarkableFP1DSubalgs}, from the very beginning we can set
$Q_1\in\{\mathcal P^2+\varepsilon\mathcal P^0,\mathcal P^1,\mathcal P^0\}$.
Since $\mathop{\rm rank}(a_{ij})^{i=1,2}_{j=0,\dots,3}=2$ here,
simultaneously with $b_1=0$ we can set $b_2=0$ using the suitable pushforward $\mathscr P^1(\epsilon)_*$.

\newcommand{\Discr}{\mathop{\rm Discr}}
For $Q=a_3\mathcal P^3+3a_2\mathcal P^2+3a_1\mathcal P^1+a_0\mathcal P^0$ with $\bar a:=(a_0,a_1,a_2,a_3)\ne(0,0,0,0)$,
we denote
\[
\Discr(Q):=a_0^2a_3^2-6a_0a_1a_2a_3+4a_0a_2^3-3a_1^2a_2^2+4a_1^3a_3.
\]
The association with cubic binary forms implies that
$Q\in\{\mathcal P^0,\mathcal P^1\}$ modulo the $G^{\rm ess}$-equivalence if and only if $\Discr(Q)=0$,
cf.\ \cite[Example~2.22]{olve1999A}.
To distinguish inequivalent subalgebras in the present classification case,
we will consider lines in the space run by~$\bar a$, where $\Discr(Q)=0$.
We call such lines singular.
It is obvious that two-dimensional subalgebras in $\langle\mathcal P^0,\mathcal P^1,\mathcal P^2,\mathcal P^3\rangle$
are $G^{\rm ess}$-inequivalent if they possess different numbers of singular lines associated with their elements
or, more precisely, different sets of multiplicities of such singular lines.

If $Q_1=\mathcal P^0$, then the condition $[Q_1,Q_2]\in\langle Q_1,Q_2\rangle$ implies $a_{23}=0$,
and we can set $a_{20}=0$ at any step by linearly combining~$Q_2$ with~$Q_1$.

The case $a_{22}=0$ corresponds to the subalgebra~$\mathfrak s_{2.6}$.
$\Discr(Q)=0$ for any $Q\in\mathfrak s_{2.6}$,
i.e., the subalgebra~$\mathfrak s_{2.6}$ possesses an infinite number of singular lines.

For $a_{22}\ne0$, we divide~$Q_2$ by $a_{22}$,
act by $\mathscr P^t(\epsilon)_*$ with $\epsilon=a_{21}/(2 a_{22})$
and reset $a_{20}=0$, linearly combining~$Q_2$ with~$Q_1$,
which gives the basis of the subalgebra~$\mathfrak s_{2.7}$.
For an arbitrary vector field $Q=a_0\mathcal P^0+3a_2\mathcal P^2$ in $\mathfrak s_{2.7}$,
we obtain $\Discr(Q)=4a_0a_2^{\,3}$,
and thus the subalgebra~$\mathfrak s_{2.7}$ possesses
one singular line of multiplicity one and
one singular line of multiplicity three.

If $Q_1=\mathcal P^1$, then the condition $[Q_1,Q_2]\in\langle Q_1,Q_2\rangle$ implies $a_{22}=0$,
and we can set $a_{21}=0$ at any step, linearly combining~$Q_2$ with~$Q_1$.
We can assume $a_{23}\ne0$ since otherwise we again obtain the subalgebra~$\mathfrak s_{2.6}$.
Rescaling the basis element~$Q_2$, we set $a_{23}=1$.
The subalgebra $\langle\mathcal P^1,\mathcal P^3\rangle$ is mapped by~$\mathscr K'_*$ to the subalgebra~$\mathfrak s_{2.7}$.
This is why the coefficient~$a_{20}$ should be nonzero as well.
The action by $\mathscr D(\epsilon)_*$ with the suitable value of~$\epsilon$ allows us to set $a_{20}=\varepsilon$,
i.e., we derive the subalgebras~$\mathfrak s_{2.8}^\varepsilon$.
For an arbitrary vector field $Q=3a_1\mathcal P^1+a_3(\mathcal P^3+\varepsilon\mathcal P^0)$ in $\mathfrak s_{2.8}^\varepsilon$,
we obtain $\Discr(Q)=a_3(4a_1^3+a_3^3)$.
This means that each of the subalgebras~$\mathfrak s_{2.8}^\varepsilon$ possesses two singular line of multiplicity one.

The consideration of the case $Q_1=\mathcal P^2+\varepsilon\mathcal P^0$ is the most complicated.
The span $\langle Q_1,Q_2\rangle$ is closed with respect to the Lie bracket if and only if $a_{21}=-\varepsilon a_{23}$.
The coefficient~$a_{22}$ is set to zero by linearly combining~$Q_2$ with~$Q_1$.
If $a_{23}=0$, then $a_{21}=0$ as well, and we again obtain the subalgebra~$\mathfrak s_{2.7}$.
Let then further $a_{23}\ne0$.
We scale $Q_2$ to set $a_{23}=1$,
which leads to $Q_2=\mathcal P^3-3\varepsilon\mathcal P^1+\alpha\mathcal P^0$ with $\alpha:=a_{20}$.
We intend to show that depending on the value of~$(\varepsilon,\alpha)$,
the subalgebra $\mathfrak s:=\langle Q_1,Q_2\rangle$ is $G^{\rm ess}$-equivalent to
either the subalgebra~$\mathfrak s_{2.7}$ or the subalgebra~$\mathfrak s_{2.8}$.

We compute that
\[
\Discr(Q)=12\varepsilon a_2^{\,4}+4\alpha a_3a_2^{\,3}+24a_3^{\,2}a_2^{\,2}
+12\varepsilon\alpha a_3^{\,3}a_2+a_3^{\,4}\alpha^2-4a_3^{\,4}\varepsilon
\]
for a nonzero vector field $Q=3a_2Q_1+a_3Q_2$ in $\mathfrak s$.
If $a_3=0$ and thus $a_2\ne0$, then $\Discr(Q)=12\varepsilon a_2^{\,4}\ne0$,
i.e., the corresponding line is not singular.
Hence we can further set $a_3\ne0$ and assume without loss of generality that $a_3=1$.
Thus, finding singular lines associated with the subalgebra~$\mathfrak s$ reduces
to solving the quartic equation $\Discr(Q)=0$ with respect to $a_2$.
The substitution $a_2=z-\varepsilon\alpha/12$ and the division of the left-hand side by $12\varepsilon$ reduce this equation
to its monic depressed counterpart $z^4+pz^2+qz+r=0$, where
\[
p:=-\frac1{24}\alpha^2+2\varepsilon,\quad
q:=\frac\varepsilon{216}\alpha^3+\frac23\alpha,\quad
r:=-\frac1{6912}\alpha^4+\frac\varepsilon{72}\alpha^2-\frac13.
\]
To find the number of (real) roots of this equation, following~\cite[p.~53]{arno1988a}
we compute
\begin{gather*}
\delta:=256r^3-128p^2r^2+144pq^2r+16p^4r-27q^4-4p^3q^2=
-\frac1{432}(\alpha^2+16\varepsilon)^4,\\
L:=8pr-9q^2-2p^3=-\frac\varepsilon{12}(\alpha^2+16\varepsilon)^2.
\end{gather*}

Thus, there are only two cases for the number of roots of the equation $\Discr(Q)=0$.
Note that in both cases, this number is not zero.
Therefore, as indicated above, the subalgebra~$\mathfrak s$ contains,
modulo the $G^{\rm ess}$-equivalence, at least one of the vector fields~$\mathcal P^0$ and~$\mathcal P^1$,
and thus we can use the considerations of the cases where $Q_1=\mathcal P^0$ or  $Q_1=\mathcal P^1$.
This is why the recognition of the corresponding representatives among subalgebras listed in the lemma's statement
depends only on the root structure of the equation $\Discr(Q)=0$.

1. If either $\varepsilon=-1$ and $\alpha\ne\pm4$ or $\varepsilon=1$,
then $\delta<0$, and hence the equation $\Discr(Q)=0$
has two roots of multiplicity one, which corresponds to
two singular lines of multiplicity one for the subalgebra~$\mathfrak s$.
Therefore, this algebra is $G^{\rm ess}$-equivalent to one of the subalgebras~$\mathfrak s_{2.8}^\varepsilon$.

2. If $\varepsilon=-1$ and $\alpha=\pm4$,
then $\delta=L=0$ and $p<0$, and thus the equation $\Discr(Q)=0$
has one root of multiplicity one and one root of multiplicity three, which corresponds to
one singular line of multiplicity one and one singular line of multiplicity three for the subalgebra~$\mathfrak s$.
Hence this algebra is $G^{\rm ess}$-equivalent to the subalgebra~$\mathfrak s_{2.7}$.
\end{proof}

We can also classify one and two-dimensional subalgebras of the entire algebra~$\mathfrak g$ up to the $G$-equivalence
and show that only subalgebras of~$\mathfrak g^{\rm ess}$ are essential
in the course of classifying Lie reductions of the equation~\eqref{eq:RemarkableFP}.
Thus, according to the decomposition $G=G^{\rm ess}\ltimes G^{\rm lin}$ (see Section~\ref{sec:RemarkableFPPointSymGroup}),
an arbitrary transformation~$\Phi$ from $G$ can be represented in the form $\Phi=\mathscr F\circ\mathscr Z(f)$,
where $\mathscr F\in G^{\rm ess}$ and $\mathscr Z(f)\in G^{\rm lin}$.
To exhaustively describe the adjoint action of~$G$ on the algebra~$\mathfrak g$,
in view of the decomposition $\mathfrak g=\mathfrak g^{\rm ess}\lsemioplus\mathfrak g^{\rm lin}$
it suffices to supplement the adjoint action of~$G^{\rm ess}$ on~$\mathfrak g^{\rm ess}$
with the adjoint actions of $G^{\rm ess}$ on $\mathfrak g^{\rm lin}$ and of $G^{\rm lin}$ on $\mathfrak g^{\rm ess}$,
$\mathscr Z(f)_*Q=Q+Q[f]\p_u$ and
$\mathscr F_*\mathcal Z(f)=\mathcal Z(\mathscr F_*f)$,
whereas the adjoint action of $G^{\rm lin}$ on $\mathfrak g^{\rm lin}$ is trivial.
Here $Q$ is an arbitrary vector field from~$\mathfrak g^{\rm ess}$,
$\mathcal Q[f]$ denotes the evaluation of the characteristic~$Q[u]$ of~$Q$ at $u=f$,
and $\mathscr F$ is an arbitrary transformation from~$G^{\rm ess}$.

The classification of subalgebras of~$\mathfrak g$ is based on the classification of subalgebras of~$\mathfrak g^{\rm ess}$.
This is due to the fact that
subalgebras $\mathfrak s_1$ and $\mathfrak s_2$ of $\mathfrak g$ are definitely $G$-inequivalent if
$\pi_{\mathfrak g^{\rm ess}}\mathfrak s_1$ and $\pi_{\mathfrak g^{\rm ess}}\mathfrak s_2$ are $G^{\rm ess}$-inequivalent.
Here $\pi_{\mathfrak g^{\rm ess}}$ denotes the natural projection of $\mathfrak g$
onto $\mathfrak g^{\rm ess}$ under the decomposition $\mathfrak g=\mathfrak g^{\rm ess}\dotplus\mathfrak g^{\rm lin}$
in the sense of vector spaces.
In the course of classifying the subalgebras~$\mathfrak s$ of~$\mathfrak g$ of a fixed (finite) dimension,
we partition the set of these subalgebras into subsets such that
each subset consists of the subalgebras with the same (up to the $G^{\rm ess}$-equivalence)
projection $\pi_{\mathfrak g^{\rm ess}}\mathfrak s$.
As a result, each of these subsets contains no subalgebra being equivalent to a subalgebra from another subset.

The classification of one- and two-dimensional subalgebras of~$\mathfrak g$ up to the $G$-equivalence
is given in the following two assertions.

\begin{lemma}\label{lem:RemarkableFP1DSubalgsOfEntireG}
A complete list of $G$-inequivalent one-dimensional subalgebras of $\mathfrak g$ consists of
the one-dimensional subalgebras of~$\mathfrak g^{\rm ess}$ listed in Lemma~\ref{lem:RemarkableFP1DSubalgs}
and the subalgebras of the form $\langle\mathcal Z(f)\rangle$,
where the function~$f$ belongs to a fixed complete set of $G^{\rm ess}$-inequivalent nonzero solutions
of the equation~\eqref{eq:RemarkableFP}.
\end{lemma}

\begin{proof}
Consider an arbitrary one-dimensional subalgebra~$\mathfrak s$ of~$\mathfrak g$.
Let $Q$ be its basis element.
We can represent~$Q$ as $Q=\hat Q+\mathcal Z(f)$
for some $\hat Q\in\mathfrak g^{\rm ess}$ and some solution~$f$ of the equation~\eqref{eq:RemarkableFP}.

If $\hat Q\ne0$, then Lemma~\ref{lem:RemarkableFP1DSubalgs} implies that, modulo the $G^{\rm ess}$-equivalence,
the vector field~$\hat Q$ can be assumed to be
the basis element of one of the subalgebras $\mathfrak s_{1.1}$, \dots, $\mathfrak s_{1.8}$ of~$\mathfrak g^{\rm ess}$.
Then we set $f$ identically equal to zero
by pushing $Q$ forward with~$\mathscr Z(h)$, where $h$ is a solution of the equation~\eqref{eq:RemarkableFP}
that in addition satisfies the constraint $\hat Q[h]+f=0$.%
\footnote{%
For $\hat Q=\mathcal I$, the constraint $\hat Q[h]+f=0$ just means $h=-f$.
For the other values of~$\hat Q$ and locally analytical solutions~$f$ of the equation~\eqref{eq:RemarkableFP},
the existence of locally analytical solutions of the system $h_t+xh_y=h_{xx}$, $\hat Q[h]+f=0$ with respect to~$h$
follows from Riquier's theorem and the fact that $\hat Q$ is a Lie symmetry vector field of~\eqref{eq:RemarkableFP}.
We suppose this existence in the smooth case as well.
In fact, the proof of this existence reduces to the existence of a solution of linear inhomogeneous partial differential equation
in two independent variables, whose homogeneous counterpart coincides with a reduction of the equation~\eqref{eq:RemarkableFP}
with respect to $\langle\hat Q\rangle$.
}

If $\hat Q=0$, then $Q=\mathcal Z(f)$.
It suffices to note that the $G$- and $G^{\rm ess}$-equivalences coincide on~$\mathfrak g^{\rm lin}$.
\end{proof}

\begin{lemma}\label{lem:RemarkableFP2DSubalgsOfEntireG}
A complete list of $G$-inequivalent two-dimensional subalgebras of $\mathfrak g$ consists of
\begin{enumerate}\itemsep=0.5ex
\item
the two-dimensional subalgebras of~$\mathfrak g^{\rm ess}$ listed in Lemma~\ref{lem:RemarkableFP2DSubalgs},
\item
the subalgebras of the form $\langle\hat Q,\mathcal Z(f)\rangle$, where
$\hat Q$ is the basis element of one of the one-dimensional subalgebras $\mathfrak s_{1.1}$, \dots, $\mathfrak s_{1.7}$
of~$\mathfrak g^{\rm ess}$ listed in Lemma~\ref{lem:RemarkableFP1DSubalgs},
and the function~$f$ belongs to a fixed complete set
of ${\rm St}_{G^{\rm ess}}(\langle\hat Q\rangle)$-inequivalent nonzero $\langle\hat Q+\lambda\mathcal I\rangle$-invariant solutions
of the equation~\eqref{eq:RemarkableFP}
with ${\rm St}_{G^{\rm ess}}(\langle\hat Q\rangle)$ denoting the stabilizer subgroup of~$G^{\rm ess}$
with respect to $\langle\hat Q\rangle$ under the action of~$G^{\rm ess}$ on~$\mathfrak g^{\rm ess}$
and with
$\lambda\in\{0,1\}$ if $\hat Q\in\{\mathcal P^1,\mathcal P^0\}$,
$\lambda\in\{-1,0,1\}$ if $\hat Q=\mathcal P^t$,
$\lambda\geqslant0$ if $\hat Q\in\{\mathcal D,\mathcal P^2+\varepsilon\mathcal P^0\}$
and $\lambda\in\mathbb R$ otherwise,
\item
the subalgebras of the form $\langle\mathcal I,\mathcal Z(f)\rangle$,
where the function~$f$ belongs to a fixed complete set of $G^{\rm ess}$-inequivalent nonzero solutions
of the equation~\eqref{eq:RemarkableFP}, and
\item
the subalgebras of the form $\langle\mathcal Z(f^1),\mathcal Z(f^2)\rangle$, where
the function pair~$(f^1,f^2)$ belongs to a fixed complete set
of $G^{\rm ess}$-inequivalent (up to linearly recombining components) pairs of linearly independent solutions
of the equation~\eqref{eq:RemarkableFP}.
\end{enumerate}
\end{lemma}

\begin{proof}
Consider an arbitrary two-dimensional subalgebra $\mathfrak s$ of~$\mathfrak g$,
and let $Q_1$ and~$Q_2$ be its basis elements,
$\mathfrak s=\langle Q_1,Q_2\rangle$ with $[Q_1,Q_2]\in\mathfrak s$.
Due to the decomposition $\mathfrak g=\mathfrak g^{\rm ess}\lsemioplus\mathfrak g^{\rm lin}$,
each $Q_i$, $i=1,2$, can be represented in the form $Q_i=\hat Q_i+\mathcal Z(f^i)$,
where $\hat Q_i\in\mathfrak g^{\rm ess}$ and the function $f^i$ is a solution of the equation~\eqref{eq:RemarkableFP}.
As the principal $G$-invariant value in the course of classifying two-dimensional subalgebras of~$\mathfrak g$,
we choose the dimension $d:=\dim\pi_{\mathfrak g^{\rm ess}}\mathfrak s$
of the projection of~$\mathfrak s$ onto~$\mathfrak g^{\rm ess}$.

\medskip\par\noindent
$\boldsymbol{d=2.}$
Up to the $G^{\rm ess}$-equivalence,
the pair $(\hat Q_1,\hat Q_2)$ can be assumed to be the chosen basis of one of the subalgebras
$\mathfrak s^{\mu}_{2.1}$, \dots, $\mathfrak s_{2.15}$ of $\mathfrak g^{\rm ess}$
listed in Lemma~\ref{lem:RemarkableFP2DSubalgs}.

For the subalgebras $\mathfrak s_{2.9}$, \dots, $\mathfrak s_{2.15}$, we have $Q_2=\mathcal I$.
Pushing $\mathfrak s$ forward with~$\mathscr Z(h)$, where $h=-f^2$,
we can set $f^2=0$.
Then the commutation relation $[Q_1,Q_2]=0$ implies $f^1=0$.

For the subalgebras $\mathfrak s_{2.1}^\mu$, \dots, $\mathfrak s_{2.8}^\varepsilon$,
let $\kappa_1$ and~$\kappa_2$ be the constants such that $[\hat Q_1,\hat Q_2]=\kappa_1\hat Q_1+\kappa_2\hat Q_2$.
Then the functions~$f^1$ and~$f^2$ satisfy the constraint $\hat Q_1[f^2]-\hat Q_2[f^1]=\kappa_1f^1+\kappa_2f^2$.
Hence we can set them to be identically zero using the pushforward~$\mathscr Z(h)_*$,
where~$h$ is a solution of the overdetermined system
\[h_t+xh_y=h_{xx},\quad \hat Q_1[h]+f^1=0,\quad \hat Q_2[h]+f^2=0\]
with respect to~$h$.%
\footnote{%
Similarly to the proof of Lemma~\ref{lem:RemarkableFP1DSubalgsOfEntireG},
for locally analytical solutions~$f^1$ and~$f^2$ of the equation~\eqref{eq:RemarkableFP}
that satisfy the above constraint,
the existence of locally analytical solutions of the last system
follows from Riquier's theorem and the fact that $\hat Q_1$ and $\hat Q_2$ are Lie symmetry vector fields of~\eqref{eq:RemarkableFP},
and we assume this existence in the smooth case as well.
}

In total, this results in the first family of subalgebras from the lemma's statement.

\medskip\par\noindent
$\boldsymbol{d=1.}$
Without loss of generality, we can assume $\hat Q_2=0$ up to linearly recombining the basis elements~$Q_1$ and~$Q_2$
and thus $f^2\ne0$,
whereas modulo the $G^{\rm ess}$-equivalence, the vector field~$\hat Q_1$
can be assumed to be the basis element of one of the subalgebras $\mathfrak s_{1.1}$, \dots, $\mathfrak s_{1.8}$ of~$\mathfrak g^{\rm ess}$ that are listed in Lemma~\ref{lem:RemarkableFP1DSubalgs}.
Then Lemma~\ref{lem:RemarkableFP1DSubalgs} also implies that $f^1=0$ up to the $G^{\rm ess}$-equivalence.

For the subalgebras $\mathfrak s_{1.1}$, \dots, $\mathfrak s_{1.7}$, we have $\hat Q_1\notin\langle\mathcal I\rangle$.
Following the proof of Lemma~\ref{lem:RemarkableFP1DSubalgsOfEntireG}, we can set the function~$f^1$ to be identically zero.
Then the closedness of the subalgebra $\mathfrak s$ with respect to the Lie bracket implies the constraint $\hat Q_1[f^2]=\lambda f^2$.
In other words, the function~$f^2$ is a $\langle\hat Q_1+\lambda\mathcal I\rangle$-invariant solution of the equation~\eqref{eq:RemarkableFP}.
Transformations from the group ${\rm St}_{G^{\rm ess}}(\langle\hat Q_1\rangle)$ allow us
to set, depending on the value of~$\hat Q_1$, the restrictions on~$\lambda$
that are presented in item~2 of the lemma's statement.
The transformations from ${\rm St}_{G^{\rm ess}}(\langle\hat Q_1\rangle)$ that preserve~$\lambda$
also induce an equivalence relation on the corresponding set run by~$f^2$.

Since $\hat Q_1=\mathcal I$ for the subalgebra~$\mathfrak s_{1.8}$,
in the way described in the proof of Lemma~\ref{lem:RemarkableFP1DSubalgsOfEntireG},
we can set the function~$f^1$ to be identically zero.
Then the only commutation relation of the algebra $\mathfrak s$ is $[Q_1,Q_2]=Q_2$,
which implies no constraint for the function~$f^2$, and ${\rm St}_{G^{\rm ess}}(\langle\mathcal I\rangle)=G^{\rm ess}$.
Hence in this case the parameter function~$f^2$ should run through
a fixed complete set of $G^{\rm ess}$-inequivalent nonzero solutions
of the equation~\eqref{eq:RemarkableFP}.

Thus, we obtained the second and third families of subalgebras listed in the lemma's statement.

\medskip\par\noindent
$\boldsymbol{d=0.}$
This case is obvious since then $\hat Q_1=\hat Q_2=0$.
In other words, the subalgebra $\langle Q_1,Q_2\rangle$ is of the form $\langle\mathcal Z(f^1),\mathcal Z(f^2)\rangle$,
where~$f^1$ and~$f^2$ are linearly independent solutions of the equation~\eqref{eq:RemarkableFP},
which should be chosen modulo the $G^{\rm ess}$-equivalence and linearly recombining.
\end{proof}

\section{Lie reductions of codimension one}\label{sec:RemarkableFPLieRedCoD1}

The classification of Lie reductions of the equation~\eqref{eq:RemarkableFP}
to partial differential equations with two independent variables is based on the classification
of one-dimensional subalgebras of the algebra~$\mathfrak g^{\rm ess}$,
which is given in Lemma~\ref{lem:RemarkableFP1DSubalgs}.
In Table~\ref{tab:LieReductionsOfCodim1}, for each of the subalgebras listed therein,
we present an associated ansatz for $u$ and the corresponding reduced equation.
Throughout this section, the subscripts~1 and~2 of functions depending on $(z_1,z_2)$
denote derivatives with respect to~$z_1$ and~$z_2$, respectively.
In this and the next sections,
we mark Lie reductions and all related objects with complex labels $d.i^*$,
where $d$, $i$ and~$*$ are the dimension of the corresponding subalgebra,
its number in the list of $d$-dimensional subalgebras of the algebra~$\mathfrak g^{\rm ess}$,
which is given in Lemma~\ref{lem:RemarkableFP1DSubalgs} for $d=1$
or in Lemma~\ref{lem:RemarkableFP2DSubalgs} for $d=2$,
and the list of subalgebra parameters for a subalgebra family, respectively.
We omit the superscript in the label when it is not essential.

\begin{table}[!ht]\footnotesize
\begin{center}
\caption{$G$-inequivalent Lie reductions of codimension one}
\label{tab:LieReductionsOfCodim1}${ }$\\[-1ex]
\renewcommand{\arraystretch}{1.5}
\begin{tabular}{|l|c|c|c|l|}
\hline
no.			     & $u$                           			 & $z_1$                  				 & $z_2$                    			    & \hfil Reduced equation\\
\hline
1.1 		     & ${\rm e}^{\frac3{10}t(t^4-5tx+10y)}w$        & $y-\frac14t^4$        				 & $x-t^3$                  				 & $z_2w_1=w_{22}-3z_1w$ \\
1.2$^\delta$     & ${\rm e}^{\delta t}w$         			    & $y$                    				 & $x$                      				 & $z_2w_1=w_{22}-\delta w$ \\
1.3$^\nu$	     & $|t|^{\frac12\nu-1}w$                	    & $|t|^{-\frac32}y$      			     & $|t|^{-\frac12}x$        			    & $(2z_2-3\varepsilon'z_1)w_1=2w_{22}+\varepsilon'z_2w_2-\varepsilon'(\nu\!-\!2)w$\\
1.4$^\mu$	     & $\dfrac{{\rm e}^{-\psi(t,x,y,\mu)}}{t^2+1}w$ & $\dfrac y{(t^2+1)^{\frac32}}$ 		 & $\dfrac{(t^2+1)x-3ty}{(t^2+1)^{\frac32}}$& $z_2w_1=3z_1w_2+w_{22}+(z_2^{\,2}+\mu)w$ \\
1.5$^\varepsilon$& $|t|^{-\frac12}{\rm e}^{-\frac{x^2}{4t}}w$   & $\tfrac13{t^3}+2\varepsilon t-t^{-1}$ & $2y-(t+\varepsilon t^{-1})x$  			 & $w_1=w_{22}$ \\
1.6 		     & $w$                                          & $\frac13t^3$             			 & $y-tx$                   				 & $w_1=w_{22}$ \\
1.7 		     & $w$                                          & $t$                    				 & $x$                      				 & $w_1=w_{22}$ \\
\hline
\end{tabular}
\end{center}
Here
$\varepsilon':=\sgn t$,
$\varepsilon\in\{-1,1\}$,
$\delta\in\{-1,0,1\}$,
$w=w(z_1,z_2)$ is the new unknown function of the new independent variables $(z_1,z_2)$, and
\[
\psi(t,x,y,\mu)=\dfrac{3t^3y^2+t(2x(t^2+1)-3ty)^2}{4(t^2+1)^3}+\mu\arctan t.
\]
\end{table}

Each of the reduced equations~$1.i$ presented in Table~\ref{tab:LieReductionsOfCodim1},
$i\in\{1,2^\delta,3^\nu,4^\mu,5^\varepsilon,6,7\}$,
is a linear homogeneous partial differential equation in two independent variables.
Hence its maximal Lie invariance algebra~$\mathfrak g_{1.i}$ contains the infinity-dimensional abelian ideal $\{h(z_1,z_2)\p_w\}$,
which is associated with the linear superposition of solutions of reduced equation~$1.i$
and thus assumed to be a trivial part of~$\mathfrak g_{1.i}$.
Here the parameter function $h=h(z_1,z_2)$ runs through the solution set of reduced equation~$1.i$.
Moreover, the algebra~$\mathfrak g_{1.i}$ is the semidirect sum of the above ideal and a finite-dimensional subalgebra~$\mathfrak a_i$
called the essential Lie invariance algebra of reduced equation~$1.i$, cf.\ Section~\ref{sec:RemarkableFPMIA}.
The algebras~$\mathfrak a_i$ are the following:
\begin{gather*}
\mathfrak a_1=\mathfrak a_3=\mathfrak a_4=\langle w\p_w\rangle,\\[.8ex]
\mathfrak a_2^0=\langle\p_{z_1},\,3z_1\p_{z_1}\!+z_2\p_{z_2},\,9z_1^2\p_{z_1}\!+6z_1z_2\p_{z_2}\!-(6z_1+z_2^3)w\p_w,\,w\p_w\rangle,\\[.8ex]
\mathfrak a_2^\delta=\langle\p_{z_1},\,w\p_w\rangle\quad\mbox{if}\quad\delta\ne0,\\[.8ex]
\mathfrak a_5=\mathfrak a_6=\mathfrak a_7=\langle\p_{z_1},\,\p_{z_2},\,2z_1\p_{z_1}\!+z_2\p_{z_2},\,2z_1\p_{z_2}\!-z_2w\p_w,\\
\phantom{\mathfrak a_5=\mathfrak a_6=\mathfrak a_7=\langle}
4z_1^2\p_{z_1}\!+4z_1z_2\p_{z_2}\!-(2z_1+z_2^2)w\p_w,\,w\p_w\rangle.
\end{gather*}

The equation~\eqref{eq:RemarkableFP} admits hidden symmetries
with respect to a Lie reduction if the corresponding reduced equation possesses Lie symmetries
that are not induced by Lie symmetries of the original equation~\eqref{eq:RemarkableFP}.
To check which Lie symmetries of reduced equation~$1.i$ are induced by Lie symmetries of~\eqref{eq:RemarkableFP},
we compute the normalizer of the subalgebra~$\mathfrak s_{1.i}$ in the algebra $\mathfrak g^{\rm ess}$:
\begin{gather*}
{\rm N}_{\mathfrak g^{\rm ess}}(\mathfrak s_{1.1})=\langle\mathcal P^t+\mathcal P^3, \mathcal I\rangle,\ \;
{\rm N}_{\mathfrak g^{\rm ess}}(\mathfrak s_{1.2}^0)=\langle\mathcal P^t,\mathcal D,\mathcal P^0,\mathcal I\rangle,\ \;
{\rm N}_{\mathfrak g^{\rm ess}}(\mathfrak s_{1.2}^\delta)=\langle\mathcal P^t,\mathcal P^0,\mathcal I\rangle\,\ \mbox{if}\,\ \delta\ne0,
\\[.8ex]
{\rm N}_{\mathfrak g^{\rm ess}}(\mathfrak s_{1.3}^\nu)=\langle\mathcal D,\mathcal I\rangle,\ \;
{\rm N}_{\mathfrak g^{\rm ess}}(\mathfrak s_{1.4}^\mu)=\langle\mathcal P^t+\mathcal K,\mathcal I\rangle,\ \;
{\rm N}_{\mathfrak g^{\rm ess}}(\mathfrak s_{1.5}^\varepsilon)=\langle\mathcal P^2,\mathcal P^0,\mathcal P^3-3\varepsilon\mathcal P^1,\mathcal I\rangle,
\\[.8ex]
{\rm N}_{\mathfrak g^{\rm ess}}(\mathfrak s_{1.6})=\langle\mathcal D,\mathcal P^3,\mathcal P^1,\mathcal P^0,\mathcal I\rangle,\ \;
{\rm N}_{\mathfrak g^{\rm ess}}(\mathfrak s_{1.7})=\langle\mathcal P^t,\mathcal D,\mathcal P^2,\mathcal P^1,\mathcal P^0,\mathcal I\rangle.
\end{gather*}
The algebra of induced Lie symmetries of reduced equation $1.i$ is isomorphic to
the quotient algebra ${\rm N}_{\mathfrak g^{\rm ess}}(\mathfrak s_{1.i})/\mathfrak s_{1.i}$.
Thus, all Lie symmetries of the reduced equation~$1.i$ are induced by Lie symmetries of the original equation~\eqref{eq:RemarkableFP}
if and only if $\dim\mathfrak a_i=\dim{\rm N}_{\mathfrak g^{\rm ess}}(\mathfrak s_{1.i})-1$.
Comparing the dimensions of ${\rm N}_{\mathfrak g^{\rm ess}}(\mathfrak s_{1.i})$ and $\mathfrak a_i$, we conclude
that all Lie symmetries of reduced equations~$1.1$, $1.3$ and~$1.4$ are induced by Lie symmetries of~\eqref{eq:RemarkableFP}.
The subalgebras of induced symmetries in the algebras $\mathfrak a_2^\delta$ and $\mathfrak a_5$, \dots, $\mathfrak a_7$ are
\begin{gather*}
\tilde{\mathfrak a}_2^0=\langle\p_{z_1},\,3z_1\p_{z_1}\!+z_2\p_{z_2},\,w\p_w\rangle,\quad 
\tilde{\mathfrak a}_2^\delta=\langle\p_{z_1},\,w\p_w\rangle\ \ \mbox{if}\ \ \delta\ne0,\ \\[1ex]
\tilde{\mathfrak a}_5=\langle\p_{z_2},\,2z_1\p_{z_2}\!-z_2w\p_w,\,w\p_w\rangle,\\[1ex] 
\tilde{\mathfrak a}_6=\langle\p_{z_2},\,2z_1\p_{z_1}\!+z_2\p_{z_2},\,2z_1\p_{z_2}\!-z_2w\p_w,\,w\p_w\rangle,\\[1ex]  
\tilde{\mathfrak a}_7=\langle\p_{z_1},\,\p_{z_2},\,2z_1\p_{z_1}\!+z_2\p_{z_2},\,2z_1\p_{z_2}\!-z_2w\p_w,\,w\p_w\rangle. 
\end{gather*}
For each $i\in\{2^0,5,6,7\}$, the elements of the complement $\mathfrak a_i\setminus\tilde{\mathfrak a}_i$ are nontrivial hidden symmetries
of the equation~\eqref{eq:RemarkableFP} that are associated with reduction $1.i$.
See Appendix~\ref{sec:HiddenSyms} on an optimized procedure for constructing exact solutions using hidden symmetries.

We discuss reduced equations~1.1--1.7
and present solutions of~\eqref{eq:RemarkableFP} that can be found using the known solutions of the reduced equations.

Using point transformations, we can further map reduced equations~1.1--1.4
to (1+1)-dimen\-sional linear heat equations with potentials,
which are of the general form $w_1=w_{22}+Vw$, where $V$ is an arbitrary function of $(z_1,z_2)$.
These transformations and mapped equations are
\begin{itemize}
\item[1.1.\,]
$\tilde z_1=\tfrac94\tilde\varepsilon z_1,\ \ \tilde z_2=|z_2|^{\frac32},\ \ \tilde w=|z_2|^{\frac14}w$
\ with \ $\tilde\varepsilon:=\sgn z_2$:\quad
$\tilde w_1=\tilde w_{22}+\big(\tfrac5{36}\tilde z_2^{-2}-\tfrac{16}{27}\tilde\varepsilon\tilde z_1\tilde z_2^{-\frac23}\big)\tilde w$;
\item[1.2$\lefteqn{^\delta\!.}\phantom{.}\,$]
$\tilde z_1=\tfrac94\tilde\varepsilon z_1,\ \ \tilde z_2=|z_2|^{\frac32},\ \ \tilde w=|z_2|^{\frac14}w$
\ with \ $\tilde\varepsilon:=\sgn z_2$:\quad
$\tilde w_1=\tilde w_{22}+\big(\tfrac5{36}\tilde z_2^{-2}-\tfrac49\delta \tilde z_2^{-\frac23}\big)\tilde w$;
\item[1.3\lefteqn{^\nu\!.}\phantom{.}\,]
$\tilde z_1=\tfrac94\tilde\varepsilon z_1,\ \
\tilde z_2=|z_2-\tfrac32\varepsilon'z_1|^{\frac32},\ \
\tilde w=|2z_2-3\varepsilon'z_1|^{\frac14}{\rm e}^{\frac18z_2(4\varepsilon'z_2-9z_1)}w$
 \ with \ $\tilde\varepsilon:=\sgn(z_2-\tfrac32\varepsilon'z_1)$:
\\[.5ex] 
$\tilde w_1=\tilde w_{22}+\big(
\tfrac5{36}\tilde z_2^{-2}
-(\tfrac1{81}\tilde z_1^2+\tfrac29\varepsilon'\nu)\tilde z_2^{-\frac23}
-\tfrac5{18}\tilde z_2^{\frac23}-\tfrac{10}{27}\varepsilon'\tilde z_1
\big)\tilde w$;
\item[1.4\lefteqn{^\mu\!.}\phantom{.}\,]
$\tilde z_1=\tfrac94\tilde\varepsilon z_1,\ \ \tilde z_2=|z_2|^{\frac32},\ \ \tilde w=|z_2|^{\frac14}{\rm e}^{\frac32z_1z_2}w$ \ with \ $\tilde\varepsilon:=\sgn z_2$:
\\[.5ex]
$\tilde w_1=\tilde w_{22}+\big(\tfrac5{36}\tilde z_2^{-2}
-(\tilde z_1^2-\tfrac49\mu)\tilde z_2^{-\frac23}+\tfrac{10}9\tilde z_2^{\frac23}\big)\tilde w$.
\end{itemize}

The essential Lie invariance algebras of reduced equations~1.1, 1.3 and~1.4 are trivial,
and thus the further consideration of these equations within the classical framework gives no results.

Mapped reduced equation~1.2$^0$ coincides with the linear heat equation with potential $V=\mu z_2^{-2}$, where $\mu=\tfrac5{36}$.
Hence we construct the following family of solutions of the equation~\eqref{eq:RemarkableFP}:
\begin{gather}\label{eq:RemarkableFPs120InvSols}
\solution u=|x|^{-\frac14}\theta^\mu(\tfrac94\tilde\varepsilon y,|x|^{\frac32})\ \ \mbox{with}\ \ \mu=\tfrac5{36},
\end{gather}
where $\theta^\mu=\theta^\mu(z_1,z_2)$ is an arbitrary solution of the equation $\theta^\mu_1=\theta^\mu_{22}+\mu z_2^{-2}\theta^\mu$,
and $\tilde\varepsilon:=\sgn x$.
A~complete collection of inequivalent Lie invariant solutions of equations of such form with general $\mu\ne0$
is presented in Appendix~\ref{sec:SymHeatEqSquarePot}.

The essential Lie invariance algebra of reduced equation~1.2$^\delta$ with $\delta\ne0$ is induced
by the subalgebra $\langle\mathcal P^0,\mathcal I\rangle$ of~$\mathfrak g^{\rm ess}$.
The subalgebras of~$\mathfrak a_2^\delta$ that are appropriate for Lie reduction of reduced equation~1.2$^\delta$
are exhausted by the subalgebras of the form $\langle\p_{z_1}\!+\nu w\p_w\rangle$,
which are respectively induced by the subalgebras $\langle\mathcal P^0+\nu \mathcal I\rangle$ of~$\mathfrak g^{\rm ess}$.
This is why the family of solutions of reduced equation~1.2$^\delta$
that are invariant with respect to $\langle\p_{z_1}\!+\nu w\p_w\rangle$ gives
a family of solutions of the equation~\eqref{eq:RemarkableFP} that is contained,
up to the $G^{\rm ess}$-equivalence, in the family of $\langle\mathcal P^0\rangle$-invariant solutions,
see reduction~1.7.

Each of reduced equations 1.5--1.7 itself coincides with the classical (1+1)-dimensional linear heat equation,
which leads to the following families of solutions of the equation~\eqref{eq:RemarkableFP}:
\begin{gather*}
\solution u=|t|^{-\frac12}{\rm e}^{-\frac{x^2}{4t}}\theta(z_1,z_2),\ \
\mbox{where}\ \
z_1=\tfrac13{t^3}+2\varepsilon t-t^{-1},\ \
z_2=2y-(t+\varepsilon t^{-1})x,
\\[.5ex]
\solution u=\theta(z_1,z_2),\ \
\mbox{where}\ \
z_1=\tfrac13t^3,\ \
z_2=y-tx,
\\[.5ex]
\solution u=\theta(z_1,z_2),\ \
\mbox{where}\ \
z_1=t,\ \
z_2=x.
\end{gather*}
Here $\theta=\theta(z_1,z_2)$ is an arbitrary solution of the (1+1)-dimensional linear heat equation, $\theta_1=\theta_{22}$.
An enhanced complete collection of inequivalent Lie invariant solutions of this equation
was presented in~\cite[Section A]{vane2021a}, following Examples 3.3 and 3.17 in~\cite{olve1993A}.
These solutions exhaust, up to combining the $G^{\rm ess}$-equivalence and linear superposition with each other,
the set of known exact solutions of this equation
that are expressed in a closed form in terms of elementary and special functions.

\section{Lie reductions of codimension two}\label{sec:RemarkableFPLieRedCoD2}

Based on the list of $G^{\rm ess}$-inequivalent two-dimensional subalgebras from Lemma~\ref{lem:RemarkableFP2DSubalgs},
we carry out the exhaustive classification of codimension-two Lie reductions of the equation~\eqref{eq:RemarkableFP}.
In Table~\ref{tab:LieReductionsOfCodim2}, for each of these subalgebras,
we present an associated ansatz for $u$ and the corresponding reduced ordinary differential equation.

\begin{table}[!ht]\footnotesize
\begin{center}
\caption{$G$-inequivalent Lie reductions of codimension two}
\label{tab:LieReductionsOfCodim2}
${ }$\\[-1ex]
\renewcommand{\arraystretch}{1.6}
\begin{tabular}{|l|c|c|l|l|}
\hline
\!no.\!         & $u$                                          		  	 & $\omega$            		& \hfil Reduced equation \\
\hline
2.1$^\mu$        &$|y|^{\frac13(\mu-2)}\varphi$                   & $y^{-1}x^3$            & $9\omega\varphi_{\omega\omega}+(\omega+6)\varphi_\omega=\frac13(\mu-2)\varphi$\\
2.2$^\delta$     &${\rm e}^{\delta t}\varphi$                     & $x$                    & $\varphi_{\omega\omega}=\delta\varphi$ \\
2.3              &${\rm e}^y\varphi$                              & $x$                    & $\varphi_{\omega\omega}=\omega\varphi$ \\
2.4$^\mu$        &$|t|^{\frac12\mu-1}\varphi$  					  & $|t|^{-\frac32}(y-tx)$ & $2\varphi_{\omega\omega}=\varepsilon'(\mu-2)\varphi-3\varepsilon'\omega\varphi_\omega$ \\
2.5$^\mu$        &$|t|^{\frac12\mu-1}\varphi$                  	  & $|t|^{-\frac12}x$ 	   & $2\varphi_{\omega\omega}=\varepsilon'(\mu-2)\varphi-\varepsilon'\omega\varphi_\omega$\\
2.6              &$\varphi$                                       & $t$  				   & $\varphi_{\omega}=0$ \\
2.7              &${\rm e}^{-\frac{x^2}{4t}}\varphi$              & $t$  				   & $2\omega\varphi_\omega+\varphi=0$  \\
2.8$^\varepsilon$&${\rm e}^{-\frac32\frac{(y-tx)^2}{2t^3-\varepsilon}}\varphi$& $t$  	   & $(2\omega^3-\varepsilon)\varphi_\omega+3\omega^2\varphi=0$\\
\hline
\end{tabular}\\[2ex]
Here $\varepsilon':=\sgn t$, $\varphi=\varphi(\omega)$ is the new unknown function of the invariant independent variable~$\omega$.
\end{center}
\end{table}

In fact, the codimension-two Lie reductions are superfluous. 
More specifically, since $\mathfrak s_{1.2}^0\subset\mathfrak s_{2.1}^\mu$ for any~$\mu$, 
all $\mathfrak s_{2.1}^\mu$-invariant solutions of the equation~\eqref{eq:RemarkableFP} 
are $\mathfrak  s_{1.2}^0$-invariant and thus are of the form~\eqref{eq:RemarkableFPs120InvSols}.
The rest of the reductions only lead to solutions of~\eqref{eq:RemarkableFP},
each of which is $G^{\rm ess}$-equivalent to a solution of form 1.6 or~1.7 given in Section~\ref{sec:RemarkableFPLieRedCoD1}.
Indeed, each of the corresponding subalgebras $\mathfrak s_{2.2}$, \dots, $\mathfrak s_{2.8}$
contains, up to the $G^{\rm ess}$-equivalence for the subalgebra $\mathfrak s_{2.3}$,
at least one of the vector fields~$\mathcal P_0$ and~$\mathcal P_1$.
Nevertheless, we discuss the inequivalent codimension-two Lie reductions below
in order to explicitly present solutions of the equation~\eqref{eq:RemarkableFP}
that are invariant with respect to two-dimensional algebras of Lie-symmetry vector fields of this equation.

We find (nonzero) exact solutions of the constructed reduced equations
and present the associated exact solutions of the equation~\eqref{eq:RemarkableFP}.
Hereafter, $C_0$, $C_1$ and $C_2$ are arbitrary constants.

\medskip\par\noindent
{\bf 2.1}. 
The substitution $\varphi=|\omega|^{\frac13}{\rm e}^{-\frac{\omega}9}\tilde\varphi(\omega)$
maps reduced equation~2.1 to a Kummer's equation
\begin{gather*}
9\omega\tilde\varphi_{\omega\omega}+(12-\omega)\tilde{\varphi}_{\omega}=\tfrac13(\mu+1)\tilde\varphi,
\end{gather*}
whose general solution is
\[
\tilde\varphi=C_1M\big(\tfrac13(\mu+1),\tfrac43,\tfrac19\omega\big)+C_2 U\big(\tfrac13(\mu+1),\tfrac43,\tfrac19\omega\big),
\]
where $M(a,b,z)$ and $U(a,b,z)$ denote the standard solutions of general Kummer's equation $z\varphi_{zz}+(b-z)\varphi_z-a\varphi=0$.
After taking $\kappa:=\frac13(\mu+1)$ instead of~$\mu$ as an arbitrary constant parameter,
the corresponding family of particular solutions of the equation~\eqref{eq:RemarkableFP} can be represented in the form
\begin{gather*}
\solution u=x|y|^{\kappa-\frac43}{\rm e}^{-\tilde\omega}
\Big(C_1M\big(\kappa,\tfrac43,\tilde\omega\big)+C_2U\big(\kappa,\tfrac43,\tilde\omega\big)\Big)\quad\mbox{with}\quad\tilde\omega:=\tfrac19y^{-1}x^3.
\end{gather*}
It also admits the equivalent representation
\begin{gather*}
\solution u=x^{-1}|y|^{\kappa-\frac23}{\rm e}^{-\frac{\tilde\omega}2}
\Big(C_1M_{\frac23-\kappa,\frac16}(\tilde\omega)+C_2W_{\frac23-\kappa,\frac16}(\tilde\omega)\Big)\quad\mbox{with}\quad\tilde\omega:=\tfrac19y^{-1}x^3
\end{gather*}
in terms of the Whittaker functions~$M_{a,b}(z)$ and~$W_{a,b}(z)$,
which constitute the fundamental solution set of the Whittaker equation~\eqref{eq:Whittaker},
due to a relation of these functions to Kummer's functions,
\begin{gather*}
M_{a,b}(z):={\rm e}^{-\frac z2}z^{b+\frac12}M\big(b-a+\tfrac12,1+2b,z\big),\\[.5ex]
W_{a,b}(z):={\rm e}^{-\frac z2}z^{b+\frac12}U\big(b-a+\tfrac12,1+2b,z\big).
\end{gather*}

\noindent
{\bf 2.2$^\delta$}. Reduced equation 2.2$^\delta$ trivially integrates to
\begin{gather*}
\varphi=
\begin{cases}
C_1{\rm e}^\omega+C_2{\rm e}^{-\omega}\quad&\mbox{if}\ \ \delta=1,\\
C_1\omega+C_2\quad&\mbox{if}\ \ \delta=0,\\
C_1\sin\omega+C_2\cos\omega\quad&\mbox{if}\ \ \delta=-1.
\end{cases}
\end{gather*}

\noindent
{\bf 2.3}.
Reduced equation 2.3 is the Airy equation, whose general solution is
$\varphi=C_1\mathop{\rm Ai}(\omega)+C_2\mathop{\rm Bi}(\omega)$.

\medskip

\noindent
{\bf 2.4, 2.5}.
Substituting
$\varphi=\omega{\rm e}^{-\frac34\varepsilon'\omega^2}\tilde\varphi(\omega)$ and
$\varphi=\omega{\rm e}^{-\frac14\varepsilon'\omega^2}\tilde\varphi(\omega)$
into reduced equations~2.4 and~2.5, respectively,
we obtain the equations
\begin{gather*}
2\omega\tilde\varphi_{\omega\omega}+(4-3\varepsilon'\omega^2)\tilde\varphi_\omega=\varepsilon'(\mu+4)\omega\tilde\varphi,\\
2\omega\tilde\varphi_{\omega\omega}+(4-\varepsilon'\omega^2)\tilde\varphi_{\omega}=\varepsilon'\mu\omega\tilde\varphi,
\end{gather*}
whose general solutions are
\begin{gather*}
\tilde\varphi=C_1M\big(\tfrac\mu6+\tfrac23,\tfrac32,\tfrac34\varepsilon'\omega^2\big)
+C_2 U\big(\tfrac\mu6+\tfrac23,\tfrac32,\tfrac34\varepsilon'\omega^2\big),
\\[1ex]
\tilde\varphi=C_1M\big(\tfrac\mu2,\tfrac32,\tfrac14\varepsilon'\omega^2\big)
+C_2 U\big(\tfrac\mu2,\tfrac32,\tfrac14\varepsilon'\omega^2\big).
\end{gather*}

\noindent
{\bf 2.6--2.8}.
Here the reduced equations are ordinary first-order differential equations, which are trivially integrated to
\begin{gather*}
{2.6.}\ \varphi=C_0,\quad
{2.7.}\ \varphi=\frac{C_0}{\sqrt{|\omega|}}\,,\quad
{2.8.}\ \varphi=\frac{C_0}{\sqrt{|2\omega^3-\varepsilon|}}\,.
\end{gather*}

Substituting the above general solutions of reduced equations~2.2--2.8 into the corresponding ansatzes,
we construct the following families of particular solutions of the equation~\eqref{eq:RemarkableFP},
which are contained (as mentioned earlier, up to the $G^{\rm ess}$-equivalence for the fourth family)
in the solution families obtained in Section~\ref{sec:RemarkableFPLieRedCoD1}
using Lie reductions~1.6 and~1.7:
\begin{gather*}
\solution u=C_1{\rm e}^{t+x}+C_2{\rm e}^{t-x},\qquad\hspace{3ex}
\solution u=C_1x+C_2,\qquad\hspace{3ex}
\solution u=C_1{\rm e}^{-t}\sin x+C_2{\rm e}^{-t}\cos x,
\\
\solution u=C_1{\rm e}^y\mathop{\rm Ai}(x)+C_2{\rm e}^y\mathop{\rm Bi}(x),
\\
\solution u=|t|^{\frac{\mu-5}2}(y-tx){\rm e}^{-\tilde\omega}
\Big(C_1M\big(\tfrac\mu6+\tfrac23,\tfrac32,\tilde\omega\big)+
C_2U\big(\tfrac\mu6+\tfrac23,\tfrac32,\tilde\omega\big)\Big)
\ \ \mbox{with}\ \ \tilde\omega:=\tfrac34t^{-3}(y-tx)^2,
\\
\solution u=|t|^{\frac{\mu-3}2}x{\rm e}^{-\tilde\omega}
\Big(C_1M\big(\tfrac\mu2,\tfrac32,\tilde\omega\big)+
C_2U\big(\tfrac\mu2,\tfrac32,\tilde\omega\big)\Big)
\ \ \mbox{with}\ \ \tilde\omega:=\tfrac14t^{-1}x^2,
\\
\solution u=C_0,\qquad\hspace{3ex}
\solution u=\frac{C_0}{\sqrt{|t|}}{\rm e}^{-\frac{x^2}{4t}},\qquad\hspace{3ex}
\solution u=\frac{C_0}{\sqrt{|2t^3-\varepsilon|}}{\rm e}^{-\frac32\frac{(y-tx)^2}{2t^3-\varepsilon}}.
\end{gather*}

Considering nonzero solutions from this section up to the $G^{\rm ess}$-equivalence,
we can set the constant~$C_0$ and one of the constants $C_1$ and~$C_2$ to one.
Taking into account the linear superposition principle for solutions of the equation~\eqref{eq:RemarkableFP},
it suffices to present only linearly independent solutions.

\section{Lie reductions to algebraic equations}\label{sec:RemarkableFPLieRedCoD3}

As for an equation in three independent variables,
the maximal codimension of Lie reductions for the equation~\eqref{eq:RemarkableFP}
is equal to three, and such reductions lead to algebraic equations.
Let us show that these reductions do not give new explicit solutions of the equation~\eqref{eq:RemarkableFP}.

Any three-dimensional subalgebra of the algebra~$\mathfrak g$
that is appropriate for Lie reduction is $G$-equivalent to a subalgebra~$\mathfrak s$ of~$\mathfrak g^{\rm ess}$.
If the subalgebra~$\mathfrak s$ nontrivially intersects the radical~$\mathfrak r$ of~$\mathfrak g^{\rm ess}$,
then any $\mathfrak s$-invariant solution of~\eqref{eq:RemarkableFP} is $G^{\rm ess}$-equivalent
to a solution from one of families~1.5--1.7 constructed in Section~\ref{sec:RemarkableFPLieRedCoD1}.
Otherwise, the natural projection of the subalgebra~$\mathfrak s$ onto
the Levi factor~$\mathfrak f=\langle\mathcal P^t,\mathcal D,\mathcal K\rangle$
under the Levi decomposition $\mathfrak g^{\rm ess}=\mathfrak f\lsemioplus\mathfrak r$
is three-dimensional.
Therefore, the subalgebra~$\mathfrak s$ is nonsolvable, and thus it is simple,
i.e., it is a Levi factor of the algebra~$\mathfrak g^{\rm ess}$.
According to the Levi--Malcev theorem, stating that any two Levi factors of a finite-dimensional Lie algebra
are conjugate by an inner automorphism generated by elements of the corresponding nilradical,
the subalgebra~$\mathfrak s$ is $G^{\rm ess}$-equivalent to~$\mathfrak f$.
An ansatz constructed using the subalgebra~$\mathfrak f$
\begin{gather*}
\solution u=C_0y^{-\frac23}{\rm e}^{-\frac{x^3}{9y}},
\end{gather*}
where $C_0$ plays the role of the unknown constant,
identically satisfies the equation~\eqref{eq:RemarkableFP}.
Up to the $G^{\rm ess}$-equivalence,
we can set the constant~$C_0$ equal to one.
This solution belongs to the family of particular solutions
obtained with reduction~2.1$^\mu$ in Section~\ref{sec:RemarkableFPLieRedCoD2},
where $\mu=0$ or, equivalently, $\kappa=1/3$,
since
$U\big(\tfrac13,\tfrac43,z\big)=z^{-\frac13}$.
Hence it also belongs to the solution family~\eqref{eq:RemarkableFPs120InvSols}.

\section{Generalized reductions and generating solutions\\with symmetry operators}\label{sec:GenReductions}

We can construct more general families of exact solutions of the equation~\eqref{eq:RemarkableFP} using generalized symmetries.
Since the equation~\eqref{eq:RemarkableFP} is linear and homogeneous,
each first-order linear differential operator in total derivatives that is associated with an essential Lie symmetry of this equation
is its recursion operator.
Thus, it is obvious that the equation~\eqref{eq:RemarkableFP}
admits, in particular, the generalized vector fields $\big((\mathfrak P^2+\varepsilon\mathfrak P^0)^nu\big)\p_u$, $\big((\mathfrak P^1)^nu\big)\p_u$
and $\big((\mathfrak P^0)^nu\big)\p_u$ as its generalized symmetries.
Here and in what follows,
\begin{gather*}
\mathfrak P^3:=3t^2\mathrm D_x+t^3\mathrm D_y-3(y-tx),\ \
\mathfrak P^2:=2t\mathrm D_x+t^2\mathrm D_y+x,\ \
\mathfrak P^1:=\mathrm D_x+t\mathrm D_y,\ \
\mathfrak P^0:=\mathrm D_y
\end{gather*}
are the differential operators in total derivatives
that are associated with the Lie-symmetry vector fields~$-\mathcal P^3$, $-\mathcal P^2$, $-\mathcal P^1$ and~$-\mathcal P^0$
of the equation~\eqref{eq:RemarkableFP},
and $\mathrm D_x$ and $\mathrm D_y$ denote the operators of total derivatives with respect to~$x$ and~$y$, respectively.
The corresponding invariant solutions are constructed by means of solving the equation~\eqref{eq:RemarkableFP}
with the associated invariant surface condition, $(\mathfrak P^2+\varepsilon\mathfrak P^0)^nu=0$, $(\mathfrak P^1)^nu=0$ or $(\mathfrak P^0)^nu=0$, respectively.
For $n=2$, it is easy to derive the representation for such solutions in terms of solutions of the (1+1)-dimensional linear heat equation,
\begin{gather*}
\solution u=|t|^{-\frac12}{\rm e}^{-\frac{x^2}{4t}}\big(
(t-\varepsilon t^{-1})\theta^1_2-\tfrac12t^{-1}x\theta^1+\theta^0
\big),\\
\mbox{where}\ \
z_1=\tfrac13t^3+2\varepsilon t-t^{-1},\ \
z_2=2y-(t+\varepsilon t^{-1})x,
\\[1ex]
\solution u=x\theta^1-t^2\theta^1_2+\theta^0,\ \
\mbox{where}\ \
z_1=\tfrac13t^3,\ \
z_2=y-tx,
\\[1ex]
\solution u=4y\theta^1_{22}+x^2\theta^1_2-x\theta^1+\theta^0,\ \
\mbox{where}\ \
z_1=t,\ \
z_2=x.
\end{gather*}
Here $\theta^i=\theta^i(z_1,z_2)$ is an arbitrary solution of the (1+1)-dimensional linear heat equation, $\theta^i_1=\theta^i_{22}$, $i=0,1$.
As in Section~\ref{sec:RemarkableFPLieRedCoD1}, the subscripts~1 and~2 of functions depending on $(z_1,z_2)$
denote derivatives with respect to~$z_1$ and~$z_2$, respectively.

In fact, for each of the above generalized symmetries,
we can construct the corresponding family of invariant solutions
using algebraic tools and the structure of the algebra~$\mathfrak g$,
or, more specifically, of its radical~$\mathfrak r$.
Since
$[\mathcal P^2+\varepsilon\mathcal P^0,\mathcal P^1]=\mathcal I$,
$[\mathcal P^1,\mathcal P^2]=-\mathcal I$ and
$[\mathcal P^0,\mathcal P^3]= 3\mathcal I$, then
\begin{gather}\label{eq:CommConsequences}
\begin{split}&
(\mathfrak P^2+\varepsilon\mathfrak P^0)^n(\mathfrak P^1)^{n-1}\big(|t|^{-\frac12}{\rm e}^{-\frac{x^2}{4t}}\theta(z_1,z_2)\big)=0,\\[1ex]&
(\mathfrak P^1)^n(\mathfrak P^2)^{n-1}\big(\theta(z_1,z_2)\big)=0,\\[1ex]&
(\mathfrak P^0)^n(\mathfrak P^3)^{n-1}\big(\theta(z_1,z_2)\big)=0
\end{split}
\end{gather}
for arbitrary solutions $\theta=\theta(z_1,z_2)$ of the (1+1)-dimensional linear heat equation, $\theta_1=\theta_{22}$,
whereas the expression in the left-hand side of each equality in~\eqref{eq:CommConsequences}
in general becomes nonvanishing after replacing the $n$th power of the first operator by its $(n-1)$th power.
The expressions for the variables~$z_1$ and~$z_2$ should be taken from the corresponding item in the above list.
Recall that the operators~$\mathfrak P^0$, \dots,~$\mathfrak P^3$ preserve the solution set of the equation~\eqref{eq:RemarkableFP}.
Therefore, the families of its solutions that are invariant with respect to its generalized symmetries
$\big((\mathfrak P^2+\varepsilon\mathfrak P^0)^nu\big)\p_u$, $\big((\mathfrak P^1)^nu\big)\p_u$
or $\big((\mathfrak P^0)^nu\big)\p_u$ respectively take the form
\begin{gather*}
\solution u=\sum_{k=0}^{n-1}(\mathfrak P^1)^k\big(|t|^{-\frac12}{\rm e}^{-\frac{x^2}{4t}}\theta^k(z_1,z_2)\big)
\quad\mbox{with}\quad
z_1=\tfrac13t^3+2\varepsilon t-t^{-1},\ \
z_2=2y-(t+\varepsilon t^{-1})x,
\\
\solution u=\sum_{k=0}^{n-1}(\mathfrak P^2)^k\theta^k(z_1,z_2)
\quad\mbox{with}\quad
z_1=\tfrac13t^3,\ \
z_2=y-tx,
\\
\solution u=\sum_{k=0}^{n-1}(\mathfrak P^3)^k\theta^k(z_1,z_2)
\quad\mbox{with}\quad
z_1=t,\ \
z_2=x,
\end{gather*}
where $\theta^k=\theta^k(z_1,z_2)$ are arbitrary solutions of the (1+1)-dimensional linear heat equation,
$\theta^k_1=\theta^k_{22}$, $k=0,\dots,n-1$.
Note that for $n=2$, the first two families in this list have exactly the same representations
as the first two families in the previous list,
and
\[\mathfrak P^3\tilde\theta^1+\tilde\theta^0=4y\theta^1_{22}+x^2\theta^1_2-x\theta^1+\theta^0\]
if $\tilde\theta^1=-\tfrac43\theta^1_1$ and
$\tilde\theta^0=\theta^0+4t^2\theta^1_{12}+4tx\theta^1_{22}+(x^2-2t)\theta^1_2-2t\theta^1_2-x\theta^1$.

\noprint{
\begin{gather*}
\mathfrak P^3\tilde\theta^1+\tilde\theta^0=3t^2\tilde\theta^1_x-3(y-tx)\tilde\theta^1+\tilde\theta^0
=|\tilde\theta^1=-\tfrac43\theta^1_1|=-4t^2\theta^1_{12}+4(y-tx)\theta^1_1+\tilde\theta^0 \\
=4y\theta^1_{22}+x^2\theta^1_2-x\theta^1+\theta^0,
\\\mbox{hence}\quad
\tilde\theta^0=\theta^0+(4t^2\theta^1_{12}+4tx\theta^1_{22}+(x^2-2t)\theta^1_2)-(2t\theta^1_2+x\theta^1)
\end{gather*}
}

Up to the $G^{\rm ess}$-equivalence, the generalized vector fields
$\big((\mathfrak P^2+\varepsilon\mathfrak P^0)^nu\big)\p_u$,
$\big((\mathfrak P^1)^nu\big)\p_u$ and
$\big((\mathfrak P^0)^nu\big)\p_u$
exhaust those generalized symmetries of the equation~\eqref{eq:RemarkableFP}
that are obtained using powers of operators associated with vector fields from~$\mathfrak r$.
It is much more challenging to use vector fields from $\mathfrak g\setminus\mathfrak r$
for generalized reductions in a similar way.
The question of whether operators associated with vector fields from $\mathfrak g\setminus\mathfrak r$
can generate solutions that are $G^{\rm ess}$-inequivalent to the above solutions
needs a careful analysis.

\section{On Kramers equations}\label{sec:KramersEq}

An important subclass of the class~$\bar{\mathcal F}$ of (1+2)-dimensional ultraparabolic Fokker--Planck equations,
which are of the general form~\eqref{eq:Fokker_Planck_superclass},
is the class~$\mathcal K$ of Kramers equations (more commonly called the Klein--Kramers equations)
\[
u_t+xu_y=F(y)u_x+\gamma(xu+u_x)_x.
\]
These equations describe the evolution of the probability density function $u(t,x,y)$ of a Brownian particle in the phase space $(y,x)$
in one spatial dimension.
Here the variables~$t$, $y$ and~$x$ play the role of the time, the position and the momentum, respectively,
$F$~is an arbitrary smooth function of $y$ that is the derivative of the external potential with respect to~$y$,
and $\gamma$~is an arbitrary nonzero constant, which is related to the friction coefficient.
Up to a simple scale equivalence of Kramers equations,
without loss of generality, one can assume $\gamma=1$.
The subclass~$\mathcal K$ is singled out from the class~$\bar{\mathcal F}$
by the auxiliary differential constraints $C=0$, $A^0=A^1_x=A^2$, $A^0_t=A^0_x=A^0_y=0$ and~$A^1_t=0$.

A preliminary group classification of the class~$\mathcal K$ was presented in~\cite{spic1997a}.
In particular, it was proved that the essential Lie invariance algebra of an equation from the class $\mathcal K$
is eight-dimensional if and only if $F(y)=ky$ up to shifts of~$y$, where $k=-\tfrac34\gamma^2$ or $k=\tfrac3{16}\gamma^2$.

Any equation from the class~$\bar{\mathcal F}$ with eight-dimensional essential Lie symmetry algebra
is $\mathcal G_{\bar{\mathcal F}}$-equivalent to the remarkable Fokker--Planck equation~\eqref{eq:RemarkableFP}.
(We will present the proof of this fact as well as the complete group classification of the class~$\bar{\mathcal F}$ in a future paper.)
Therefore, there exist point transformations that map the equations
\begin{gather}
u_t+xu_y=\gamma u_{xx}+\gamma(x-\tfrac34\gamma y)u_x+\gamma u,\label{eq:Kramers34}\\
u_t+xu_y=\gamma u_{xx}+\gamma(x+\tfrac3{16}\gamma y)u_x+\gamma u \label{eq:Kramers316}
\end{gather}
to the equation~\eqref{eq:RemarkableFP}.

Bases of the essential Lie invariance algebras~$\mathfrak g^{\rm ess}_{\eqref{eq:Kramers34}}$ and~$\mathfrak g^{\rm ess}_{\eqref{eq:Kramers316}}$
of the equations~\eqref{eq:Kramers34} and~\eqref{eq:Kramers316} consist of the vector fields
\begin{gather*}
\hat{\mathcal P}^t={\rm e}^{-\gamma t}\left(\tfrac1\gamma\p_t-\tfrac32y\p_y+\tfrac12(3\gamma y-x)\p_x
-\big(\tfrac14(x+\tfrac32\gamma y)^2-\tfrac32\big)u\p_u\right),\quad
\hat{\mathcal D}=\tfrac2\gamma\p_t+u\p_u,\\
\hat{\mathcal K}={\rm e}^{\gamma t}\left(\tfrac1\gamma\p_t+\tfrac32y\p_y+\tfrac12(3\gamma y+x)\p_x
-\big(\tfrac34(x+\tfrac32\gamma y)(x-\tfrac12\gamma y)+\tfrac12\big)u\p_u\right),\\
\hat{\mathcal P}^3={\rm e}^{\frac32\gamma t}\left(\tfrac1\gamma\p_y+\tfrac32\p_x-\tfrac32(x-\tfrac12\gamma y)u\p_u\right),\quad
\hat{\mathcal P}^2={\rm e}^{\frac12\gamma t}\left(\tfrac1\gamma\p_y+\tfrac12\p_x\right),\\
\hat{\mathcal P}^1={\rm e}^{-\frac12\gamma t}\left(\tfrac1\gamma\p_y-\tfrac12\p_x+\tfrac12(x+\tfrac32\gamma y)u\p_u\right),\quad
\hat{\mathcal P}^0={\rm e}^{-\frac32\gamma t}\left(\tfrac1\gamma\p_y-\tfrac32\p_x\right),\quad
\hat{\mathcal I}=u\p_u\\
\hspace{-\mathindent}\mbox{and}\\
\check{\mathcal P}^t={\rm e}^{-\frac12\gamma t}\left(\tfrac1\gamma\p_t-\tfrac34y\p_y-\tfrac14(x-\tfrac32\gamma y)\p_x+u\p_u\right),\quad
\check{\mathcal D}=\tfrac4\gamma\p_t+2u\p_u,\\
\check{\mathcal K}={\rm e}^{\frac12\gamma t}\left(\tfrac4\gamma\p_t+3y\p_y+(x+\tfrac32\gamma y)\p_x-(x+\tfrac34\gamma y)^2u\p_u\right),\\
\check{\mathcal P}^3={\rm e}^{\frac34\gamma t}\left(\tfrac8\gamma\p_y+6\p_x-6(x+\tfrac14\gamma y)u\p_u\right),\quad
\check{\mathcal P}^2={\rm e}^{\frac14\gamma t}\left(\tfrac4\gamma\p_y+\p_x-(x+\tfrac34\gamma y)u\p_u\right),\\
\check{\mathcal P}^1={\rm e}^{-\frac14\gamma t}\left(\tfrac2\gamma\p_y-\tfrac12\p_x\right),\quad
\check{\mathcal P}^0={\rm e}^{-\frac34\gamma t}\left(\tfrac1\gamma\p_y-\tfrac34\p_x\right),\quad
\check{\mathcal I}=u\p_u,
\end{gather*}
respectively.
We choose the basis elements of the algebra~$\mathfrak g^{\rm ess}_{\eqref{eq:Kramers34}}$ (resp.\ $\mathfrak g^{\rm ess}_{\eqref{eq:Kramers316}}$)
in such a way that the commutation relations between them
coincide with those between the basis elements of the algebra~$\mathfrak g^{\rm ess}$ presented in Section~\ref{sec:RemarkableFPMIA}.
Thus, the fact that the algebras~$\mathfrak g^{\rm ess}$,
\smash{$\mathfrak g^{\rm ess}_{\eqref{eq:Kramers34}}$}
and~\smash{$\mathfrak g^{\rm ess}_{\eqref{eq:Kramers316}}$}
are isomorphic becomes obvious.

Point transformations~$\Phi_{\eqref{eq:Kramers34}}$ and~$\Phi_{\eqref{eq:Kramers316}}$
that respectively map the equations~\eqref{eq:Kramers34} and~\eqref{eq:Kramers316}
to the remarkable Fokker--Planck equation~\eqref{eq:RemarkableFP}
are constructed using the conditions that
\smash{$\Phi_{\eqref{eq:Kramers34}*}\hat{\mathcal V}=\mathcal V$} and $\Phi_{\eqref{eq:Kramers316}*}\check{\mathcal V}=\mathcal V$
for $\mathcal V\in\{\mathcal P^t, \mathcal D, \mathcal K, \mathcal P^3, \mathcal P^2, \mathcal P^1, \mathcal P^0, \mathcal I\}$.
Thus, the pushforwards~$\Phi_{\eqref{eq:Kramers34}*}$ and~$\Phi_{\eqref{eq:Kramers316}*}$ establish
the isomorphisms of the algebras~$\mathfrak g^{\rm ess}_{\eqref{eq:Kramers34}}$ and~$\mathfrak g^{\rm ess}_{\eqref{eq:Kramers316}}$
to the algebra~$\mathfrak g^{\rm ess}$,
$\Phi_{\eqref{eq:Kramers34}*}\mathfrak g^{\rm ess}_{\eqref{eq:Kramers34}}=\mathfrak g^{\rm ess}$
and
$\Phi_{\eqref{eq:Kramers316}*}\mathfrak g^{\rm ess}_{\eqref{eq:Kramers316}}=\mathfrak g^{\rm ess}$.
As a result, we obtain the point transformations

\begin{gather*}
\Phi_{\eqref{eq:Kramers34}}:\quad
\tilde t={\rm e}^{\gamma t},\quad
\tilde x={\rm e}^{\frac12\gamma t}(x+\tfrac32\gamma y),\quad
\tilde y=\gamma{\rm e}^{\frac32\gamma t}y,\quad
\tilde u={\rm e}^{-\frac14(x+\frac32\gamma y)^2-\frac32\gamma t}u,
\\[1ex]
\Phi_{\eqref{eq:Kramers316}}:\quad
\tilde t=2{\rm e}^{\frac12\gamma t},\quad
\tilde x={\rm e}^{\frac14\gamma t}(x+\tfrac34\gamma y),\quad
\tilde y=\gamma{\rm e}^{\frac34\gamma t}y,\quad
\tilde u={\rm e}^{-\gamma t}u,
\end{gather*}
where the variables with tildes correspond to the equation~\eqref{eq:RemarkableFP},
and the variables without tildes correspond to the source equation~\eqref{eq:Kramers34} or~\eqref{eq:Kramers316}.
By direct computation, we check that indeed the transformations~$\Phi_{\eqref{eq:Kramers34}}$ and~$\Phi_{\eqref{eq:Kramers316}}$
respectively map the equations~\eqref{eq:Kramers34} and~\eqref{eq:Kramers316} to the equation~\eqref{eq:RemarkableFP}.
In other words, an arbitrary solution $\tilde u=f(t,x,y)$ of~\eqref{eq:RemarkableFP}
is pulled back by~$\Phi_{\eqref{eq:Kramers34}}$ and~$\Phi_{\eqref{eq:Kramers316}}$
to the solutions $u=\Phi_{\eqref{eq:Kramers34}}^*f$ and $u=\Phi_{\eqref{eq:Kramers316}}^*f$
of the equations~\eqref{eq:Kramers34} and~\eqref{eq:Kramers316}, respectively,
\begin{gather*}
u=\Phi_{\eqref{eq:Kramers34}}^*f
={\rm e}^{\frac14(x+\frac32\gamma y)^2+\frac32\gamma t}f
\Big(
{\rm e}^{\gamma t},
{\rm e}^{\frac12\gamma t}\big(x+\tfrac32\gamma y\big),
\gamma{\rm e}^{\frac32\gamma t}y
\Big),
\\[1ex]
u=\Phi_{\eqref{eq:Kramers316}}^*f
={\rm e}^{\gamma t}f
\Big(
2{\rm e}^{\frac12\gamma t},
{\rm e}^{\frac14\gamma t}\big(x+\tfrac34\gamma y\big),
\gamma{\rm e}^{\frac34\gamma t}y
\Big).
\end{gather*}
Using the families of solutions of the equation~\eqref{eq:RemarkableFP}
that have been constructed in Sections~\ref{sec:RemarkableFPLieRedCoD1} and~\ref{sec:RemarkableFPLieRedCoD2},
we find the following families of solutions for the equation~\eqref{eq:Kramers34}:
\begin{gather*}
u={\rm e}^{\frac14(x+\frac32\gamma y)^2+\frac{11}8\gamma t}|x+\tfrac32\gamma y|^{-\frac14}\,
\theta^\mu\big(
\tfrac94\tilde\varepsilon\gamma{\rm e}^{\frac32\gamma t}y,\,
{\rm e}^{\frac34\gamma t}|x+\tfrac32\gamma y|^{\frac32}
\big) \\\qquad\text{with}\quad\mu=\tfrac5{36},\ \tilde\varepsilon:=\sgn(x+\tfrac32\gamma y),
\\[1ex]
u={\rm e}^{\gamma t}\,\theta\Big(
\tfrac13{\rm e}^{3\gamma t}+2\varepsilon{\rm e}^{\gamma t}-{\rm e}^{-\gamma t},\,
{\rm e}^{\frac32\gamma t}(\tfrac12\gamma y-x)-\varepsilon{\rm e}^{-\frac12\gamma t}(x+\tfrac32\gamma y)
\Big),
\\[1ex] 
u={\rm e}^{\frac14(x+\frac32\gamma y)^2+\frac32\gamma t}\,\theta
\big(
\tfrac13{\rm e}^{3\gamma t},\,
{\rm e}^{\frac32\gamma t}(x+\tfrac12\gamma y)
\big),
\\[1ex] 
u={\rm e}^{\frac14(x+\frac32\gamma y)^2+\frac32\gamma t}\,\theta
\big(
{\rm e}^{\gamma t},\,
{\rm e}^{\frac12\gamma t}(x+\tfrac32\gamma y)
\big),
\\[1ex]
u=\zeta|y|^{\kappa-\frac43}{\rm e}^{\frac12\tilde\zeta-\frac{x\zeta^2}{24\gamma y}+\frac32\kappa\gamma t}
\Big(C_1M\big(\kappa,\tfrac43,\tilde\zeta\big)+C_2U\big(\kappa,\tfrac43,\tilde\zeta\big)\Big)\\
\qquad\mbox{with}\quad\zeta:=2x+3\gamma y,\quad \tilde\zeta:=\tfrac1{72}(\gamma y)^{-1}\zeta^3.
\end{gather*}
In the same way, we can also obtain solutions of the equation~\eqref{eq:Kramers316},
\begin{gather*}
u={\rm e}^{\frac{15}{16}\gamma t}|x+\tfrac34\gamma y|^{-\frac14}\,\theta^\mu\Big(
\tfrac94\tilde\varepsilon\gamma{\rm e}^{\frac34\gamma t}y,\,
{\rm e}^{\frac38\gamma t}|x+\tfrac34\gamma y|^{\frac32}
\Big)\quad\text{with}\quad\mu=\tfrac5{36},\ \tilde\varepsilon:=\sgn(x+\tfrac34\gamma y),
\\[.5ex]
u={\rm e}^{-\frac18(x+\frac34\gamma y)^2+\frac34\gamma t}\,\theta\Big(
\tfrac83{\rm e}^{\frac32\gamma t}+4\varepsilon{\rm e}^{\frac12\gamma t}-\tfrac12{\rm e}^{-\frac12\gamma t},\,
{\rm e}^{\frac34\gamma t}(\tfrac12\gamma y-2x)-\tfrac\varepsilon2{\rm e}^{-\frac14\gamma t}(x+\tfrac34\gamma y)
\Big),
\\[.5ex] 
u={\rm e}^{\gamma t}\,\theta\big(
\tfrac23{\rm e}^{\frac32\gamma t},\,
{\rm e}^{\frac34\gamma t}(x+\tfrac14\gamma y)
\big),
\quad
u={\rm e}^{\gamma t}\,\theta\big(
2{\rm e}^{\frac12\gamma t},\,
{\rm e}^{\frac14\gamma t}(x+\tfrac34\gamma y)
\big),
\\[1.5ex]
u=\zeta^{1/3}|y|^{\kappa-1}{\rm e}^{-\zeta+\frac14(3\kappa+1)\gamma t}
\Big(C_1M\big(\kappa,\tfrac43,\zeta\big)+C_2U\big(\kappa,\tfrac43,\zeta\big)\Big)\\
\qquad\mbox{with}\quad\zeta:=(9\gamma y)^{-1}\big(x+\tfrac34\gamma y\big)^3.
\end{gather*}
Recall that $\theta^\mu=\theta^\mu(z_1,z_2)$ and $\theta=\theta(z_1,z_2)$ denote
arbitrary solutions
of the (1+1)-dimensional linear heat equation with potential $\mu z_2^{-2}$ and
of the (1+1)-dimensional linear heat equation, $\theta^\mu_1=\theta^\mu_{22}+\mu z_2^{-2}\theta^\mu$ and $\theta_1=\theta_{22}$, respectively.
$M(a,b,\omega)$ and $U(a,b,\omega)$ are the standard solutions of Kummer's equation $\omega\varphi_{\omega\omega}+(b-\omega)\varphi_\omega-a\varphi=0$.

Similarly to Section~\ref{sec:GenReductions},
we can also construct more general families of solutions of the equations~\eqref{eq:Kramers34} and~\eqref{eq:Kramers316},
carrying out their generalized reductions or acting by Lie-symmetry operators on known solutions.
At the same time, we can again use the transformations~$\Phi_{\eqref{eq:Kramers34}}$ and~$\Phi_{\eqref{eq:Kramers316}}$
just to pull back the solutions of the equation~\eqref{eq:RemarkableFP} that are obtained in Section~\ref{sec:GenReductions}.

Thus, the families of solutions of the equation~\eqref{eq:Kramers34}
that are invariant with respect to the generalized symmetries
$\big((\hat{\mathfrak P}^2+\varepsilon\hat{\mathfrak P}^0)^nu\big)\p_u$,
$\big((\hat{\mathfrak P}^1)^nu\big)\p_u$ or
$\big((\hat{\mathfrak P}^0)^nu\big)\p_u$
are respectively constituted by the solutions of the form
\begin{gather*}
u=\sum_{k=0}^{n-1}(\hat{\mathfrak P}^1)^k\big({\rm e}^{\gamma t}\,\theta^k(z_1,z_2)\big)
\\
\qquad\mbox{with}\quad
z_1=\tfrac13{\rm e}^{3\gamma t}+2\varepsilon{\rm e}^{\gamma t}-{\rm e}^{-\gamma t},\ \
z_2={\rm e}^{\frac32\gamma t}(\tfrac12\gamma y-x)-\varepsilon{\rm e}^{-\frac12\gamma t}(x+\tfrac32\gamma y),
\\[1ex]
u=\sum_{k=0}^{n-1}(\hat{\mathfrak P}^2)^k
\big(
{\rm e}^{\frac14(x+\frac32\gamma y)^2+\frac32\gamma t}\,
\theta^k(z_1,z_2)
\big)
\quad\mbox{with}\quad
z_1=\tfrac13{\rm e}^{3\gamma t},\ \
z_2={\rm e}^{\frac32\gamma t}(x+\tfrac12\gamma y),
\\[1ex]
u=\sum_{k=0}^{n-1}(\hat{\mathfrak P}^3)^k
\big(
{\rm e}^{\frac14(x+\frac32\gamma y)^2+\frac32\gamma t}\,
\theta^k(z_1,z_2)
\big)
\quad\mbox{with}\quad
z_1={\rm e}^{\gamma t},\ \
z_2={\rm e}^{\frac12\gamma t}(x+\tfrac32\gamma y).
\end{gather*}
Here and in what follows, $\theta^k=\theta^k(z_1,z_2)$ are arbitrary solutions of the (1+1)-dimensional linear heat equation,
$\theta^k_1=\theta^k_{22}$, $k=0,\dots,n-1$.
The differential operators in total derivatives
\begin{gather*}
\hat{\mathfrak P}^3:={\rm e}^{\frac32\gamma t}\left(\tfrac1\gamma{\rm D}_y+\tfrac32{\rm D}_x+\tfrac32(x-\tfrac12\gamma y)\right),\\[1ex]
\hat{\mathfrak P}^2:={\rm e}^{\frac12\gamma t}\left(\tfrac1\gamma{\rm D}_y+\tfrac12{\rm D}_x\right),\\[1ex]
\hat{\mathfrak P}^1:={\rm e}^{-\frac12\gamma t}\left(\tfrac1\gamma{\rm D}_y-\tfrac12{\rm D}_x-\tfrac12(x+\tfrac32\gamma y)\right),\\[1ex]
\hat{\mathfrak P}^0:={\rm e}^{-\frac32\gamma t}\left(\tfrac1\gamma{\rm D}_y-\tfrac32{\rm D}_x\right)
\end{gather*}
are Lie-symmetry operators of the equation~\eqref{eq:Kramers34}
and are associated with the Lie-symmetry vector fields
$-\hat{\mathcal P}^3$,
$-\hat{\mathcal P}^2$,
$-\hat{\mathcal P}^1$,
$-\hat{\mathcal P}^0$, respectively.

Analogously, the solutions of the equation~\eqref{eq:Kramers316}
that are invariant with respect to the generalized symmetries
$\big((\check{\mathfrak P}^2+\varepsilon\check{\mathfrak P}^0)^nu\big)\p_u$,
$\big((\check{\mathfrak P}^1)^nu\big)\p_u$ or
$\big((\check{\mathfrak P}^0)^nu\big)\p_u$
are of the form
\begin{gather*}
u=\sum_{k=0}^{n-1}(\check{\mathfrak P}^1)^k\big(
{\rm e}^{-\frac18(x+\frac34\gamma y)^2+\frac34\gamma t}\,\theta^k(z_1,z_2)\big)
\\
\qquad\mbox{with}\quad
z_1=\tfrac83{\rm e}^{\frac32\gamma t}+4\varepsilon{\rm e}^{\frac12\gamma t}-\tfrac12{\rm e}^{-\frac12\gamma t},\ \
z_2={\rm e}^{\frac34\gamma t}(\tfrac12\gamma y-2x)-\tfrac\varepsilon2{\rm e}^{-\frac14\gamma t}(x+\tfrac34\gamma y),
\\[1ex]
u=\sum_{k=0}^{n-1}(\check{\mathfrak P}^2)^k
\big({\rm e}^{\gamma t}\,\theta^k(z_1,z_2)\big)
\quad\mbox{with}\quad
z_1=\tfrac23{\rm e}^{\frac32\gamma t},\ \
z_2={\rm e}^{\frac34\gamma t}(x+\tfrac14\gamma y),
\\[1ex]
u=\sum_{k=0}^{n-1}(\check{\mathfrak P}^3)^k
\big(
{\rm e}^{\gamma t}\,\theta^k(z_1,z_2)
\big)
\quad\mbox{with}\quad
z_1=2{\rm e}^{\frac12\gamma t},\ \
z_2={\rm e}^{\frac14\gamma t}(x+\tfrac34\gamma y),
\end{gather*}
where the differential operators in total derivatives
\begin{gather*}
\check{\mathfrak P}^3:={\rm e}^{\frac34\gamma t}\left(\tfrac8\gamma{\rm D}_y+6{\rm D}_x+6(x+\tfrac14\gamma y)\right),\quad
\check{\mathfrak P}^2:={\rm e}^{\frac14\gamma t}\left(\tfrac4\gamma{\rm D}_y+{\rm D}_x+x+\tfrac34\gamma y\right),\\[1ex]
\check{\mathfrak P}^1:={\rm e}^{-\frac14\gamma t}\left(\tfrac2\gamma{\rm D}_y-\tfrac12{\rm D}_x\right),\quad
\check{\mathfrak P}^0:={\rm e}^{-\frac34\gamma t}\left(\tfrac1\gamma{\rm D}_y-\tfrac34{\rm D}_x\right)
\end{gather*}
are Lie-symmetry operators of the equation~\eqref{eq:Kramers316} and are associated with the Lie symmetry vector fields
$-\check{\mathcal P}^3$,
$-\check{\mathcal P}^2$,
$-\check{\mathcal P}^1$,
$-\check{\mathcal P}^0$.

Note that
$\Phi_{\eqref{eq:Kramers34}*}\hat{\mathfrak Q}=\mathfrak Q$ and $\Phi_{\eqref{eq:Kramers316}*}\check{\mathfrak Q}=\mathfrak Q$
for $\mathfrak Q\in\{\mathfrak P^3, \mathfrak P^2, \mathfrak P^1,\mathfrak P^0,1\}$.

\noprint{
\todo\todo

The fundamental solution $F$ of the equation~\eqref{eq:RemarkableFP} can be also pulled back
to the fundamental solutions~$\Phi_{\eqref{eq:Kramers34}}^*F$ and~$\Phi_{\eqref{eq:Kramers316}}^*F$ of the equations~\eqref{eq:Kramers34} and~\eqref{eq:Kramers316}, respectively,

We fix a point $(t',x',y')$
\begin{gather*}
\Phi_{\eqref{eq:Kramers34}}^*F(t,x,y,t',x',y')=
{\rm e}^{\frac14(x+\frac32\gamma y)^2-\frac1{16}(3\gamma y'+2x')^2+\frac32\gamma(t-t')}
\end{gather*}
\begin{gather*}
\Phi_{\eqref{eq:Kramers34}}^*F(t,x,y,t',x',y')=
{\rm e}^{\frac1{16}(3\gamma(y-y')+2(x-x'))(3\gamma(y+y')+2(x+x'))+\frac32\gamma(t-t')}
\end{gather*}

\begin{equation*}
F(t,y,x,t',y',x')=\frac{\sqrt3H(t-t')}{2\pi(t-t')^2}
\exp\left(-\frac{(x-x')^2}{4(t-t')}-\frac{3\big(y-y'-\frac12(x+x')(t-t')\big)^2}{(t-t')^3}\right),
\end{equation*}
\begin{gather*}
u=\Phi_{\eqref{eq:Kramers34}}^*f
={\rm e}^{\frac14(x+\frac32\gamma y)^2+\frac32\gamma t}f
\Big(
{\rm e}^{\gamma t},
{\rm e}^{\frac12\gamma t}\big(\tfrac32\gamma y+x\big),
\gamma{\rm e}^{\frac32\gamma t}y
\Big),
\\
u=\Phi_{\eqref{eq:Kramers316}}^*f
={\rm e}^{\gamma t}f
\Big(
{\rm e}^{\frac12\gamma t},
\tfrac1{2\sqrt2}{\rm e}^{\frac14\gamma t}\big(x+\tfrac34\gamma y\big),
\tfrac1{2\sqrt2}\gamma{\rm e}^{\frac34\gamma t}y
\Big).
\end{gather*}

\todo\todo
}


\section{Conclusion}\label{sec:Conclusion}

The remarkable ultraparabolic Fokker--Planck equation~\eqref{eq:RemarkableFP} had been intensively studied in the mathematical literature
from various points of view, including the classical group analysis of differential equations.
Nevertheless, there were no exhaustive and accurate results even on its complete point symmetry pseudogroup
and on classification of its Lie reductions, and the present paper has filled up this gap.
Moreover, some of the obtained results were unexpected and may affect the entire field of classical group analysis of differential equations.

The complete point symmetry pseudogroup~$G$ of the equation~\eqref{eq:RemarkableFP}
is given in Theorem~\ref{thm:RemarkableFPSymGroup}.
To simplify the computation of this pseudogroup, we have applied the two-step version of the direct method.
In the first step, we have considered the class~$\bar{\mathcal F}$
of (1+2)-dimensional ultraparabolic Fokker--Planck equations of the general form~\eqref{eq:Fokker_Planck_superclass},
which contains the equation~\eqref{eq:RemarkableFP},
have proven its normalization in the usual sense and have found its equivalence pseudogroup~$G^\sim_{\bar{\mathcal F}}$,
see Theorem~\ref{thm:EquivalenceGroupFPsuperClass}.
Exhaustively describing the equivalence groupoid of the class~$\bar{\mathcal F}$ in this way,
we have derived the principal constraints for point symmetries of the equation~\eqref{eq:RemarkableFP}.
In the second step, we have in fact looked for admissible transformations of the class~$\bar{\mathcal F}$
that preserve the equation~\eqref{eq:RemarkableFP}, i.e., that constitute its vertex group.
This has led to a highly coupled overdetermined system of nonlinear partial differential equations
for the transformation components.
We have successfully found its general solution
and constructed a nice representation~\eqref{eq:RemarkableFPSymGroup} for elements of~$G$.
A similar splitting in the course of computing point symmetries by the direct method was used earlier, e.g.,
in \cite{bihl2011c}, but there the equivalence groupoid of the corresponding class had been known.
In the present paper, we have first found the equivalence groupoid of a class of differential equations
in order to compute the point symmetry (pseudo)group of a single element of this class,
and this is indeed the optimal way of computing in spite of looking peculiar.

The description of~$G$ in Theorem~\ref{thm:RemarkableFPSymGroup} implies
that its elements with $f=0$ and with their natural domains
are not necessarily defined on the entire space~$\mathbb R^4_{t,x,y,u}$.
This requires considering a pseudogroup constituted by all possible restrictions of such transformations,
when the usual composition of point transformations is taken as the group operation in~$G$,
complicates the entire analysis of the structure of~$G$.
In Section~\ref{sec:RemarkableFPPointSymGroup}, we have avoided the indicated problem by defining
the multiplication of elements in~$G$ as their composition modified with the extension of its domain by continuity.
Under the suggested formalism,
the representation~\eqref{eq:RemarkableFPSymGroup} for the elements of~$G$
has allowed us to comprehensively analyze the structure of~$G$.
The pseudogroup~$G$ contains the abelian normal pseudosubgroup~$G^{\rm lin}$,
which is associated with the linear superposition of solutions.
Moreover, the pseudogroup~$G$ splits over~$G^{\rm lin}$, $G=G^{\rm ess}\ltimes G^{\rm lin}$.
Here $G^{\rm ess}$ is a subgroup of~$G$, which is a (finite-dimensional) Lie group
and admits the factorization $G^{\rm ess}=(F\ltimes R_{\rm c})\times R_{\rm d}$,
where $F\simeq{\rm SL}(2,\mathbb R)$, $R_{\rm c}\simeq{\rm H}(2,\mathbb R)$ and $R_{\rm d}\simeq\mathbb Z_2$.
Since ${\rm SL}(2,\mathbb R)$ and ${\rm H}(2,\mathbb R)$ are connected Lie groups
and $G^{\rm lin}$ is a connected Lie pseudogroup,
the above factorizations directly imply Corollary~\ref{cor:RemarkableFPDiscretePointSyms},
which states the surprising fact that the equation~\eqref{eq:RemarkableFP} admits
a single independent discrete point symmetry transformation.
The simplest choice for such a transformation is
the involution $\mathscr I'\colon(t,x,y,u)\mapsto(t,x,y,-u)$,
which generates the group~$R_{\rm d}$.

In addition, we have constructed the point transformation~\eqref{eq:MapFundSolutRemarkableFP},
which maps the function $u=1-H(t)$ to the fundamental solution of the equation~\eqref{eq:RemarkableFP}.
A similar construction arises in the Lie symmetry analysis of linear (1+1)-dimensional heat equation.

The accurate consideration of the representation of the subgroup~$F$ of~$G$
on the radical $\mathfrak r$ of the algebra $\mathfrak g^{\rm ess}$,
which coincides with the representation of ${\rm SL}(2,\mathbb R)$ on the space of real binary cubic forms,
has allowed us to successfully classify $G^{\rm ess}$-inequivalent one- and two-dimensional subalgebras of $\mathfrak g^{\rm ess}$,
which for a long time had been a stumbling block in the course of constructing inequivalent Lie invariant solutions
of the equation~\eqref{eq:RemarkableFP} before.

Overcoming this obstacle has allowed us to exhaustively classify Lie reductions of codimensions one and two of the equation~\eqref{eq:RemarkableFP} and its Lie invariant solutions.
In particular, it has been reduced to the (1+1)-dimensional linear heat equations
with the zero and an inverse quadratic potentials. 
The former equation had been comprehensively studied within the framework of Lie reduction.
All inequivalent Lie invariant solutions of linear (1+1)-dimensional heat equations with inverse square potentials 
over the real field have been presented in closed form in Appendix~\ref{sec:SymHeatEqSquarePot}.
It has also been shown that Lie reductions to ordinary differential and algebraic equations, 
which are of codimensions two and three, respectively,
give no new solutions in comparison with those constructed using Lie reductions of codimension one.
Nevertheless, for each of the reduced ordinary differential equations, we have found its general solution 
and thus presented the corresponding Lie invariant solutions in explicit form. 
Mapping solutions of reduced equations to those of the original equation~\eqref{eq:RemarkableFP}
has led to finding wide families of its explicit closed-form solutions.
Up to the $G^{\rm ess}$-equivalence, essential among these families are
three families parameterized by single arbitrary solutions of the classical (1+1)-dimensional linear heat equation and
one family parameterized by an arbitrary solution of the (1+1)-dimensional linear heat equation with a particular inverse square potential.
One can further extend the constructed families of solutions,
iteratively acting on them by Lie-symmetry operators.
In this way, we have obtained families of exact solutions of the equation~\eqref{eq:RemarkableFP}
that are nontrivially parameterized by an arbitrary finite number of arbitrary solutions of the (1+1)-dimensional linear heat equation.
Modulo the $G^{\rm ess}$-equivalence,
these families exhaust solutions of the equation~\eqref{eq:RemarkableFP}
that are invariant with respect to its generalized symmetries
associated with powers of operators from
$\langle\mathfrak P^0,\mathfrak P^1,\mathfrak P^2,\mathfrak P^3,1\rangle$.
We have showed that the equation~\eqref{eq:RemarkableFP} admits hidden symmetries
that are associated with reductions to the (1+1)-dimensional linear heat equations with the zero or inverse square potentials.
This is why, in Appendix~\ref{sec:HiddenSyms}, we have presented a general optimized algorithm for constructing hiddenly invariant solutions.

Studying Kramers equations, which constitute the subclass $\mathcal K$ of the class $\bar{\mathcal F}$,
we have found nontrivial point transformations that map all equations from the class $\mathcal K$
with eight-dimensional essential Lie invariance algebras,
i.e., the equations~\eqref{eq:Kramers34} and~\eqref{eq:Kramers316} up to shifts of~$y$,
to the equation~\eqref{eq:RemarkableFP}.
Moreover, the presented formulas for mapping solutions of the equation~\eqref{eq:RemarkableFP}
to solutions of the equations~\eqref{eq:Kramers34} and~\eqref{eq:Kramers316}
give the simplest way of obtaining explicit solutions of the latter equations.

The results obtained in the paper can be extended in many directions.
In particular, one can compute reduction modules~\cite{boyk2016a} of the equation~\eqref{eq:RemarkableFP}
or, more promisingly, extend the consideration from Section~\ref{sec:GenReductions}
on generalized symmetries of this equation
and on generating its new solutions by acting with Lie-symmetry differential operators.
Conservation laws and potential symmetries of the equation~\eqref{eq:RemarkableFP}
also require a comprehensive study.

Instead of considering various properties of the equation~\eqref{eq:RemarkableFP},
we can extend results of the present paper
by studying other linear (1+2)-dimensional ultraparabolic equations,
including the group classification of classes of such equations,
all of which are reduced by point transformations to the form~\eqref{eq:Fokker_Planck_superclass}.
We are working on completing the solution of the group classification problem for
the subclass of~$\bar{\mathcal F}$ that consists of
the Fokker--Planck equations of the form $u_t+xu_y=|x|^\beta u_{xx}$
parameterized by an arbitrary real constant~$\beta$.
This subclass contains the equation~\eqref{eq:RemarkableFP}
and is of interest from the point of view of applications and due to its mathematical properties.
In particular, it admits a non-obvious discrete point equivalence transformation
\[
\tilde t=y\sgn x,\quad
\tilde x=\frac1x,\quad
\tilde y=t\sgn x,\quad
\tilde u=\frac ux,\quad
\tilde\beta=5-\beta.
\]

Of course, the most prominent among the above group classification problems
is the group classification of the entire class of linear (1+2)-dimensional ultraparabolic equations,
which reduces to the group classification of the class~$\bar{\mathcal F}$
of the linear (1+2)-dimensional ultraparabolic Fokker--Planck equations,
which are of the form~\eqref{eq:Fokker_Planck_superclass}.
The first step of the latter classification is the description of the equivalence groupoid of the class~$\bar{\mathcal F}$
by proving its normalization in the usual sense and constructing its usual equivalence pseudogroup,
which is presented in Theorem~\ref{thm:EquivalenceGroupFPsuperClass}.
The proof of this theorem and the complete solution of the group classification problem
for the class~$\bar{\mathcal F}$ will be a subject of another paper.

\appendix

\section{Exact solutions of (1+1)-dimensional linear heat equations\\ with inverse square potential}\label{sec:SymHeatEqSquarePot}

Lie reduction of the equation~\eqref{eq:RemarkableFP} with respect to the subalgebra $\mathfrak s_{1.2}^0=\langle\mathcal P^t\rangle$
leads to reduced equation~1.2$^0$, which is the linear heat equation with potential $V=\mu z_2^{-2}$, where $\mu=\tfrac5{36}$.
The linear heat equations with general (nonzero) inverse square potentials $V=\mu z_2^{-2}$, where $\mu\ne0$,
constitute an important case of Lie-symmetry extension in the class of linear second-order partial differential equations
in two independent variables.
This result was obtained by Sophus Lie himself~\cite{lie1881a}.
Lie invariant solutions of such heat equations over the complex field were considered in~\cite{gung2018b}.
The essential point symmetry group of these equations
was constructed as a by-product in the course of proving Theorems~18 in~\cite{opan2022a},
and a background that can be used for exhaustively classifying Lie invariant solutions of these equations over the real field
was developed in the proof of Theorems~51 therein.
We revisit the above results, present them in an enhanced and closed form
and complete the study of Lie invariant solutions of linear heat equations with (nonzero) inverse square potentials.
Recall that Lie reductions of the (1+1)-dimensional linear heat equation, which corresponds to the value $\mu=0$,
were comprehensively studied in~\cite[Section A]{vane2021a}, following Examples 3.3 and 3.17 in~\cite{olve1993A}.
Hereafter, we use notations that differ from those in the other sections of this paper.

Consider a linear heat equation with inverse square potential,
\begin{gather}\label{eq:HeatEqSquarePot}
u_t=u_{xx}+\frac\mu{x^2}u,
\end{gather}
where $\mu\ne0$.
It belongs to the class $\mathcal E$ of linear (1+1)-dimensional second-order evolution equations of the general form
\begin{gather}
u_t=A(t,x)u_{xx}+B(t,x)u_x+C(t,x)u+D(t,x)\quad \mbox{with}\quad A\ne0.
\end{gather}
Here the tuple of arbitrary elements of $\mathcal E$ is $\theta:=(A,B,C,D)\in\mathcal S_{\mathcal E}$,
where $S_{\mathcal E}$ is the solution set of the auxiliary system
consisting of the single inequality $A\ne0$ with no constraints on $B$, $C$ and $D$.

Since the equation~\eqref{eq:HeatEqSquarePot} is a linear homogeneous equation,
its maximal Lie invariance algebra~$\mathfrak g$ contains the infinite-dimensional abelian ideal $\mathfrak g^{\rm lin}:=\{h(t,x)\p_u\}$,
where the parameter function~$h$ runs through its solution set, cf.\ Section~\ref{sec:RemarkableFPMIA}.
This ideal is associated with the linear superposition of solutions of~\eqref{eq:HeatEqSquarePot}.
The algebra~$\mathfrak g$ splits over the ideal~$\mathfrak g^{\rm lin}$,
$\mathfrak g=\mathfrak g^{\rm ess}\lsemioplus\mathfrak g^{\rm lin}$,
where the complement subalgebra $\mathfrak g^{\rm ess}$ is four-dimensional,
\begin{gather*}
\mathfrak g^{\rm ess}=\big\langle
\mathcal P^t=\p_t,\
\mathcal D=t\p_t+\tfrac12x\p_x-\tfrac14u\p_u,\
\mathcal K=t^2\p_t+tx\p_x-\tfrac14(x^2+2t)u\p_u,\
\mathcal I=u\p_u
\big\rangle.
\end{gather*}
The subalgebra $\mathfrak g^{\rm ess}$ is called the essential Lie invariance algebra of~\eqref{eq:HeatEqSquarePot}.
It is the algebra $\mathfrak a_2^{\delta=0}$ in the notation of Section~\ref{sec:RemarkableFPLieRedCoD1}.
Up to the skew-symmetry of the Lie bracket, the nonzero commutation relations of this algebra are exhausted by
$[\mathcal P^t,\mathcal D]=\mathcal P^t$,
$[\mathcal D,\mathcal K]=\mathcal K$,
$[\mathcal P^t,\mathcal K]=2\mathcal D$.
Therefore, the algebra~$\mathfrak g^{\rm ess}$ is isomorphic to ${\rm sl}(2,\mathbb R)\oplus A_1$.

To find the complete point symmetry pseudogroup~$G$ of the equation~\eqref{eq:HeatEqSquarePot},
we start with considering the equivalence groupoid of the class~$\mathcal E$,
which in its turn is the natural choice for a (normalized) superclass for the equation~\eqref{eq:HeatEqSquarePot}.
We use the papers~\cite{opan2022a,popo2008a} as reference points for known results on admissible transformations of the class~$\mathcal E$.

\begin{proposition}[\cite{popo2008a}]\label{prop:EquivalenceGroupE}
The class~$\mathcal E$ is normalized in the usual sense.
Its usual equivalence pseudogroup~$G^\sim_{\mathcal E}$ consists of the transformations of the~form%
\begin{subequations}\label{eq:PointTransInEA}
\begin{gather}\label{eq:GenFormOfPointTransInEA}
\tilde t=T(t),\quad
\tilde x=X(t,x),\quad
\tilde u=U^1(t,x)u+U^0(t,x),\\ \label{eq:GenFormOfPointTransInEB}
\tilde A=\frac{X_x^2}{T_t}A,\quad
\tilde B=\frac{X_x}{T_t}\left(B-2\frac{U^1_x}{U^1}A\right)-\frac{X_t-X_{xx}A}{T_t},\quad
\tilde C=-\frac{U^1}{T_t}\mathrm E\frac1{U^1},\\
\label{eq:GenFormOfPointTransInEC}
\tilde D=\frac{U^1}{T_t}\left(D+\mathrm E\frac{U^0}{U^1}\right),
\end{gather}
\end{subequations}
where~$T$, $X$, $U^0$ and~$U^1$ are arbitrary smooth functions of their arguments with~$T_tX_xU^1\ne0$, and
$\mathrm E:=\p_t-A\p_{xx}-B\p_x-C$.
\end{proposition}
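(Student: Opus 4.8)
The plan is to establish both assertions simultaneously by the direct method, exactly as one proves the analogous Theorem~\ref{thm:EquivalenceGroupFPsuperClass} for the class~$\bar{\mathcal F}$. I would start from a completely general point transformation $\tilde t=T(t,x,u)$, $\tilde x=X(t,x,u)$, $\tilde u=U(t,x,u)$ with nonvanishing Jacobian, and demand that it map an arbitrary equation of~$\mathcal E$, with arbitrary-element tuple $\theta=(A,B,C,D)$, $A\neq0$, to \emph{some} equation of~$\mathcal E$ with a tuple $\tilde\theta=(\tilde A,\tilde B,\tilde C,\tilde D)$, $\tilde A\neq0$. Expressing the transformed derivatives $\tilde u_{\tilde t}$, $\tilde u_{\tilde x}$, $\tilde u_{\tilde x\tilde x}$ through the total-derivative operators $\mathrm D_t$, $\mathrm D_x$ and the (on solutions invertible) matrix of total derivatives of $(\tilde t,\tilde x)$ with respect to $(t,x)$, substituting into the target equation, and eliminating every $t$-derivative of~$u$ by means of the source equation and its differential consequences, turns the invariance condition into a single identity in the jet variables $u,u_x,u_{xx},u_{xxx},\dots$, whose coefficients depend on $(t,x,u)$ only.

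The first, and conceptually the hardest, step is then the splitting of this identity with respect to the highest-order jet variables in order to recover the fibered, affine structure of~\eqref{eq:GenFormOfPointTransInEA}. Collecting the coefficients of the highest-order jet variables (such as $u_{xxx}$ and the top powers of $u_x$ produced by the general chain rule) forces $T_x=T_u=0$ and $X_u=0$, so that $\tilde t=T(t)$ with $T_t\neq0$ and $\tilde x=X(t,x)$ with $X_x\neq0$; the vanishing of the $u_x^2$-coefficient in the transformed $\tilde u_{\tilde x\tilde x}$ then yields $U_{uu}=0$, i.e.\ $\tilde u=U^1(t,x)u+U^0(t,x)$ with $U^1\neq0$. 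Alternatively, one may short-circuit this step by invoking the classical description of point transformations between second-order $(1+1)$-dimensional evolution equations, which already guarantees $\tilde t=T(t)$, $\tilde x=X(t,x,u)$, $\tilde u=U(t,x,u)$; linearity of the class then removes the remaining $u$-dependence of~$X$ and the nonlinear $u$-dependence of~$U$. I expect this structural reduction to be the main obstacle, because it requires careful handling of the full, non-fibered chain rule and of the differential consequences used to eliminate the $t$-derivatives.

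Once the fibered, affine form is in hand, the remaining work is routine coefficient matching. Substituting $\tilde u=U^1u+U^0$ into $\tilde u_{\tilde t}=\tilde A\tilde u_{\tilde x\tilde x}+\tilde B\tilde u_{\tilde x}+\tilde C\tilde u+\tilde D$, again replacing $u_t$ by $Au_{xx}+Bu_x+Cu+D$, and splitting the resulting identity with respect to $u_{xx}$, $u_x$, $u$ and the free term produces precisely the four relations~\eqref{eq:GenFormOfPointTransInEB}--\eqref{eq:GenFormOfPointTransInEC} for $\tilde A$, $\tilde B$, $\tilde C$ and $\tilde D$; here the operator $\mathrm E=\p_t-A\p_{xx}-B\p_x-C$ is exactly what organizes the $\tilde C$- and $\tilde D$-relations. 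A key point to verify at this stage is that no further constraints are imposed: the functions $T$, $X$, $U^1$, $U^0$ remain free subject only to smoothness and $T_tX_xU^1\neq0$, so that they run over the whole parameter set claimed in the statement.

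Finally, normalization is read off from the outcome of the direct method. Since the admissible point transformation $\Phi$ with components $(T,X,U^1,U^0)$ found above is \emph{independent of the source tuple}~$\theta$ and sends~$\theta$ to the target tuple~$\tilde\theta$ determined \emph{universally} by~\eqref{eq:GenFormOfPointTransInEB}--\eqref{eq:GenFormOfPointTransInEC}, every admissible transformation of~$\mathcal E$ is the restriction of an element of~$G^\sim_{\mathcal E}$ acting on arbitrary elements. Hence the equivalence groupoid of~$\mathcal E$ coincides with the action groupoid of~$G^\sim_{\mathcal E}$, which is exactly the assertion that~$\mathcal E$ is normalized in the usual sense; the explicit form~\eqref{eq:PointTransInEA} of~$G^\sim_{\mathcal E}$ is the one obtained in the matching step.
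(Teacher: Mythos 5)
The paper does not prove this proposition at all: it is imported verbatim from the cited reference \cite{popo2008a}, and your direct-method computation (general point transformation, splitting with respect to the highest-order jet variables to force $\tilde t=T(t)$, $X_u=0$, $U_{uu}=0$, then coefficient matching in $u_{xx}$, $u_x$, $u$ and the free term, with normalization read off from the fact that the resulting components are independent of the source tuple~$\theta$) is exactly the argument of that source and the same scheme the paper itself applies in proving Theorems~\ref{thm:RemarkableFPSymGroup} and~\ref{thm:HeatEqSquarePotSymGroup}. Your sketch is correct in structure and identifies the genuinely delicate step (the fibered, affine reduction of the transformation) accurately, so it matches the standard proof essentially step for step.
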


The normalization of the class~$\mathcal E$ means that its equivalence groupoid coincides
with the action groupoid of the pseudogroup~$G^\sim_{\mathcal E}$.

\begin{theorem}\label{thm:HeatEqSquarePotSymGroup}
The complete point symmetry pseudogroup~$G$
of the (1+1)-dimensional linear heat equation with inverse square potential~\eqref{eq:HeatEqSquarePot}
consists of the point transformations of the form
\[
\tilde t=\frac{\alpha t+\beta}{\gamma t+\delta},\quad
\tilde x=\frac x{\gamma t+\delta},\quad
\tilde u=\sigma\sqrt{|\gamma t+\delta|}(u+h(t,x))\exp{\frac{\gamma x^2}{4(\gamma t+\delta)}},
\]
where $\alpha$, $\beta$, $\gamma$, $\delta$ and $\sigma$ are arbitrary constants with $\alpha\delta-\beta\gamma=1$ and $\sigma\ne0$,
and $h$ is an arbitrary solution of~\eqref{eq:HeatEqSquarePot}.
\end{theorem}

\begin{proof}
The linear heat equation with inverse square potential~\eqref{eq:HeatEqSquarePot}
corresponds to the value $(1,0,\mu x^{-2},0)=:\theta^\mu$
of the arbitrary element tuple $\theta=(A,B,C,D)$ of class $\mathcal E$.
Its vertex group $\mathcal G_{\theta^\mu}:=\mathcal G^\sim_{\mathcal E}(\theta^\mu,\theta^\mu)$
is the set of admissible transformations of the class~$\mathcal E$ with~$\theta^\mu$ as both their source and target,
$\mathcal G_{\theta^\mu}=\{(\theta^\mu,\Phi,\theta^\mu)\mid\Phi\in G\}$.
This argument allows us to use Proposition~\ref{prop:EquivalenceGroupE} in the course of computing the pseudogroup~$G$.
	
We should integrate the equations~\eqref{eq:PointTransInEA},
where both the source value~$\theta$ of the arbitrary-element tuple and its target value~$\tilde\theta$
coincide with $\theta^\mu$,
with respect to the parameter functions $T$, $X$, $U^1$ and $U^0$.
After a simplification, the equations~\eqref{eq:GenFormOfPointTransInEB} take the form
\begin{gather}\label{eq:PointTransInETransPart}
X_x^2=T_t,\quad
\frac{U^1_x}{U^1}=-\frac{X_t}{2X_x},\quad
\frac\mu{X^2}=-\frac{U^1}{T_t}\mathrm E\frac1{U^1},
\end{gather}
where ${\rm E}:=\p_t-\p_{xx}-\mu x^{-2}$.
The first equation in~\eqref{eq:PointTransInETransPart} implies that $T_t>0$,
and the first two equations in~\eqref{eq:PointTransInETransPart} can be easily integrated to
\begin{gather*}
X=\varepsilon\sqrt{T_t}x+X^0(t),\quad
U^1=\phi(t)\exp\left(-\frac{T_{tt}}{8T_t}x^2-\frac\varepsilon2\frac{X^0_t}{\sqrt{T_t}}x\right),
\end{gather*}
where $\varepsilon:=\pm1$ and $\phi$ is a nonvanishing smooth function of $t$.
By substituting these expressions for~$X$ and~$U^1$ into the third equation from~\eqref{eq:PointTransInETransPart}
and splitting the obtained equation with respect to powers of $x$,
we derive that $X^0=0$, $4T_t\phi_t+T_{tt}\phi=0$ and $T_{ttt}/T_t-\frac32(T_{tt}/T_t)^2=0$.
The last equation means that the Schwarzian derivative of~$T$ is zero.
Therefore, $T$ is a linear fractional function of~$t$, $T=(\alpha t+\beta)/(\gamma t+\delta)$.
Since the constant parameters~$\alpha$, $\beta$, $\gamma$ and~$\delta$ are defined up to a constant nonzero multiplier
and $T_t>0$, we can assume that $\alpha\delta-\beta\gamma=1$.
Then these parameters are still defined up to a multiplier in $\{-1,1\}$,
and hence we can choose them in such a way that $\varepsilon|\gamma t+\delta|=\gamma t+\delta$,
thus neglecting the parameter~$\varepsilon$.
The equation $4T_t\phi_t+T_{tt}\phi=0$ takes the form $2(\gamma t+\delta)\phi_t-\gamma\phi=0$
and integrates, in view of~$\phi\ne0$, to $\phi=\sigma\sqrt{|\gamma t+\delta|}$ with $\sigma\in\mathbb R\setminus\{0\}$.

Finally, the equation~\eqref{eq:GenFormOfPointTransInEC} takes the form
\[
\left(\dfrac{U^0}{U^1}\right)_t=\left(\dfrac{U^0}{U^1}\right)_{xx}+\frac\mu{x^2}\left(\dfrac{U^0}{U^1}\right).
\]
Therefore, $U^0=U^1h$, where $h=h(t,x)$ is an arbitrary solution of~\eqref{eq:HeatEqSquarePot}.
\end{proof}

Accurate analysis of the structure of the pseudogroup~$G$ needs a proper interpretation of the corresponding group operation,
which we assume to be done in the same way as that in Section~\ref{sec:RemarkableFPPointSymGroup}.
The point transformations of the form $\tilde t=t$, $\tilde x=x$, $\tilde u=u+h(t,x)$,
where the parameter function $h=h(t,x)$ is an arbitrary solution of the equation~\eqref{eq:HeatEqSquarePot},
constitute the normal pseudosubgroup~$G^{\rm lin}$ of~$G$,
which is associated with the linear superposition of solutions of~\eqref{eq:HeatEqSquarePot},
cf.\ Section~\ref{sec:RemarkableFPPointSymGroup}.
Moreover, the pseudogroup~$G$ splits over $G^{\rm lin}$, $G=G^{\rm ess}\ltimes G^{\rm lin}$,
where the subgroup~$G^{\rm ess}$ consists of the elements of~$G$ with $h=0$
and is a four-dimensional Lie group within the framework of the above interpretation.
We call this subgroup the \emph{essential point symmetry group} of the equation~\eqref{eq:HeatEqSquarePot}.
In turn, the group~$G^{\rm ess}$ contains the normal subgroup $R$ that is constituted by
the point transformations of the form $\tilde t=t$, $\tilde x=x$, $\tilde u=\sigma u$ with $\sigma\in\mathbb R\setminus\{0\}$
and is isomorphic to the multiplicative group~$\mathbb R^\times$ of the real field.
The group~$G^{\rm ess}$ splits over~$R$, $G^{\rm ess}=F\ltimes R$,
where the subgroup~$F$ is singled out from~$G^{\rm ess}$ by the constraint~$\sigma=1$
and is isomorphic to the group ${\rm SL}(2,\mathbb R)$.

\begin{corollary}
A complete list of discrete point symmetry transformations of the equation~\eqref{eq:HeatEqSquarePot}
that are independent up to combining with each other and with continuous point symmetry transformations of this equation
is exhausted by the single involution~$\mathscr I'$ alternating the sign of~$u$,
$\mathscr I'\colon(t,x,u)\mapsto(t,x,-u).$
Thus, the quotient group of the complete point symmetry pseudogroup~$G$ of~\eqref{eq:HeatEqSquarePot}
with respect to its identity component is isomorphic to $\mathbb Z_2$.
\end{corollary}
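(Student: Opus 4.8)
The plan is to mirror the proof of Corollary~\ref{cor:RemarkableFPDiscretePointSyms} and to exploit the decomposition $G=(F\ltimes R)\ltimes G^{\rm lin}$ established above, in which $F\cong{\rm SL}(2,\mathbb R)$, $R\cong\mathbb R^\times$, and $G^{\rm lin}$ is the superposition (pseudo)subgroup. Writing $G^\circ$ for the identity component of~$G$, I would use that the underlying space of a semidirect product is the product of the underlying spaces of its factors, so that the component group of~$G$ is the product of the component groups of~$F$, $R$ and~$G^{\rm lin}$. It therefore suffices to determine the identity component inside each factor and to check that the only residual discrete element is the involution~$\mathscr I'$.

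First I would show that $G^{\rm lin}$ lies entirely in~$G^\circ$: its parameter function~$h$ ranges over the real vector space of solutions of~\eqref{eq:HeatEqSquarePot}, and the homotopy $h\mapsto sh$, $s\in[0,1]$, continuously joins any element of~$G^{\rm lin}$ to the identity transformation. Likewise, the factor~$F$ is the image of the connected group~${\rm SL}(2,\mathbb R)$ under the parametrization~\eqref{eq:RadialHeatPointSymGroup} with $\sigma=1$, hence is connected and contained in~$G^\circ$.

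It then remains to analyze $R\cong\mathbb R^\times$, whose elements are the scalings $\tilde u=\sigma u$ with $\sigma\ne0$. Since $\sgn\sigma$ is a locally constant function on~$R$, the group~$R$ has exactly two connected components, namely the identity component $\{\sigma>0\}\cong\mathbb R^{>0}$ and the coset $\{\sigma<0\}$; the latter is obtained from the former by composition with~$\mathscr I'$, which corresponds to $\sigma=-1$. Combining these observations, every element of~$G$ coincides, up to composition with an element of~$G^\circ$, with either the identity or~$\mathscr I'$. This gives the asserted complete list of independent discrete symmetries and the isomorphism $G/G^\circ\cong\mathbb Z_2$.

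The step I expect to require the most care is the justification that $G^\circ$ is precisely the product of the identity components of the three factors, i.e.\ that $\sgn\sigma$ is genuinely the only obstruction to connecting a transformation to the identity. This is where one must invoke that the semidirect-product structure makes the counting of components multiplicative together with $\pi_0(F)=\pi_0(G^{\rm lin})=\{1\}$; since $G$ is a pseudogroup rather than a Lie group, some additional care is needed in handling the domains of its elements, but this does not affect the homotopy arguments above.
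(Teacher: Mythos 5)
Your argument coincides with the paper's own proof: both use the splitting $G=(F\ltimes R)\ltimes G^{\rm lin}$, observe that $F$ and $G^{\rm lin}$ are connected, and reduce the question to the two components of $R\cong\mathbb R^\times$, whose nonidentity component is represented by $\mathscr I'$. Your version merely spells out a few details (the linear homotopy in $G^{\rm lin}$, the local constancy of $\sgn\sigma$) that the paper leaves implicit.
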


\begin{proof}
It is obvious that $G^{\rm lin}$ and~$F$ are connected pseudosubgroup and subgroup of~$G$, respectively.
Jointly with the splitting of~$G$ over~$R$, this implies that elements of the required list
can be selected from the subgroup~$R$.
Factoring out the elements of the identity component of~$R$,
which is isomorphic to~$\mathbb R^\times_{>0}$,
we obtain either the identity transformation or~$\mathscr I'$.
\end{proof}

\begin{lemma}(\cite{pate1977a, popo2003a})
A complete list of $G^{\rm ess}$-inequivalent  subalgebras of $\mathfrak g^{\rm ess}$ is exhausted by the subalgebras
\begin{gather*}
\mathfrak s_{1.1}^\delta=\langle\mathcal P^t+\delta\mathcal I\rangle,\ \
\mathfrak s_{1.2}^\nu=\langle\mathcal D+\nu\mathcal I\rangle,_{\ \nu\geqslant0},\ \
\mathfrak s_{1.3}^\nu=\langle\mathcal P^t+\mathcal K+2\nu\mathcal I\rangle,\ \
\mathfrak s_{1.4}=\langle\mathcal I\rangle,
\\
\mathfrak s_{2.1}^\nu=\langle\mathcal P^t,\mathcal D+\nu\mathcal I\rangle,\ \
\mathfrak s_{2.2}=\langle\mathcal P^t,\mathcal I\rangle,\ \
\mathfrak s_{2.3}=\langle\mathcal D,\mathcal I\rangle,\ \
\mathfrak s_{2.4}=\langle\mathcal P^t+\mathcal K,\mathcal I\rangle,
\\
\mathfrak s_{3.1}=\langle\mathcal P^t,\mathcal D,\mathcal K\rangle,\ \
\mathfrak s_{3.1}=\langle\mathcal P^t,\mathcal D,\mathcal I\rangle,
\ \
\mathfrak s_{4.1}=\mathfrak g^{\rm ess},
\end{gather*}
where $\delta\in\{-1,0,1\}$, and~$\nu$ is an arbitrary real constant satisfying indicated constraints.
The first number in the subscript of~$\mathfrak s$ denotes the dimension of the corresponding subalgebra.
\end{lemma}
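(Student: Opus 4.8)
The plan is to exploit the direct-sum structure $\mathfrak g^{\rm ess}=\mathfrak f\oplus\langle\mathcal I\rangle$, where $\mathfrak f=\langle\mathcal P^t,\mathcal D,\mathcal K\rangle\cong\mathrm{sl}(2,\mathbb R)$ is simple and $\mathcal I$ spans the center, and to reduce the $G^{\rm ess}$-classification to the known classification of subalgebras of $\mathrm{sl}(2,\mathbb R)$. First I would observe that the normal subgroup $R$ acts trivially on $\mathfrak g^{\rm ess}$ by pushforward: the scaling $u\mapsto\sigma u$ fixes each of $\mathcal P^t$, $\mathcal D$, $\mathcal K$ and $\mathcal I$. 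Consequently the $G^{\rm ess}$-action on $\mathfrak g^{\rm ess}$ coincides with that of $F\cong\mathrm{SL}(2,\mathbb R)$, namely the adjoint action on $\mathfrak f$ together with the identity on $\mathcal I$; thus two subalgebras are $G^{\rm ess}$-equivalent if and only if they are related by an inner automorphism of $\mathfrak f$ fixing $\mathcal I$. In particular no rescaling of the $\mathcal I$-coefficient is available, a point that governs the parameter ranges below.

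Next I would recall the standard list of subalgebras of $\mathfrak f\cong\mathrm{sl}(2,\mathbb R)$ up to inner automorphisms (see \cite{pate1977a,popo2003a}): the zero subalgebra; the three one-dimensional representatives $\langle\mathcal P^t\rangle$ (nilpotent), $\langle\mathcal D\rangle$ (hyperbolic) and $\langle\mathcal P^t+\mathcal K\rangle$ (elliptic); the Borel subalgebra $\langle\mathcal P^t,\mathcal D\rangle$; and $\mathfrak f$ itself. For a subalgebra $\mathfrak s$ let $p$ denote its image under the projection $\mathfrak g^{\rm ess}\to\mathfrak f$; I would then apply a Goursat-type dichotomy. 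Either $\mathcal I\in\mathfrak s$, in which case $\mathfrak s=p\oplus\langle\mathcal I\rangle$, or $\mathcal I\notin\mathfrak s$, in which case the projection is injective on $\mathfrak s$ and $\mathfrak s$ is the graph $\{X+\lambda(X)\mathcal I : X\in p\}$ of a linear map $\lambda\colon p\to\mathbb R$. Closure under the Lie bracket forces $\lambda([p,p])=0$ because $\mathcal I$ is central, so $\lambda$ factors through $p/[p,p]$; this immediately kills all graph contributions over the perfect algebra $\mathfrak f$ and over the derived part $\langle\mathcal P^t\rangle$ of the Borel, leaving at most a one-parameter graph over each one-dimensional $p$ and over the Borel. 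Matching both branches against the target list produces $\mathfrak s_{1.1}^\delta,\dots,\mathfrak s_{1.4}$ in dimension one, $\mathfrak s_{2.1}^\nu,\dots,\mathfrak s_{2.4}$ in dimension two, the two three-dimensional subalgebras, and $\mathfrak g^{\rm ess}$ itself.

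The remaining and most delicate step is normalizing the continuous graph parameters and pinning down their exact ranges, for which I would use the residual inner automorphisms stabilizing each projection $p$. For $p=\langle\mathcal P^t\rangle$ the flow of $\mathcal D$ rescales $\mathcal P^t$ by a positive factor while fixing $\mathcal I$, so after rescaling the generator the coefficient is reduced to $\delta\in\{-1,0,1\}$. For $p=\langle\mathcal D\rangle$ only the normalizer of the split Cartan subalgebra is available; its Weyl element sends $\mathcal D\mapsto-\mathcal D$ and fixes $\mathcal I$, which flips the sign of $\nu$ and yields the constraint $\nu\geqslant0$. By contrast, for the elliptic $p=\langle\mathcal P^t+\mathcal K\rangle$ the crucial fact is that no element of $\mathrm{SL}(2,\mathbb R)$ reverses the compact Cartan generator (the two orientations lie in genuinely distinct conjugacy classes), so no sign reduction is possible and $\nu$ ranges over all of $\mathbb R$; the same occurs for the Borel graph $\mathfrak s_{2.1}^\nu$, whose self-normalizing stabilizer cannot alter the $\mathcal I$-coefficient. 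I expect this hyperbolic-versus-elliptic normalizer analysis, combined with the verification that $R$ contributes no scaling of $\mathcal I$, to be the main technical obstacle; everything else is bookkeeping against the known $\mathrm{sl}(2,\mathbb R)$ list.
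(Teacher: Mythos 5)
Your argument is correct, and it is genuinely a proof, whereas the paper supplies none: the lemma is stated with a citation to \cite{pate1977a,popo2003a}, i.e., it simply imports the known classification of subalgebras of the four-dimensional algebra ${\rm sl}(2,\mathbb R)\oplus A_1$. Your route is the natural self-contained derivation: since $\mathcal I$ spans the center and $\mathfrak f=[\mathfrak g^{\rm ess},\mathfrak g^{\rm ess}]$, every pushforward preserves both summands, $R$ acts trivially, and the $G^{\rm ess}$-action reduces to the adjoint action of $F\cong{\rm SL}(2,\mathbb R)$ on $\mathfrak f$ extended by the identity on $\langle\mathcal I\rangle$; the Goursat dichotomy (either $\mathcal I\in\mathfrak s$, giving $p\oplus\langle\mathcal I\rangle$, or $\mathfrak s$ is the graph of a functional on $p$ vanishing on $[p,p]$) then exhausts the list over the standard subalgebra representatives of ${\rm sl}(2,\mathbb R)$. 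This is in effect the same Patera--Winternitz--Zassenhaus projection technique the authors do spell out for the eight-dimensional algebra in Lemmas~\ref{lem:RemarkableFP1DSubalgs} and~\ref{lem:RemarkableFP2DSubalgs}, specialized to the much easier case of a central one-dimensional complement. Your normalization of the parameters is also right and is the only place where care is needed: the coefficient over the nilpotent $\langle\mathcal P^t\rangle$ rescales only by positive factors (the two nonzero nilpotent ${\rm SL}(2,\mathbb R)$-orbits are the two sheets of the null cone), giving $\delta\in\{-1,0,1\}$ with $\pm1$ inequivalent; the Weyl element reverses $\mathcal D$ but no element of ${\rm SL}(2,\mathbb R)$ reverses the elliptic generator $\mathcal P^t+\mathcal K$ (the two sheets of the corresponding hyperboloid are distinct orbits), so $\nu\geqslant0$ in $\mathfrak s_{1.2}^\nu$ but $\nu\in\mathbb R$ in $\mathfrak s_{1.3}^\nu$; and the Borel is self-normalizing with trivial induced action on $\langle\mathcal P^t,\mathcal D\rangle/\langle\mathcal P^t\rangle$, so $\nu$ is a genuine invariant of $\mathfrak s_{2.1}^\nu$. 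What your approach buys is independence from the cited tables and an argument that visibly matches the method used elsewhere in the paper; what the citation buys is brevity. The only point worth making fully explicit is the first step, that pushforward by any element of $G^{\rm ess}$ is an automorphism of $\mathfrak g^{\rm ess}$ (not merely of $\mathfrak g$ modulo $\mathfrak g^{\rm lin}$) preserving the derived subalgebra and the center, which is immediate from the explicit form~\eqref{eq:RadialHeatPointSymGroup}.
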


We present the exhaustive classification of Lie invariant solutions of the equation~\eqref{eq:HeatEqSquarePot} over the field of real numbers
using results from the proof of Theorem 51 in \cite{opan2022a} for integrating the obtained reduced equations.
Hereafter, $C_1$ and $C_2$ are arbitrary real constants, $\kappa:=\tfrac12\sqrt{1-4\mu}$, $\kappa':=\frac12\kappa=\tfrac14\sqrt{1-4\mu}$,
and $\nu$ is a constant parameter.

The reduction with respect to the subalgebra~$\mathfrak s_{1.1}^\varepsilon$ with $\varepsilon\in\{-1,1\}$ results in the solution
\begin{gather*}
u={\rm e}^{\varepsilon t}\sqrt{|x|}\mathsf{Z}_{|\kappa|}(x),
\end{gather*}
where the cylinder function $\mathsf Z_{|\kappa|}(x)$ is
$\mathcal Z_\kappa$, $\mathcal C_\kappa$, $\tilde{\mathcal Z}_{|\kappa|}$ or $\tilde{\mathcal C}_{|\kappa|}$
if $4\mu\leqslant1$ and $\varepsilon=-1$,
$4\mu\leqslant1$ and $\varepsilon=1$,
$4\mu>1$ and $\varepsilon=-1$ or
$4\mu>1$ and $\varepsilon=1$, respectively.
Here $\mathcal Z_\kappa$ and $\mathcal C_\kappa$ are linear combination of Bessel functions and linear combination of modified Bessel functions, respectively,
$\mathcal Z_\kappa=C_1{\rm J}_\kappa(x)+C_2{\rm Y}_\kappa(x)$,
$\mathcal C_\kappa=C_1{\rm I}_\kappa(x)+C_2{\rm K}_\kappa(x)$.
The cylinder function $\tilde{\mathcal Z}_{|\kappa|}$ is a linear combination
of modifications of the Hankel functions ${\rm H}^{(1)}_\kappa$ and ${\rm H}^{(2)}_\kappa$,
\begin{gather*}
\tilde{\mathcal Z}_{|\kappa|}=C_1\tilde{\rm H}^{(1)}_\kappa(x)+C_2\tilde{\rm H}^{(2)}_\kappa(x)\\
\hphantom{\tilde{\mathcal Z}_{|\kappa|}}
=\frac{C_1}2\big({\rm e}^{-\frac12\kappa\pi}{\rm H}^{(1)}_\kappa(x)+{\rm e}^{\frac12\kappa\pi}{\rm H}^{(2)}_\kappa(x)\big)+
\frac{C_2}{2{\rm i}}\big({\rm e}^{-\frac12\kappa\pi}{\rm H}^{(1)}_\kappa(x)-{\rm e}^{\frac12\kappa\pi}{\rm H}^{(2)}_\kappa(x)\big),
\end{gather*}
and the cylinder function $\tilde{\mathcal C}_{|\kappa|}$ is a linear combination
of the modified Bessel function~${\rm K}_{\kappa}$ and the function~$\tilde {\rm I}_{\kappa}$,
which is a modification of the modified Bessel function~${\rm I}_{\kappa}$ (for $\kappa\ne0$),
\begin{gather*}
\tilde{\mathcal C}_{|\kappa|}=C_1\tilde{\rm I}_{\kappa}(x)+C_2{\rm K}_{\kappa}(x),\quad
\tilde{\rm I}_{\kappa}(x):=\frac{\pi{\rm i}}{2\sin(\kappa\pi)}\big({\rm I}_\kappa(x)+{\rm I}_{-\kappa}(x)\big).
\end{gather*}
Each of the functions $\mathcal Z_\kappa$, $\mathcal C_\kappa$, $\tilde{\mathcal Z}_{|\kappa|}$ or $\tilde{\mathcal C}_{|\kappa|}$
is real-valued and represents, for the corresponding values of the parameters~$\kappa$ and~$\varepsilon$,
the general solution of the equation $\varphi_{\omega\omega}+\mu\omega^{-2}\varphi-\varepsilon\varphi=0$,
which is obtained by the Lie reduction of~\eqref{eq:HeatEqSquarePot}
with respect to the subalgebra $\langle\mathcal P^t+\varepsilon\mathcal I\rangle$
using the ansatz $u={\rm e}^{\varepsilon t}\varphi(\omega)$ with $\omega=x$.

The Lie reduction of the equation~\eqref{eq:HeatEqSquarePot} with respect to the subalgebra $\mathfrak s_{1.1}^0$
leads to the ansatz $u=\varphi(\omega)$ with $\omega=x$ and the Euler equation $\omega^2\varphi_{\omega\omega}+\mu\varphi=0$
as the corresponding reduced equation.
We integrate this equation depending on the value of $\sgn(1-4\mu)\in\{-1,0,1\}$
and obtain the following solutions of~\eqref{eq:HeatEqSquarePot}:
\begin{gather*}
u=\sqrt{|x|}\big(C_1\cos(|\kappa|\ln|x|)+C_2\sin(|\kappa|\ln|x|)\big),\quad
u=\sqrt{|x|}\big(C_1+C_2\ln|x|\big),\\
u=\sqrt{|x|}(C_1|x|^\kappa+C_2|x|^{-\kappa}).
\end{gather*}

For each of the subalgebras $\mathfrak s_{1.2}^\nu$ and~$\mathfrak s_{1.3}^\nu$,
we first construct an associated ansatz for~$u$, derive the corresponding reduced equation,
map this equation to a canonical form, which turns out to be the Whittaker equation
\begin{gather}\label{eq:Whittaker}
\varphi_{\omega\omega}+\left(-\frac14+\frac a\omega+\frac{1/4-b^2}{\omega^2}\right)\varphi=0
\end{gather}
with certain $a$ and~$b$,
by a point transformation of the invariant independent and dependent variables,
and then use this transformation for modifying the ansatz.
The general solution of the above Whittaker equation is
the general linear combination of the Whittaker functions $W_{a,b}(z)$ and $M_{a,b}(z)$,
which are linearly independent and whose properties are comprehensively described, e.g., in~\cite{abra1970A}.

Thus, for the subalgebra~$\mathfrak s_{1.2}^\nu$ with a fixed $\nu\geqslant0$,
the used modified ansatz and the values of~$a$ and~$b$ are
\[
u=|t|^{\nu}|x|^{-\frac12}{\rm e}^{-\frac{x^2}{8t}}\varphi(\omega)\quad\mbox{with}\quad
\omega:=\frac{x^2}{4|t|},\quad a=-\varepsilon'\nu\quad\mbox{with}\quad\varepsilon':=\sgn t,\quad b=\kappa'.
\]
The representation of the corresponding solutions of~\eqref{eq:HeatEqSquarePot} over the real field
depends on the value of $\sgn(1-4\mu)$, either $\sgn(1-4\mu)\geqslant0$ or $\sgn(1-4\mu)<0$, and respectively is
\begin{gather*}
u=|t|^{\nu}|x|^{-\frac12}{\rm e}^{-\frac{x^2}{8t}}\left(
C_1M_{-\varepsilon'\nu,\,\kappa'}\left(\frac{x^2}{4|t|}\right)+
C_2W_{-\varepsilon'\nu,\,\kappa'}\left(\frac{x^2}{4|t|}\right)
\right),\\
u= |t|^{\nu}|x|^{-\frac12}{\rm e}^{-\frac{x^2}{8t}}{\rm Re}\left(
(C_1-{\rm i}C_2)W_{-\varepsilon'\nu,\,{\rm i}|\kappa'|}\left(\frac{x^2}{4|t|}\right)
\right).
\end{gather*}
We distinguish the case $\sgn(1-4\mu)<0$ since then the Whittaker function $W_{-\varepsilon'\nu,\kappa'}(\omega)$ is complex-valued.
However, its real and imaginary parts are linearly independent real solutions of the corresponding Whittaker equation.

For each of the subalgebras~$\mathfrak s_{1.3}^\nu$,
we use the modified ansatz
\[
u=|x|^{-\frac12}{\rm e}^{-\frac14\frac{x^2t}{t^2+1}+2\nu\arctan t}\varphi(\omega)\quad\mbox{with}\quad
\omega:=\frac{{\rm i}x^2}{2(t^2+1)}.
\]
The associated reduced equation is of the form~\eqref{eq:Whittaker} with $a={\rm i}\nu$ and $b=\kappa'$.
Hence the corresponding solutions of~\eqref{eq:HeatEqSquarePot} over the real field are
\begin{gather*}
u=|x|^{-\frac12}{\rm e}^{-\frac14\frac{x^2t}{t^2+1}+2\nu\arctan t}\mathop{\rm Re}
\left((C_1-{\rm i}C_2)W_{{\rm i}\nu,\,\kappa'}\left(\frac{{\rm i}x^2}{2(t^2+1)}\right)\right).
\end{gather*}
If $\kappa\in2\mathbb N_0+1$, these solutions can be represented in terms of regular and irregular Coulomb functions,
\[
u=|x|^{-\frac12}{\rm e}^{-\frac14\frac{x^2t}{t^2+1}+2\nu\arctan t}
\left(C_1 F_{\kappa'-\frac12}\left(\nu,\frac{x^2}{4(t^2+1)}\right)+C_2 G_{\kappa'-\frac12}\left(\nu,\frac{x^2}{4(t^2+1)}\right)\right).
\]

\section{On construction and generation of new exact solutions\\ using hidden symmetries}\label{sec:HiddenSyms}

Let $\mathfrak g$ and $G$ be the maximal Lie invariance algebra and the point symmetry (pseudo)group
of a system of differential equations~$\mathcal L$,
and let $\mathfrak s$ be a subalgebra of~$\mathfrak g$ that is appropriate for Lie reduction of the system~$\mathcal L$.
By $\hat{\mathcal L}$, $\hat{\mathfrak g}$ and~$\hat G$ we denote
a reduced system of~$\mathcal L$ with respect to a Lie reduction~$\rho$ associated with~$\mathfrak s$,
its maximal Lie invariance algebra and its point symmetry group, respectively.
The system~$\hat{\mathcal L}$ has the same number of the dependent variables as the system~$\mathcal L$,
and its number of the independent variables is that of~$\mathcal L$ reduced by $\dim\mathfrak s$.
Here~$\rho$ denotes an $\mathfrak s$-invariant map of maximal rank
from the space whose coordinates are the independent and dependent variables of~$\mathcal L$
to the analogous space for the system~$\hat{\mathcal L}$,
such that $\rho_*\mathcal L$ is equal to $\hat{\mathcal L}$ up to an nondegenerate differential-functional matrix multiplier.

Lie-symmetry vector fields of the system~$\mathcal L$
from the normalizer ${\rm N}_{\mathfrak g}(\mathfrak s)$ of the subalgebra~$\mathfrak s$ in the algebra~$\mathfrak g$
induce Lie-symmetry vector fields of the system~$\hat{\mathcal L}$,
which constitute a subalgebra~$\tilde{\mathfrak g}$ of~$\hat{\mathfrak g}$, $\tilde{\mathfrak g}=\rho_*{\rm N}_{\mathfrak g}(\mathfrak s)$.
Similarly, $\tilde G=\rho_*{\rm St}_G(\mathfrak s)$ is the group of induced point symmetries of the system~$\hat{\mathcal L}$.
Any element of $\hat{\mathfrak g}\setminus\tilde{\mathfrak g}$ is a genuine hidden Lie-symmetry vector field
of the original system~$\mathcal L$
with respect to the Lie reduction obtained with the subalgebra~$\mathfrak s$,
see~\cite[Example~3.5]{olve1993A}, \cite{abra2008a} and the discussion in~\cite[Remark~15]{poch2017a}.

Subalgebras~$\hat{\mathfrak s}$ of~$\hat{\mathfrak g}$ can used for further Lie reductions of the system~$\hat{\mathcal L}$.
The question is which subalgebras of~$\hat{\mathfrak g}$ may lead to solutions of~$\hat{\mathcal L}$,
whose pullbacks by~$\rho$ are \emph{hiddenly invariant solutions} of the system~$\mathcal L$,
i.e., they are not invariant with respect to subalgebras of~$\mathfrak g$ of greater dimensions than~$\dim\mathfrak s$.
It is obvious that this is definitely not the case for any subalgebra of~$\hat{\mathfrak g}$
each element of whose $\hat G$-orbit is contained in~$\tilde{\mathfrak g}$.
Moreover, if solutions $\hat f^1$ and $\hat f^2$ of the system~$\hat{\mathcal L}$ are $\tilde G$-equivalent,
then their counterparts $\rho^*\hat f^1$ and $\rho^*\hat f^2$ among solutions of the system~$\mathcal L$ are necessarily $G$-equivalent.
Taking into account these remarks,
we can formulate the following optimized procedure
for constructing hiddenly invariant solutions of~$\mathcal L$:
\begin{itemize}\itemsep=0ex
\item
Construct a complete list of $\hat G$-inequivalent subalgebras of the algebra~$\hat{\mathfrak g}$.
\item
From this list, exclude all such subalgebras~$\hat{\mathfrak s}$ that
any element of the $\hat G$-orbit of~$\hat{\mathfrak s}$ is contained in~$\tilde{\mathfrak g}$.
\item
For each of the remaining subalgebras in the list,
carry out, if possible, the Lie reduction of the system~$\hat{\mathcal L}$ with respect to it
and find the corresponding invariant solutions of~$\hat{\mathcal L}$.
\item
Extend the obtained solutions using a complete set of $\tilde G$-inequivalent transformations from~$\hat G$
under the left action of $\tilde G$ on~$\hat G$.
\item
Pull back by~$\rho$ the extended set of solutions.
\item
Make a final arrangement of the constructed set of solutions of~$\mathcal L$,
selecting, up to the $G$-equivalence, only those solutions
that are really hiddenly invariant.
\end{itemize}

\subsection*{Acknowledgements}

The authors are grateful to Vyacheslav Boyko, Yevhenii Azarov, Yevhen Chapovskyi, Dmytro Popovych and Galyna Popovych for valuable discussions.
The authors also sincerely thank the two reviewers for their helpful suggestions and comments,
which led to essentially improving the presentation of results.
This research was undertaken thanks to funding from the Canada Research Chairs program,
the InnovateNL LeverageR{\&}D program and the NSERC Discovery Grant program.
It was also supported in part by the Ministry of Education, Youth and Sports of the Czech Republic (M\v SMT \v CR)
under RVO funding for I\v C47813059.
ROP expresses his gratitude for the hospitality shown by the University of Vienna during his long staying at the university.
The authors express their deepest thanks to the Armed Forces of Ukraine and the civil Ukrainian people
for their bravery and courage in defense of peace and freedom in Europe and in the entire world from russism.



\end{document}